\documentclass[11pt,a4paper,leqno]{article}
\usepackage{a4wide}
\setlength{\textheight}{23cm}
\setlength{\textwidth}{16cm}
\usepackage{latexsym}
\usepackage{amsmath}
\usepackage{amssymb}
\usepackage{stackrel} 
\usepackage{color}
\usepackage{graphicx}

\newtheorem{defin}{Definition}
\newtheorem{lemma}{Lemma}
\newtheorem{prop}{Proposition}
\newtheorem{theo}{Theorem}
\newtheorem{corol}{Corollary}
\pagestyle{myheadings}
\newenvironment{proof}{\medskip\par\noindent{\bf Proof}}{\hfill $\Box$
\medskip\par}

\newcommand{\C}{\mathbb{C}}
\newcommand{\N}{\mathbb{N}}
\newcommand{\R}{\mathbb{R}}

\newcommand{\bnu}{\boldsymbol{\nu}}
\newcommand{\bk}{\boldsymbol{k}}
\newcommand{\bd}{\boldsymbol{d}}
\newcommand{\bT}{\boldsymbol{T}}
\newcommand{\bsigma}{\boldsymbol{\sigma}}
\newcommand{\bt}{\boldsymbol{t}}
\newcommand{\bu}{\boldsymbol{u}}
\newcommand{\btau}{\boldsymbol{\tau}}

\begin{document}
\title{On parametric Gevrey asymptotics for some initial value problems in two asymmetric complex time variables}
\author{{\bf A. Lastra\footnote{The author is partially supported by the project MTM2016-77642-C2-1-P of Ministerio de Econom\'ia y Competitividad, Spain}, S. Malek\footnote{The author is partially supported by the project MTM2016-77642-C2-1-P of Ministerio de Econom\'ia y Competitividad, Spain.}}\\
University of Alcal\'{a}, Departamento de F\'{i}sica y Matem\'{a}ticas,\\
Ap. de Correos 20, E-28871 Alcal\'{a} de Henares (Madrid), Spain,\\
University of Lille 1, Laboratoire Paul Painlev\'e,\\
59655 Villeneuve d'Ascq cedex, France,\\
{\tt alberto.lastra@uah.es}\\
{\tt Stephane.Malek@math.univ-lille1.fr }}
\date{}
\maketitle
\thispagestyle{empty}
{ \small \begin{center}
{\bf Abstract}
\end{center}

We study a family of nonlinear initial value partial differential equations in the complex domain under the action of two asymmetric time variables. Different Gevrey bounds and multisummability results are obtain depending on each element of the family, providing a more complete picture on the asymptotic behavior of the solutions of PDEs in the complex domain in several complex variables.

The main results lean on a fixed point argument in certain Banach space in the Borel plane, together with a Borel summability procedure and the action of different Ramis-Sibuya type theorems.

\medskip

\noindent Key words: asymptotic expansion, Borel-Laplace transform, Fourier transform, initial value problem, formal power series,
nonlinear integro-differential equation, nonlinear partial differential equation, singular perturbation. 2010 MSC: 35C10, 35C20.}
\bigskip \bigskip

\section{Introduction}

This work is framed into the study of multisummable formal solutions of certain family of PDEs. Multisummability of formal solutions of functional equations is observed in recent studies made by some research groups in different directions, and a growing interest has been observed in the scientific community. The present work belongs to these trends of studies, for which we provide a brief overview.\smallskip

Borel-Laplace summability procedures have been recently applied to solve partial differential equations. In the seminal work~\cite{lumisch}, the authors obtain positive results on the linear complex heat equation with constant coefficients. This construction was extended to more general linear PDEs by W. Balser in~\cite{ba3}, under the assumption of adequate extension of the initial data to an infinite sector. More recently, M. Hibino~\cite{hibino} has made some advances in the study of linear first order PDEs. Subsequently, several authors have studied complex heat like equations with variable coefficients (see~\cite{balo,copata,ly2}). The second author~\cite{ma1}, both authors~\cite{lama} and the two authors and J. Sanz~\cite{lamasa1} have also contributed in this theory.\smallskip 

Recently, multisummability of formal solutions of PDEs has also been put forward in different works. W. Balser~\cite{ba4} described a multisummability phenomenon in certain PDEs with constant coefficients. S. Ouchi~\cite{ou} constructed multisummable formal solutions of nonlinear PDEs, coming from perturbation of ordinary differential equations. H. Tahara and H. Yamazawa~\cite{taya} have made progresses on general linear PDEs with non constant coefficients under entire initial data. In~\cite{ly1}, G. Lysik constructs summable formal solutions of the one dimensional Burgers equation by means of the Cole-Hopf transform. O. Costin and S. Tanveer~\cite{cota2} construct summable formal power series in time variable to 3D Navier Stokes equations. The authors have obtained results in this direction~\cite{lama1,lama2}.\smallskip 

A recent overview on summability and multisummability techniques under different points of view is displayed in~\cite{loday}.\smallskip

The purpose of the present work is to study the solutions of a family of singularly perturbed partial differential equations from the asymptotic point of view. More precisely, we consider a problem of the form
\begin{multline}\label{e1}
Q(\partial_z)\partial_{t_2}u(t_1,t_2,z,\epsilon)=(P_1(\partial_z,\epsilon)u(t_1,t_2,z,\epsilon))(P_2(\partial_z,\epsilon)u(t_1,t_2,z,\epsilon))+ P(t_1,t_2,\epsilon,\partial_{t_1},\partial_{t_2},\partial_z)\\+f(t_1,t_2,z,\epsilon),
\end{multline}
under initial conditions $u(t_1,0,z,\epsilon)\equiv u(0,t_2,z,\epsilon)\equiv 0$, and where $Q(X)\in\C[X]$. The elements which conform the nonlinear part $P_1,P_2$ are polynomials in their second variable with coefficients being holomorphic functions defined on some neighborhood of the origin, say $D(0,\epsilon_0)$, continuous up to their boundary.\smallskip

Here, $D(0,\epsilon_0)$ stands for the open disc in the complex plane centered at 0, and with positive radius $\epsilon_0>0$. We write $\overline{D}(0,\epsilon_0)$ for its closure.\smallskip

Moreover, $P$ stands for some polynomial of six variables, with complex coefficients, and the forcing term $f(t_1,t_2,z,\epsilon)$ is a holomorphic and bounded function in $D(0,\rho)^2\times H_{\beta'}\times D(0,\epsilon_0)$, for some $\rho>0$, and where $H_{\beta'}$ stands for the horizontal strip
$$H_{\beta'}:=\{z\in\C:|\hbox{Im}(z)|<\beta'\},$$
for some $\beta'>0$.\smallskip

The precise configuration of the elements involved in the problem is stated and described in Section~\ref{seclayout}.\smallskip

This paper provides a step beyond in the study of the asymptotic behavior of the solutions of a subfamily of singularly perturbed partial differential equations of the form (\ref{e1}). We first recall some previous advances made in this respect, which motivate the present framework.\smallskip

In~\cite{lama}, we studied under the asymptotic point of view the solutions of certain family of PDEs of the form 
$$Q(\partial_z)\partial_tu(t,z,\epsilon)=(P_1(\partial_z,\epsilon)u(t,z,\epsilon))(P_2(\partial_z,\epsilon)u(t,z,\epsilon))+P(t,\epsilon,\partial_t,\partial_z)u(t,z,\epsilon)+f(t,z,\epsilon),$$
where the elements involved in the problem depend only on one time variable $t$. Our next aim was to check whether the asymptotic properties of the solutions in this equation can be extended to functions of more number of time variables, as stated in (\ref{e1}). \smallskip

It is worth mentioning that, in the previous work~\cite{lama}, the linear part of the equation, ruled by $P(t,\epsilon,\partial_t,\partial_z)u(t,z,\epsilon)$ was assumed to be more general than in the present configuration, admitting an additional term of the form $c_0(t,z,\epsilon)R(\partial_z)u(t,z,\epsilon)$, where $c_0(t,z,\epsilon)$ is given by a certain holomorphic function defined on a product $D(0,\rho)\times H_{\beta'}\times D(0,\epsilon_0)$. \smallskip

We decided not to incorporate this term in the present study for the sake of simplicity. However, the results can be written with no additional theoretical difficulties by adding the analog of such terms into the equation. As a matter of fact, the decision of not considering this term in the present work is due to emphasize other fact: an outstanding phenomena occurred when dealing with two complex variables, arriving at substantially and qualitatively different asymptotic properties of the solutions attained.\smallskip

In~\cite{family1}, we described a study of a family of equations of the shape (\ref{e1}) which showed a symmetric behaviour with respect to the asymptotic properties of the analytic solutions with respect to both time variables, as initially expected from the generalization of the one-time variable case. More precisely, we proved the following result: given a good covering of $\C^{\star}$, $\{ \mathcal{E}_{p_1,p_2} \}_{\begin{subarray}{l} 0 \leq p_1 \leq \varsigma_1 - 1\\0 \leq p_2 \leq \varsigma_2 - 1\end{subarray}}$ (see Definition~\ref{goodcovering}) involving sectors of opening larger than $\pi/k_2$, there exist sectors with vertex at the origin in $\C$ and finite radius, say $\mathcal{T}_1$ and $\mathcal{T}_2$, such that a family of solutions $\{ u_{p_1,p_2}(t_1,t_2,z,\epsilon) \}_{\begin{subarray}{l} 0 \leq p_1 \leq \varsigma_1 - 1\\0 \leq p_2 \leq \varsigma_2 - 1\end{subarray}}$ of (\ref{e1}) is constructed. The function $u_{p_1,p_2}(t_1,t_2,z,\epsilon)$ turns out to be holomorphic in $\mathcal{T}_1\times\mathcal{T}_2\times H_{\beta'}\times \mathcal{E}_{p_1,p_2}$, for every $0\le p_1\le \varsigma_1-1$ and $0\le p_2\le  \varsigma_2-1$. In addition to this, we obtain in this previous work that the difference of two consecutive (in the sense that they are related to consecutive sectors in the good covering) solutions $u_{p_1,p_2}$ and $u_{p'_1,p'_2}$, of (\ref{e1}) can be classified into two categories: 
\begin{enumerate}
\item Those pairs $((p_1,p_2),(p'_1,p'_2))\in\mathcal{U}_{k_1}$ such that 
$$\sup_{(t_1,t_2,z)\in \mathcal{T}_1\times\mathcal{T}_2\times H_{\beta'}}|u_{p_1,p_2}(t_1,t_2,z,\epsilon)-u_{p'_1,p'_2}(t_1,t_2,z,\epsilon)|\le K_{p}e^{-\frac{M_{p}}{|\epsilon|^{k_1}}},\quad \epsilon\in \mathcal{E}_{p_1,p_2}\cap\mathcal{E}_{p'_1,p'_2};$$
\item and those pairs $((p_1,p_2),(p'_1,p'_2))\in\mathcal{U}_{k_2}$ such that
$$\sup_{(t_1,t_2,z)\in \mathcal{T}_1\times\mathcal{T}_2\times H_{\beta'}}|u_{p_1,p_2}(t_1,t_2,z,\epsilon)-u_{p'_1,p'_2}(t_1,t_2,z,\epsilon)|\le K_{p}e^{-\frac{M_{p}}{|\epsilon|^{k_2}}},\quad \epsilon\in \mathcal{E}_{p_1,p_2}\cap\mathcal{E}_{p'_1,p'_2}.$$
\end{enumerate}
Here, $k_1$ and $k_2$ are different positive integers involved in the definition of the polynomials appearing in the main equation, and $K_p,M_p$ are positive constants.\smallskip 

The application of a two-level Ramis-Sibuya type result entails the existence of a formal power series $\hat{u}(t_1,t_2,z,\epsilon)\in\mathbb{F}[[\epsilon]]$, where $\mathbb{F}$ stands for the Banach space of holomorphic and bounded functions in the domain $\mathcal{T}_1\times\mathcal{T}_2\times H_{\beta'}$, with the supremum norm. Such formal power series is a formal solution of (\ref{e1}) and can be split in the form
$$\hat{u}(t_1,t_2,z,\epsilon)=a(t_1,t_2,z,\epsilon)+\hat{u}_1(t_1,t_2,z,\epsilon)+\hat{u}_2(t_1,t_2,z,\epsilon),$$
where $a(t_1,t_2,z,\epsilon)$ belongs to $\mathbb{F}\{\epsilon\}$, and $\hat{u}_1,\hat{u}_2\in\mathbb{F}[[\epsilon]]$. Moreover, for all $p_1\in\{0,\ldots,\varsigma_1-1\}$ and $p_2\in\{0,\ldots,\varsigma_2-1\}$, the function $u_{p_1,p_2}(t_1,t_2,z,\epsilon)$ can be split analogously:
$$u_{p_1,p_2}(t_1,t_2,z,\epsilon)=a(t_1,t_2,z,\epsilon)+u^1_{p_1,p_2}(t_1,t_2,z,\epsilon)+u^2_{p_1,p_2}(t_1,t_2,z,\epsilon),$$
where $\epsilon\mapsto u^j_{p_1,p_2}(t_1,t_2,z,\epsilon)$ is an $\mathbb{F}$-valued function which admits $\hat{u}_{j}(t_1,t_2,z,\epsilon)$ as its $k_j$-Gevrey asymptotic expansion on $\mathcal{E}_{p_1,p_2}$, for $j=1,2$, seeing $\hat{u}_{j}$ as a formal power series in $\epsilon$, with coefficients in $\mathbb{F}$. In addition to this, and under the assumption that $k_1<k_2$, a multisummability result is also attained. Under the assumption that 
$$\{((p_1^0,p_2^0),(p_1^1,p_2^1)), ((p_1^1,p_2^1),(p_1^2,p_2^2)),\ldots, ((p_1^{2y-1},p_2^{2y-1}),(p_1^{2y},p_2^{2y}))\}\subseteq\mathcal{U}_{k_2}$$
for some $y\in\N:=\{1,2,\ldots\}$, and
$$\mathcal{E}_{p_1^y,p_2^y}\subseteq S_{\pi/k_1}\subseteq\bigcup_{0\le j\le 2y}\mathcal{E}_{p_1^j,p_2^j},$$
for some sector $S_{\pi/k_1}$ with opening larger than $\pi/k_1$, then it holds that $\hat{u}(t_1m,t_2,z,\epsilon)$ is indeed $(k_2,k_1)-$summable on $\mathcal{E}_{p_1^y,p_2^y}$, being its $(k_2,k_1)$-sum given by $u_{p_1^y,p_2^y}$ on $\mathcal{E}_{p_1^y,p_2^y}$.

The role played by $k_1$ and $k_2$ in the previous framework is completely symmetric. The assumption $k_1<k_2$ is innocuous, reaching symmetric results in the case that $k_2<k_1$. In that study, the principal part of any of the equations in the family studied is factorisable as a product of two operators involving a single time variable, yielding a multisummability phenomena in the perturbation parameter $\epsilon$.\smallskip

On the other hand, in the present study, the sign of $k_1-k_2$ is crucial at the time of studying the asymptotic behavior of the analytic solution. In fact, a negative sign provides less information on the asymptotic behavior, which entails only Gevrey estimates whilst the positive one furnishes more precise information, namely multisummability. Here is where the strength of the present results holds. More precisely, we find a family of analytic solutions $\{ u_{p_1,p_2}(t_1,t_2,z,\epsilon) \}_{\begin{subarray}{l} 0 \leq p_1 \leq \varsigma_1 - 1\\0 \leq p_2 \leq \varsigma_2 - 1\end{subarray}}$ of the main problem under study, which are holomorphic in $\mathcal{T}_1\times\mathcal{T}_2\times H_{\beta'}\times \mathcal{E}_{p_1,p_2}$, and such that one of the following hold:
\begin{enumerate}
\item In case $k_2>k_1$, a formal power series $\hat{u}(t_1,t_2,z,\epsilon)\in\mathbb{F}[[\epsilon]]$, formal solution of (\ref{e1}), exists such that for every $(p_1,p_2)\in \{0,\ldots,\varsigma_1-1\}\times\{0,\ldots,\varsigma_2-1\}$, the function $u_{p_1,p_2}(t_1,t_2,z,\epsilon)$ admits $\hat{u}(t_1,t_2,z,\epsilon)$ as its asymptotic expansion of Gevrey order $1/k_1$ in $\mathcal{E}_{p_1,p_2}$ (see Theorem~\ref{teo2}).
\item In case that $k_1>k_2$, a formal power series $\hat{u}(t_1,t_2,z,\epsilon)\in\mathbb{F}[[\epsilon]]$ exists, being formal solution of (\ref{e1}), and such that $\hat{u}(t_1,t_2,z,\epsilon)$ shows analogous properties as those described in the family of equations in~\cite{family1}, i.e. multisummability of the formal solution with Gevrey levels $k_1$ and $k_2$ (see Theorem~\ref{teo3}).
\end{enumerate}

The present study is based on the following approach: after establishing the main problem under study:
\begin{multline}
\left(Q(\partial_{z})\partial_{t_2}+\epsilon^{\Delta_1}t_1^{d_1}\partial_{t_1}^{\delta_{D_1}}
\epsilon^{\tilde{\Delta}_2}t_2^{\tilde{d}_2}\partial_{t_2}^{\tilde{\delta}_{D_2}}R_{D_1,D_2}(\partial_z)+\epsilon^{\tilde{\Delta}_3}t_2^{\tilde{d}_3}\partial_{t_2}^{\tilde{\delta}_{D_3}}R_{D_3}(\partial_z)\right)u(\bt,z,\epsilon)\\
= (P_1(\partial_z,\epsilon)u(\bt,z,\epsilon))(P_2(\partial_z,\epsilon)u(\bt,z,\epsilon)) + \sum_{\stackrel{0\le l_j\le D_j-1}{j=1,2}} \epsilon^{\Delta_{l_1,l_2}}t_1^{d_{l_1}}t_2^{\tilde{d}_{l_2}}\partial_{t_1}^{\delta_{l_1}}\partial_{t_2}^{\tilde{\delta}_{l_2}}R_{l_1,l_2}(\partial_z)u(\bt,z,\epsilon)\\
+ f(\bt,z,\epsilon), \label{ICP_main00}
\end{multline}
where $k_1,k_2\ge1$, $D_1,D_2\ge 2$, $\Delta_1,d_1,\delta_{D_1},\tilde{\Delta}_{2},\tilde{d}_2,\tilde{\delta}_{D_2},\tilde{\Delta}_{3},\tilde{d}_3,\tilde{\delta}_{D_3}$ are integer numbers, and for all $0\le l_1\le D_1-1$ and $0\le l_2\le D_2-1$, we take nonnegative integers $d_{l_1},\tilde{d}_{l_2},\delta_{l_1},\tilde{d}_{l_2}$, and $\Delta_{l_1,l_2}$, under the assumptions (\ref{e120})-(\ref{e331}). Moreover, $Q, R_{D_1,D_2},R_{D_3}$ and $R_{l_1,l_2}$ are polynomials with complex coefficients, for all $0\le l_1\le D_1-1$ and $0\le l_2\le D_2-1$. The polynomials $P_1,P_2$ present coefficients which are holomorphic functions with respect the perturbation parameter on some neighborhood of the origin, under assumptions (\ref{raicesgrandes})-(\ref{e165b}). The forcing term $f(t_1,t_2,z,\epsilon)$ is given by some holomorphic and bounded function on a neighborhood of the origin with respect to both variables and the perturbation parameter $\epsilon$, and some horizontal strip with respect to $z$ variable.\smallskip

We search for analytic solutions of (\ref{ICP_main00}) given as a Laplace and Fourier transform of certain function to be determined:
\begin{equation}\label{e2}
u(t_1,t_2,z,\epsilon)=\frac{k_1k_2}{(2\pi)^{1/2}}\int_{-\infty}^{\infty}\int_{L_{d_1}}\int_{L_{d_2}}\omega_{\bk}^{\bd}(u_1,u_2,m,\epsilon)e^{-\left(\frac{u_1}{\epsilon t_1}\right)^{k_1}-\left(\frac{u_2}{\epsilon t_2}\right)^{k_2}}\frac{du_2}{u_2}\frac{du_1}{u_1},
\end{equation}
where $L_{\gamma_{j}}=\R_{+}e^{i\gamma_j}$, for some appropriate direction $\gamma_j\in\R$, for $j=1,2$. The problem of finding such a function is equivalent (in view of Lemma~\ref{lema257}) to solve an auxiliary convolution equation in the Borel plane. More precisely, there is a one-to-one correspondence between functions $u(t_1,t_2,z,\epsilon)$ of the form (\ref{e2}), which solve (\ref{ICP_main00}), and functions $\omega(\tau_1,\tau_2,m,\epsilon)$ admitting Laplace transform with respect to the first two variables along directions $d_1$ and $d_2$ resp., and Fourier transform with respect to $m$ variable, which turn out to be solutions of a convolution equation (see~\ref{e310}).\smallskip

For every fixed value of the perturbation parameter $\epsilon$,  $(\tau_1,\tau_2,m)\mapsto \omega_{\bk}^{\bd}(\tau_1,\tau_2,m,\epsilon)$ is obtained as the fixed point of the contractive operator $\mathcal{H}_\epsilon$ (see~(\ref{e623}) for its definition) acting on some Banach space of functions owing exponential decay at infinity on the Fourier variable, and defined on some neighborhood of the origin for $(\tau_1,\tau_2)$ in $\C^2$, which can be prolonged to some neighborhood of the origin together with an infinite sector of bisecting direction $d_1$ times an infinite sector with bisecting direction $d_2$; under certain concrete monomial exponential growth at infinity. More precisely, $\omega_{\bk}^{\bd}(\tau_1,\tau_2,m,\epsilon)$ is a continuous function in $(\overline{D}(0,\rho)\cup S_{d_1})\times S_{d_2}\times \R\times D(0,\epsilon_0)\setminus\{0\}$, and holomorphic with respect to $(\tau_1,\tau_2)$ in $(D(0,\rho)\cup S_{d_1})\times S_{d_2}$, and on $D(0,\epsilon_0)\setminus\{0\}$ with respect to the perturbation parameter. In addition to this, there exist constants $\varpi,\mu,\beta,\nu_1,\nu_2>0$ such that
$$|\omega_{\bk}^{\bd}(\tau_1,\tau_2,m,\epsilon)|\le\varpi (1+|m|)^{-\mu}\frac{\left|\frac{\tau_1}{\epsilon}\right|}{1+\left|\frac{\tau_1}{\epsilon}\right|^{2k_1}}\frac{\left|\frac{\tau_2}{\epsilon}\right|}{1+\left|\frac{\tau_2}{\epsilon}\right|^{2k_2}}\exp\left(-\beta|m|+\nu_1\left|\frac{\tau_1}{\epsilon}\right|^{k_1}+\nu_2\left|\frac{\tau_2}{\epsilon}\right|^{k_2}\right),$$
for every $(\tau_1,\tau_2,m,\epsilon)\in (\overline{D}(0,\rho)\cup S_{d_1})\times S_{d_2}\times \R\times D(0,\epsilon_0)\setminus\{0\}$. Laplace and Fourier transforms make sense in order to get (\ref{e2}). At this point, we are able to construct a family of solutions $\{ u_{p_1,p_2}(t_1,t_2,z,\epsilon) \}_{\begin{subarray}{l} 0 \leq p_1 \leq \varsigma_1 - 1\\0 \leq p_2 \leq \varsigma_2 - 1\end{subarray}}$ of (\ref{ICP_main00}), where $u_{p_1,p_2}(t_1,t_2,z,\epsilon)$ is a holomorphic function defined in $\mathcal{T}_1\times\mathcal{T}_2\times H_{\beta'}\times \mathcal{E}_{p_1,p_2}$, with $\mathcal{T}_1$ and $\mathcal{T}_2$ being finite sectors in $\C^{\star}$ with vertex at the origin, and where $\{ \mathcal{E}_{p_1,p_2} \}_{\begin{subarray}{l} 0 \leq p_1 \leq \varsigma_1 - 1\\0 \leq p_2 \leq \varsigma_2 - 1\end{subarray}}$ conforms a good covering at 0 (see Definition~\ref{goodcovering}).\smallskip

The distinction of $k_1<k_2$ and $k_2<k_1$ provide Gevrey asymptotics or multisummability results in Theorem~\ref{teo2}, resp. Theorem~\ref{teo3}. It is worth mentioning that these results lean on the application of a cohomological criteria known as Ramis-Sibuya Theorem; resp. a multilevel version of such result. \smallskip

The fact that a different behavior can be observed with respect to the variables in time is due to the domain of definition of $\omega_{\bk}^{\bd}$ with respect to such variables: a neighborhood of the origin for $(\tau_1,\tau_2)\in\C^2$ which can only be prolonged up to a neighborhood of the origin together with an infinite sector with respect to the first variable; whereas it can not be defined on any neighborhood of the origin with respect to the second time variable, but it does on some infinite sector. This causes the impossibility of application of a deformation path at the time of estimating the difference of two consecutive solutions in order to apply the multilevel version Ramis-Sibuya Theorem. With respect to the study of the main problem in~\cite{family1}, the main difficulty at this point comes due to the fact that Case 1 in Theorem 1 of~\cite{family1} is no longer available.\smallskip

We also find it necessary to justify the fact that $\omega_{\bk}^{\bd}$ can not be defined with respect to $(\tau_1,\tau_2)$ in sets of the form 
\begin{equation}\label{e145}
S_1\times (S_2\cup D(0,\rho_2)),
\end{equation}
for some infinite sectors $S_1$ and $S_2$, and for some $\rho_2>0$. In order to solve the main equation, one needs to divide by $P_m(\tau_1,\tau_2)$ (see (\ref{e326}) for its definition). However, as stated in Section~\ref{secnodef}, the roots of such polynomial lie on sets of the form (\ref{e145}), for any $\rho_2>0$. Therefore, a small divisor phenomena is observed , which does not allow a summability procedure. This occurrence has already been noticed in another context in previous works: in the framework of $q-$difference-differential equations~\cite{lama3}; in the context of multilevel Gevrey solutions of PDEs in the complex domain in~\cite{lama2}, etc.\smallskip

The layout of the paper is as follows.\smallskip

After recalling the definition and the action of Fourier transform in the first part of Section~\ref{sec2}, we describe the main problem under study in Section~\ref{seclayout}, and reduce it to the research of a solution of an auxiliary convolution equation. Such solution is obtained following a fixed point argument in appropriate Banach spaces (see Section~\ref{fixed}), whose main properties are provided in Section~\ref{subsecespbanach}. 

Section~\ref{secnodef} is devoted to motivate the domain of definition of the solution, in contrast to that studied in~\cite{family1}. 

The first main result of our work is Theorem~\ref{teo1}, where the existence of a family of analytic solutions of the main problem is obtained. In Section~\ref{secborellaplace} we recall the Borel summability procedure and two cohomological criteria: Ramis-Sibuya Theorem, and a multilevel version of Ramis-Sibuya Theorem. We conclude the present work with the existence of a formal solution to the problem, and two asymptotic results which connect the formal and the analytic solutions: Theorem~\ref{teo2} states a result on Gevrey asymtotics in a subfamily of equations; Theorem~\ref{teo3} states a multisummability result in another different subfamily of equations under study.

\section{Layout of the main and auxiliary problems}\label{sec2}

This section is devoted to describe the main problem under study. We first recall some facts on the action of Fourier transform on certain Banach spaces of functions.

\subsection{Fourier transform on exponentially decreasing function spaces}

In order to transform the main problem under study into an auxiliary one, easier to handle, we first describe the action of Fourier transform in certain Banach spaces of rapidly decreasing functions.  

\begin{defin} Let $\beta, \mu \in \mathbb{R}$. $E_{(\beta,\mu)}$ stands for the vector space of continuous functions $h : \mathbb{R} \rightarrow \mathbb{C}$ such that
$$ \left\|h(m)\right\|_{(\beta,\mu)} = \sup_{m \in \mathbb{R}} (1+|m|)^{\mu} \exp( \beta |m|) |h(m)| $$
is finite. $E_{(\beta,\mu)}$ turns out to be a Banach space when endowed with the norm $\left\|.\right\|_{(\beta,\mu)}$.
\end{defin}

The following result is stated without proof, which can be found in~\cite{lama}, Proposition 7.

\begin{prop}\label{prop359b}
Let $f \in E_{(\beta,\mu)}$ with $\beta > 0$, $\mu > 1$. The inverse Fourier transform of $f$
$$ \mathcal{F}^{-1}(f)(x) = \frac{1}{ (2\pi)^{1/2} } \int_{-\infty}^{+\infty} f(m) \exp( ixm ) dm,\quad x\in\mathbb{R},$$
can be extended to an analytic function on the strip
$$H_{\beta} := \{ z \in \mathbb{C} / |\mathrm{Im}(z)| < \beta \}.
$$
Let $\phi(m) = im f(m) \in E_{(\beta,\mu - 1)}$. Then, it holds that $ \partial_{z} \mathcal{F}^{-1}(f)(z) = \mathcal{F}^{-1}(\phi)(z)$, for $z \in H_{\beta}$.

Let $g \in E_{(\beta,\mu)}$ and put $\psi(m) = \frac{1}{(2\pi)^{1/2}} f \ast g(m)$, the convolution product of $f$ and $g$, for all $m \in \mathbb{R}$. $\psi$ belongs to  $E_{(\beta,\mu)}$. Moreover, we have $\mathcal{F}^{-1}(f)(z)\mathcal{F}^{-1}(g)(z) = \mathcal{F}^{-1}(\psi)(z)$, for  $z\in H_{\beta}$.
\end{prop}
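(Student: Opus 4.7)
The plan is to establish the three claims of Proposition~\ref{prop359b} in the order they are stated, each by direct estimation against the weighted supremum norm of $E_{(\beta,\mu)}$. For the analytic extension, I would write $z = x + iy \in H_\beta$ and note the pointwise bound $|f(m)e^{izm}| \le \|f\|_{(\beta,\mu)}(1+|m|)^{-\mu}e^{-\beta|m|-ym}$. Since the exponent is at most $-(\beta-|y|)|m|$, on any compact subset $K\subset H_\beta$ there is a fixed integrable majorant $\|f\|_{(\beta,\mu)}(1+|m|)^{-\mu}e^{-(\beta-\beta')|m|}$ with $\beta':=\sup_{z\in K}|\mathrm{Im}(z)|<\beta$. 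Because $z\mapsto f(m)e^{izm}$ is entire for each $m$, holomorphy of $\mathcal{F}^{-1}(f)$ on $H_\beta$ follows from the classical holomorphy-under-the-integral-sign theorem.

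The derivative identity reduces to two observations. That $\phi(m)=imf(m)\in E_{(\beta,\mu-1)}$ follows from $(1+|m|)^{\mu-1}e^{\beta|m|}|m||f(m)| \le (1+|m|)^{\mu}e^{\beta|m|}|f(m)| \le \|f\|_{(\beta,\mu)}$ via $|m|/(1+|m|)\le 1$. Differentiating $f(m)e^{izm}$ in $z$ produces $imf(m)e^{izm}$, bounded by $|m|$ times the previous majorant, still integrable on $K$; so the interchange of $\partial_z$ and $\int$ is legitimate and gives $\partial_z \mathcal{F}^{-1}(f)(z) = \mathcal{F}^{-1}(\phi)(z)$.

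For the convolution part, I would first show $\psi\in E_{(\beta,\mu)}$. Inserting the norm bounds into $|\psi(m)| \le \frac{1}{\sqrt{2\pi}}\int |f(m-m')||g(m')|\,dm'$ and using $|m|\le |m-m'|+|m'|$ to replace $e^{\beta|m|}$ by $e^{\beta|m-m'|}e^{\beta|m'|}$, the exponential factors cancel and one is left with
$$ (1+|m|)^{\mu} e^{\beta|m|}|\psi(m)| \le \frac{\|f\|_{(\beta,\mu)}\|g\|_{(\beta,\mu)}}{\sqrt{2\pi}}\int_{-\infty}^{+\infty} \frac{(1+|m|)^{\mu}}{(1+|m-m'|)^{\mu}(1+|m'|)^{\mu}}\,dm'. $$
I would then split $\mathbb{R}$ along $\{|m'|\ge|m|/2\}$ and $\{|m'|<|m|/2\}$: on the first region $(1+|m|)^{\mu}\le 2^{\mu}(1+|m'|)^{\mu}$ kills the factor $(1+|m'|)^{-\mu}$, while on the second $|m-m'|\ge|m|/2$ gives $(1+|m|)^{\mu}\le 2^{\mu}(1+|m-m'|)^{\mu}$, doing the symmetric job. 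Either way the remaining integrand is of the form $(1+|u|)^{-\mu}$, integrable precisely because $\mu>1$.

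For the multiplicative identity on $H_\beta$, I would multiply the two integral representations of $\mathcal{F}^{-1}(f)(z)$ and $\mathcal{F}^{-1}(g)(z)$, apply Fubini (absolute convergence follows from the exponential decay established for the analytic extension, valid throughout $H_\beta$), change variables $m=m_1+m_2$ with $m_1$ retained, and recognize the inner integral as $\sqrt{2\pi}\,\psi(m)$, producing $\mathcal{F}^{-1}(\psi)(z)$. The main obstacle I foresee is the convolution estimate---getting the two-region split right and seeing where the assumption $\mu>1$ genuinely enters. The analytic extension and derivative parts are routine dominated-convergence arguments.
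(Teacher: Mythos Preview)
Your proof is correct and follows the standard route: dominated convergence for the analytic extension and differentiation under the integral sign, the two-region split $\{|m'|\ge |m|/2\}\cup\{|m'|<|m|/2\}$ for the convolution bound (this is exactly where $\mu>1$ is needed), and Fubini plus the substitution $m=m_1+m_2$ for the multiplicative identity. Note that the paper does not actually prove this proposition; it is stated without proof and referred to \cite{lama}, Proposition~7, so there is no in-paper argument to compare against. Your write-up is a faithful reconstruction of the expected proof and would serve as a self-contained replacement for the citation.
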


\subsection{Layout of the main problem}\label{seclayout}

Let $k_1,k_2 \geq 1$ and $D_1,D_2 \geq 2$ be integer numbers. We also consider non negative integer numbers $d_1,\tilde{d}_j,\Delta_1,\tilde{\Delta}_j,\delta_{D_1},\tilde{\delta}_{D_j}$, for $j\in\{2,3\}$. For all $0\le l_1\le D_1-1$ and $0\le l_2\le D_2-1$, let $d_{l_1},\tilde{d}_{l_2},\delta_{l_1},\tilde{d}_{l_2},\Delta_{l_1,l_2}$ be non negative integers.
We assume the previous elements satisfy the next identities: 
\begin{equation}\label{e120}
\frac{2}{k_2}<\tilde{\delta}_{D_2}\le \tilde{\delta}_{D_3} 
\end{equation}
and $\delta_{l_1} < \delta_{l_1+1}$, $\tilde{\delta}_{l_2} < \tilde{\delta}_{l_2+1}$ 
for all $0 \leq l_1 \leq D_1-1$ and $0 \leq l_2 \leq D_2-1$,
\begin{multline}
\Delta_1+\tilde{\Delta}_2-d_1-\tilde{d}_2-1+\delta_{D_1}+\tilde{\delta}_{D_2}=0\qquad \tilde{\Delta}_3-\tilde{d}_3+\tilde{\delta}_{D_3}-1=0\\
d_1=\delta_{D_1}(k_1+1),\qquad k_2+1+\tilde{d}_j=\tilde{\delta}_{D_j}(k_2+1)\quad (j=2,3)
\label{e232}
\end{multline}
Moreover, for every $0\le l_1\le D_1-1$ and $0\le l_2\le D_2-1$, we assume
\begin{equation}\label{e331}
d_{l_1}>\delta_{l_1}(k_1+1),\qquad \tilde{d}_{l_2}>(\tilde{\delta}_{l_2}-1)(k_2+1),
\end{equation}
$$ \Delta_{l_1,l_2}>\delta_{D_1}k_1+(\tilde{\delta}_{D_2}-1)k_2.$$

Let $Q(X), R_{D_1,D_2},R_{D_3}\in\C[X]$, and for $0\le l_1\le D_1-1$ and $0\le l_2\le D_2-1$ we take $R_{l_1,l_2}(X)\in\C[X]$. We consider polynomials $P_1,P_2$ with coefficients belonging to $\mathcal{O}(\overline{D}(0,\epsilon_0))$, such that
\begin{equation}\label{raicesgrandes}
\hbox{deg}(Q)\ge \hbox{deg}(R_{l_1,l_2}),
\end{equation}
for $0\le l_1\le D_1-1$ and $0\le l_2\le D_2-1$. Moreover, we choose these polynomials satisfying
\begin{equation}
\mathrm{deg}(Q) \geq \mathrm{deg}(P_{j}),\quad j=1,2\qquad Q(im)\neq0,\quad m\in\R,\label{assum_deg_Q_R}
\end{equation}
$$\mathrm{deg}(Q)= \mathrm{deg}(R_{D_3})= \mathrm{deg}(R_{D_1,D_2}).$$
More precisely, we assume there exist sectorial annulus $E_{D_3,Q}$ and $E_{D_1,D_2,D_3}$ such that \begin{equation}\label{e165b}
\frac{R_{D_3}(im)}{Q(im)}\in E_{D_3,Q}, \qquad \frac{R_{D_1,D_2}(im)}{Q(im)}\in E_{D_1,D_2,D_3}, 
\end{equation}
for every $m\in\R$. In other words, there exist real numbers $0<r_{j}<R_j$ and $\alpha_j<\beta_j$, for $j=1,2,$ such that
\begin{multline}
E_{D_3,Q}:=\{x\in\C:r_1<|x|<R_1, \hbox{arg}(x)\in(\alpha_1,\beta_1)\},\\
E_{D_1,D_2,Q}:=\{x\in\C:r_2<|x|<R_2, \hbox{arg}(x)\in(\alpha_2,\beta_2)\}.\label{eannulus}
\end{multline}

Throughout the whole work, we denote the pairs of variables in bold letters: $\bt:=(t_1,t_2)$, $\bT:=(T_1,T_2)$, $\btau:=(\tau_1,\tau_2)$, etc.

We consider the following nonlinear initial value problem
\begin{multline}
\left(Q(\partial_{z})\partial_{t_2}+\epsilon^{\Delta_1}t_1^{d_1}\partial_{t_1}^{\delta_{D_1}}
\epsilon^{\tilde{\Delta}_2}t_2^{\tilde{d}_2}\partial_{t_2}^{\tilde{\delta}_{D_2}}R_{D_1,D_2}(\partial_z)+\epsilon^{\tilde{\Delta}_3}t_2^{\tilde{d}_3}\partial_{t_2}^{\tilde{\delta}_{D_3}}R_{D_3}(\partial_z)\right)u(\bt,z,\epsilon)\\
= (P_1(\partial_z,\epsilon)u(\bt,z,\epsilon))(P_2(\partial_z,\epsilon)u(\bt,z,\epsilon)) + \sum_{\stackrel{0\le l_j\le D_j-1}{j=1,2}} \epsilon^{\Delta_{l_1,l_2}}t_1^{d_{l_1}}t_2^{\tilde{d}_{l_2}}\partial_{t_1}^{\delta_{l_1}}\partial_{t_2}^{\tilde{\delta}_{l_2}}R_{l_1,l_2}(\partial_z)u(\bt,z,\epsilon)\\
+ f(\bt,z,\epsilon) \label{ICP_main0}
\end{multline}
with null initial data $u(t_1,0,z,\epsilon)\equiv u(0,t_2,z,\epsilon) \equiv 0$.\smallskip

The forcing term $f(\bt,z,\epsilon)$ is constructed as follows. For $n_1,n_2 \geq 1$, let $m \mapsto F_{n_1,n_2}(m,\epsilon)$  be a family of functions belonging to the Banach space
$E_{(\beta,\mu)}$ for some $\beta > 0$, $\mu > \max( \mathrm{deg}(P_{1})+1, \mathrm{deg}(P_{2})+1)$ and which
depend holomorphically on $\epsilon \in D(0,\epsilon_{0})$. We assume there exist constants $K_{0},T_{0}>0$
such that 
\begin{equation}\label{e165}
\left\|F_{n_1,n_2}(m,\epsilon)\right\|_{(\beta,\mu)} \leq K_{0} (\frac{1}{T_{0}})^{n_1+n_2},
\end{equation}
for all $n_1,n_2 \geq 1$, and $\epsilon \in D(0,\epsilon_{0})$. We deduce that
$$\mathbf{F}(\bT,z,\epsilon) = \sum_{n_1,n_2 \geq 1} \mathcal{F}^{-1}(m \mapsto F_{n_1,n_2}(m,\epsilon))(z) T_1^{n_1}T_2^{n_2} $$
represents a bounded and holomorphic function on $D(0,T_{0}/2)^2 \times H_{\beta'} \times D(0,\epsilon_{0})$ for any $0 < \beta' < \beta$. We define
\begin{equation}
f(\bt,z,\epsilon) = \mathbf{F}(\epsilon t_1,\epsilon t_2 , z,\epsilon).
\label{defin_c_0_f}
\end{equation}
Observe the function $f$ is holomorphic and bounded on $D(0,\rho)^2 \times H_{\beta'} \times D(0,\epsilon_{0})$ where $\rho \epsilon_{0} < T_{0}/2$.

We search for solutions of the main problem (\ref{ICP_main0}), which are time scaled and expressed as a Fourier transform with respect to $z$ variable, in the form 
$$u(\bt,z,\epsilon)=\mathbb{U}(\epsilon t_1,\epsilon t_2,z,\epsilon)=\frac{1}{(2\pi)^{1/2}}\int_{-\infty}^{\infty}U(\epsilon t_1,\epsilon t_2,m,\epsilon)\exp(izm)dm.$$  
The symbol $U(\bT,m,\epsilon)$ satisfies the next equation.

\begin{multline}
\left(Q(im)\epsilon\partial_{T_2}+ \epsilon^{\Delta_1+\tilde{\Delta}_2-d_1-\tilde{d}_2}T_1^{d_1}T_2^{\tilde{d}_2}\epsilon^{\delta_{D_1}+\tilde{\delta}_{D_2}}\partial_{T_1}^{\delta_{D_1}}\partial_{T_2}^{\tilde{\delta}_{D_2}}R_{D_1,D_2}(im)\right.\\
\left.+\epsilon^{\tilde{\Delta}_3-\tilde{d}_3}T_2^{\tilde{d}_3}\epsilon^{\tilde{\delta}_{D_3}}\partial_{T_2}^{\tilde{\delta}_{D_3}}R_{D_3}(im)\right)U(\bT,m,\epsilon)\\
=\frac{1}{(2\pi)^{1/2}}\int_{-\infty}^{\infty}(P_1(i(m-m_1),\epsilon)U(\bT,m-m_1,\epsilon))(P_2(im_1,\epsilon)U(\bT,m_1,\epsilon))dm_1\\
+\sum_{\stackrel{0\le l_j\le D_j-1}{j=1,2}} \epsilon^{\Delta_{l_1,l_2}-d_{l_1}-\tilde{d}_{l_2}+\delta_{l_1}+\tilde{\delta}_{l_2}}T_1^{d_{l_1}}T_2^{\tilde{d}_{l_2}}\partial_{T_1}^{\delta_{l_1}}\partial_{T_2}^{\tilde{\delta}_{l_2}}R_{l_1,l_2}(im)U(\bT,m,\epsilon)\\
+ \mathcal{F}(z\mapsto \mathbf{F}(\bT,z,\epsilon))(m)\label{e32b}
\end{multline}

Our goal is to provide solutions of (\ref{e32b}) in the form of a Laplace transform. Namely, we search for solutions of the form
\begin{equation}\label{e215}
U(\bT,m, \epsilon)=k_1k_2 \int_{L_{\gamma_1}}\int_{L_{\gamma_2}} \omega_{\bk}^{\bd}(u_1,u_2,m,\epsilon) e^{-(\frac{u_1}{T_1})^{k_1}-(\frac{u_2}{T_2})^{k_2}} \frac{du_2}{u_2}\frac{du_1}{u_1},
\end{equation}
where $L_{\gamma_j}=\mathbb{R}_{+}e^{i\gamma_j}$, for some appropriate direction $\gamma_j\in\R$, for $j=1,2,$ which depend on $T_j$. The function $\omega_{\bk}^{\bd}(\btau,m,\epsilon)$ is constructed in the incoming sections as the fixed point of a map defined in certain Banach spaces, studied in the forthcoming sections. For $j=1,2,$ let $S_{d_j}$ be infinite sectors with vertex at the origin and bisecting direction $d_j$, such that $L_{\gamma_j}\subseteq S_{d_j}$. We fix a positive real number $\rho>0$.

In the present section, we depart from a function $\omega_{\bk}^{\bd}(\btau,m,\epsilon)$ continuous on $(\overline{D}(0,\rho)\cup S_{d_1})\times S_{d_2}\times \R\times D(0,\epsilon_0)\setminus\{0\}$, holomorphic with respect to $(\btau,\epsilon)$ in $(D(0,\rho)\cup S_{d_1})\times S_{d_2}\times D(0,\epsilon_0)$, and such that
\begin{equation}\label{e209}
|\omega_{\bk}^{\bd}(\btau,m,\epsilon)| \leq \varpi_{\bd}(1+ |m|)^{-\mu} e^{-\beta|m|}
\frac{ |\frac{\tau_1}{\epsilon}|}{1 + |\frac{\tau_1}{\epsilon}|^{2k_1}}\frac{ |\frac{\tau_2}{\epsilon}|}{1 + |\frac{\tau_2}{\epsilon}|^{2k_2}} \exp( \nu_1 |\frac{\tau_1}{\epsilon}|^{k_1}+\nu_2 |\frac{\tau_2}{\epsilon}|^{k_2})
\end{equation}
for all $\btau\in (\overline{D}(0,\rho)\cup S_{d_1})\times S_{d_2}$, every $m\in\R$ and $\epsilon\in D(0,\epsilon_0)\setminus\{0\}$.

In order to construct the solution, we present a refined form of the problem. For that purpose, we need some preliminary results. We make use of the following relations, which can be found in~\cite{taya}, p. 40:
\begin{multline}
T_1^{\delta_{D_1}(k_1+1)} \partial_{T_1}^{\delta_{D_1}} = (T_1^{k_1+1}\partial_{T_1})^{\delta_{D_1}} +
\sum_{1 \leq p_1 \leq \delta_{D_1}-1} A_{\delta_{D_1},p_1} T_1^{k_1(\delta_{D_1}-p_1)} (T_1^{k_1+1}\partial_{T_1})^{p_1}\\
=(T_1^{k_1+1}\partial_{T_1})^{\delta_{D_1}}+A_{\delta_{D_1}}(T_1,\partial_{T_1})
 \label{expand_op_diff}
\end{multline}
\begin{multline}
T_2^{\tilde{\delta}_{D_j}(k_2+1)} \partial_{T_2}^{\tilde{\delta}_{D_j}} = (T_2^{k_2+1}\partial_{T_2})^{\tilde{\delta}_{D_j}} +
\sum_{1 \leq p_j \leq \tilde{\delta}_{D_j}-1} \tilde{A}_{\tilde{\delta}_{D_j},p_j} T_2^{k_2(\tilde{\delta}_{D_j}-p_j)} (T_2^{k_2+1}\partial_{T_2})^{p_j}\\
=(T_2^{k_2+1}\partial_{T_2})^{\tilde{\delta}_{D_j}}+\tilde{A}_{\tilde{\delta}_{D_j}}(T_2,\partial_{T_2}) \label{expand_op_diff2}
\end{multline}
for some real numbers $A_{\delta_{D_1},p_1}$, $p_1=1,\ldots,\delta_{D_1}-1$ and $\tilde{A}_{\tilde{\delta}_{D_j},p_j}$, $p_j=1,\ldots,\tilde{\delta}_{D_j}-1$, for $j=2,3$. We write $A_{D_1}$ (resp. $\tilde{A}_{D_j}$, for $j=2,3$,) in the place of $A_{\delta_{D_1}}$ (resp. $\tilde{A}_{\tilde{\delta}_{D_j}}$) for the sake of simplicity.

We divide by $\epsilon$ and multiply by $T_2^{k_2+1}$ at both sides of (\ref{e32b}). Under the assumptions displayed in (\ref{e232}) one may apply (\ref{expand_op_diff}) and (\ref{expand_op_diff2}) in order to rewrite equation (\ref{e32b}). This step is important to exhibit the equations as an expression where some operators algebraically well-behaved with respect to Laplace transform appear. The resulting equation is as follows:
\begin{multline}
\left(Q(im)T_2^{k_2+1}\partial_{T_2}+ (T_1^{k_1+1}\partial_{T_1})^{\delta_{D_1}}(T_2^{k_2+1}\partial_{T_2})^{\tilde{\delta}_{D_2}} R_{D_1,D_2}(im)+(T_2^{k_2+1}\partial_{T_2})^{\tilde{\delta}_{D_3}}R_{D_3}(im)\right)U(\bT,m,\epsilon)\\
=\left[-(T_1^{k_1+1}\partial_{T_1})^{\delta_{D_1}}\tilde{A}_{D_2}(T_2,\partial_{T_2})R_{D_1,D_2}(im)-(T_2^{k_2+1}\partial_{T_2})^{\tilde{\delta}_{D_2}}A_{D_1}(T_1,\partial_{T_1})R_{D_1,D_2}(im)\right.\\
\left.-A_{D_1}(T_1,\partial_{T_1})\tilde{A}_{D_2}(T_2,\partial_{T_2})R_{D_1,D_2}(im)- \tilde{A}_{D_3}(T_2,\partial_{T_2})R_{D_3}(im)\right] U(\bT,m,\epsilon)\\
+\frac{T_2^{k_2+1}\epsilon^{-1}}{(2\pi)^{1/2}}\int_{-\infty}^{\infty}(P_1(i(m-m_1),\epsilon)U(\bT,m-m_1,\epsilon))(P_2(im_1,\epsilon)U(\bT,m_1,\epsilon))dm_1\\
+\sum_{\stackrel{0\le l_j\le D_j-1}{j=1,2}} \epsilon^{\Delta_{l_1,l_2}-d_{l_1}-\tilde{d}_{l_2}+\delta_{l_1}+\tilde{\delta}_{l_2}-1}T_1^{d_{l_1}}T_2^{\tilde{d}_{l_2}}\partial_{T_1}^{\delta_{l_1}}\partial_{T_2}^{\tilde{\delta}_{l_2}}R_{l_1,l_2}(im)U(\bT,m,\epsilon)\\
+ T_2^{k_2+1}\epsilon^{-1}\mathcal{F}(z\mapsto \mathbf{F}(\bT,z,\epsilon))(m).\label{e32c}
\end{multline}

The following result allows to establish a one-to-one correspondence between solutions of equation (\ref{e32c}), and an auxiliary equation in the Borel plane, (\ref{e310}). The last equation will be presented afterwards, in this same section. 

\begin{lemma}\label{lema257}
Let $U(\bT,m,\epsilon)$ be the function constructed in (\ref{e215}). Then, the following statements hold:
$$T_j^{k_j+1}\partial_{T_j}U(\bT,m,\epsilon)=k_1k_2\int_{L_{\gamma_1}}\int_{L_{\gamma_2}}(k_ju_j^{k_j})\omega_{\bk}^{\bd}(\bu,m,\epsilon)e^{-\left(\frac{u_1}{T_1}\right)^{k_1}-\left(\frac{u_2}{T_2}\right)^{k_2}}\frac{du_2}{u_2}\frac{du_1}{u_1},\quad j=1,2.$$

\begin{multline*}T_1^{m_1}U(\bT,m,\epsilon)=k_1k_2\int_{L_{\gamma_1}}\int_{L_{\gamma_2}}\left(\frac{u_1^{k_1}}{\Gamma\left(\frac{m_1}{k_1}\right)}\int_0^{u_1^{k_1}}(u_1^{k_1}-s_1)^{\frac{m_1}{k_1}-1}\omega_{\bk}^{\bd}(s_1^{1/k_1},u_2,m,\epsilon)\frac{ds_1}{s_1}\right)\\
\times e^{-\left(\frac{u_1}{T_1}\right)^{k_1}-\left(\frac{u_2}{T_2}\right)^{k_2}}\frac{du_2}{u_2}\frac{du_1}{u_1},\qquad m_1\in\N.
\end{multline*}

\begin{multline*}
T_2^{m_2}U(\bT,m,\epsilon)=k_1k_2\int_{L_{\gamma_1}}\int_{L_{\gamma_2}}\left(\frac{u_2^{k_2}}{\Gamma\left(\frac{m_2}{k_2}\right)}\int_0^{u_2^{k_2}}(u_2^{k_2}-s_2)^{\frac{m_2}{k_2}-1}\omega_{\bk}^{\bd}(u_1,s_2^{1/k_2},m,\epsilon)\frac{ds_2}{s_2}\right)\\
\times e^{-\left(\frac{u_1}{T_1}\right)^{k_1}-\left(\frac{u_2}{T_2}\right)^{k_2}}\frac{du_2}{u_2}\frac{du_1}{u_1},\qquad m_2\in\N.
\end{multline*}

\begin{multline*}
\int_{-\infty}^{\infty}U(\bT,m-m_1,\epsilon)U(\bT,m_1,\epsilon)dm_1\\
=k_1k_2\int_{L_{\gamma_1}}\int_{L_{\gamma_2}}\left( u_1^{k_1}u_2^{k_2}\int_{-\infty}^{\infty}\int_{0}^{u_1^{k_1}}\int_0^{u_2^{k_2}}\omega_{\bk}^{\bd}((u_1^{k_1}-s_1)^{\frac{1}{k_1}},(u_2^{k_2}-s_2)^{\frac{1}{k_2}},m-m_1,\epsilon)\right.\\
\left.\omega_{\bk}^{\bd}(s_1^{\frac{1}{k_1}},s_2^{\frac{1}{k_2}},m_1,\epsilon)\frac{1}{(u_1^{k_1}-s_1)s_1}\frac{1}{(u_2^{k_2}-s_2)s_2}ds_1ds_2\right)e^{-\left(\frac{u_1}{T_1}\right)^{k_1}-\left(\frac{u_2}{T_2}\right)^{k_2}}\frac{du_2}{u_2}\frac{du_1}{u_1}
\end{multline*}

\end{lemma}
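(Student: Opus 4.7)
\medskip\noindent\textbf{Proof plan.} All four identities follow the same template: start from the defining integral representation (\ref{e215}) for $U(\bT,m,\epsilon)$, apply the stated operation on the Laplace side, and push it through the double Laplace integral to identify the corresponding kernel on the Borel side. The crucial analytic point, to be invoked uniformly throughout, is that the growth estimate (\ref{e209}) on $\omega_{\bk}^{\bd}$ of the form $\exp(\nu_1|\tau_1/\epsilon|^{k_1}+\nu_2|\tau_2/\epsilon|^{k_2})$ is dominated by the Laplace kernel $\exp(-(u_1/T_1)^{k_1}-(u_2/T_2)^{k_2})$ whenever the rays $L_{\gamma_1},L_{\gamma_2}$ and the vectors $T_1,T_2$ are so related that $\mathrm{Re}((u_j e^{i\gamma_j}/T_j)^{k_j})\geq \delta_j|u_j/T_j|^{k_j}$ with $\delta_j>\nu_j$. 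Under this condition the integrands of interest are absolutely and locally uniformly integrable on $L_{\gamma_1}\times L_{\gamma_2}$, which legitimizes both differentiation under the integral sign and every application of Fubini below.

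For the first identity I would differentiate (\ref{e215}) under the integral sign in $T_j$ and use the elementary calculation
\[
T_j^{k_j+1}\partial_{T_j}\exp\!\bigl(-(u_j/T_j)^{k_j}\bigr) \;=\; k_j u_j^{k_j}\exp\!\bigl(-(u_j/T_j)^{k_j}\bigr),
\]
which yields the claimed factor $k_j u_j^{k_j}$ inside the integral. For the second identity (and symmetrically the third), I would rely on the Gamma integral representation
\[
T_1^{m_1} \;=\; \frac{k_1}{\Gamma(m_1/k_1)}\int_0^{\infty}r^{m_1}e^{-(r/T_1)^{k_1}}\frac{dr}{r},\qquad m_1\in\N,
\]
obtained by the substitution $t=(r/T_1)^{k_1}$. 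Multiplying (\ref{e215}) by this formula, exchanging the order of integration via Fubini, performing the convolution change of variables $r=(v+s_1)^{1/k_1}$ in the $r$-variable (or equivalently working in the $s=u_1^{k_1}$ chart), and regrouping gives precisely the claimed inner Riemann--Liouville-type kernel $\frac{u_1^{k_1}}{\Gamma(m_1/k_1)}\int_0^{u_1^{k_1}}(u_1^{k_1}-s_1)^{m_1/k_1-1}\omega_{\bk}^{\bd}(s_1^{1/k_1},u_2,m,\epsilon)\,ds_1/s_1$. On the complex rays $L_{\gamma_1}$, the $k_1$-th root and the path $s_1\in[0,u_1^{k_1}]$ are interpreted along the ray of argument $k_1\gamma_1$, with branches chosen so that $s_1^{1/k_1}\in L_{\gamma_1}$; this is the only subtlety beyond the real-variable version of the identity.

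For the fourth identity (Fourier convolution in $m$), I would write the pointwise product $U(\bT,m-m_1,\epsilon)U(\bT,m_1,\epsilon)$ as a quadruple Laplace integral over $L_{\gamma_1}^{2}\times L_{\gamma_2}^{2}$, then in each time variable separately apply the one-dimensional $k_j$-Laplace convolution formula
\[
\bigl(\mathcal{L}_{k_j}\omega_A\bigr)(T_j)\bigl(\mathcal{L}_{k_j}\omega_B\bigr)(T_j)\;=\;\mathcal{L}_{k_j}\!\left[u\mapsto u^{k_j}\!\int_0^{u^{k_j}}\!\frac{\omega_A((u^{k_j}-s)^{1/k_j})\,\omega_B(s^{1/k_j})}{(u^{k_j}-s)s}\,ds\right]\!(T_j),
\]
which one derives by the changes of variables $s_1=u^{k_j}$, $s_2=v^{k_j}$ followed by the additive substitution $s=s_1+s_2$ in the sum of exponents. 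Iterating this once in $T_1$ and once in $T_2$, then integrating over $m_1$ and exchanging this integration with the remaining double Laplace integral (again justified by (\ref{e209}) and the exponential decay in $|m|$), reproduces the stated formula.

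The only step I expect to require real care is the verification that Fubini and the contour changes of variables are legitimate on the complex rays $L_{\gamma_j}$ rather than on $\R_+$; everything else reduces to the two algebraic ingredients above (the Gamma identity and the $k_j$-Laplace convolution), applied one variable at a time. The admissibility hypothesis on the directions $\gamma_j$ relative to $T_j$ described in the first paragraph is precisely what makes these exchanges routine.
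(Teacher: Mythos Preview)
Your proposal is correct and follows essentially the same approach as the paper's proof: differentiation under the integral for the first identity, and Fubini together with the Gamma integral representation and changes of variables in the $k_j$-th power chart for the remaining ones. The only cosmetic difference is direction of argument for the second identity: the paper starts from the right-hand side and, after swapping the $s_1$ and $u_1$ integrations and the substitutions $h_1=u_1^{k_1}$, $h_{11}=h_1-s_1$, $h_{12}=h_{11}/T_1^{k_1}$, recognizes the factor $T_1^{m_1}$ via the Gamma integral, whereas you start from $T_1^{m_1}U$ and run the same computation forward; both rest on the same two ingredients you isolate.
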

\begin{proof}

The first statement is a direct application of the derivation under the integral symbol. The second and third statements are equivalent, so we only give details for the second one.  

In order to give proof for the second statement, we first apply Fubini theorem at the inner and outer integrals. That expression can be rewritten in the next form:
\begin{multline*}
A:=\int_{L_{k_1\gamma_1}}k_1k_2\int_{L_{\gamma_2}}\left(\int_{L_{s_1^{1/k_1},\gamma_1}}\frac{u_1^{k_1-1}}{\Gamma\left(\frac{m_1}{k_1}\right)}(u_1^{k_1}-s_1)^{\frac{m_1}{k_1}-1}e^{-\left(\frac{u_1}{T_1}\right)^{k_1}}\frac{du_1}{u_1}\right)\\
\times \omega_{\bk}^{\bd}(s_1^{1/k_1},u_2,m,\epsilon)e^{-\left(\frac{u_2}{T_2}\right)^{k_2}}\frac{du_2}{u_2}\frac{ds_1}{s_1},
\end{multline*}
where $L_{k_1\gamma_1}:=\{re^{ik_1\gamma_1}:r\ge0\}$ and $L_{s_1^{1/k_1},\gamma_1}=\{re^{i\gamma_1}:r\ge |s_1|^{1/k_1}\}$. We proceed by applying two consecutive deformation paths at the inner integral in the previous expression: first, we apply $h_1=u_1^{k_1}$, and then $h_1-s_1=h_{11}$. We arrive at
\begin{multline*}
A=k_1k_2\int_{L_{k_1\gamma_1}}\frac{1}{\Gamma\left(\frac{m_1}{k_1}\right)}h_{11}^{\frac{m_1}{k_1}-1}e^{-\frac{h_{11}}{T_1^{k_1}}}dh_{11}\\
\times \int_{L_{k_1\gamma_1}}\int_{L_{\gamma_2}} \omega_{\bk}^{\bd}(s_1^{1/k_1},u_2,m,\epsilon)e^{-\frac{s_1}{T_1^{k_1}}-\left(\frac{u_2}{T_2}\right)^{k_2}}\frac{du_2}{u_2}\frac{ds_1}{s_1}
\end{multline*}
The deformation path $\tilde{u}_1=s_1^{1/k_1}$ followed by $h_{12}=\frac{h_{11}}{T_1^{k_1}}$ yields
$$A=U(\bT,m,\epsilon)\frac{1}{\Gamma\left(\frac{m_1}{k_1}\right)}T_1^{m_1}\int_{L_{k_1\gamma_1}-k_1\hbox{arg}(T_1)}h_{12}^{\frac{m_1}{k_1}-1}e^{-h_{12}}dh_{12}.
$$
A deformation path and the definition of Gamma function allow us to conclude that $A=U(\bT,m,\epsilon)$.

The proof of the third formula follows the same lines of arguments, involving Fubini theorem and it is omitted for the sake of brevity.
\end{proof}

\textbf{Remark:} Lemma~\ref{lema257} provides the equivalence of existence of solutions of different equation (\ref{e32c}) and (\ref{e310}), related by Laplace transformation. 

We define the operators
\begin{equation}\label{e558b}
\mathcal{A}_{\delta_{D_1}}\omega_{\bk}(\btau,m,\epsilon)=\sum_{1\le p_1\le \delta_{D_1}-1}\frac{A_{\delta_{D_1},p_1} \tau_1^{k_1}  }{\Gamma(\delta_{D_1}-p_1)}\int_{0}^{\tau_1^{k_1}}(\tau_1^{k_1}-s_1)^{\delta_{D_1}-p_1-1}k_1s_1^{p_1}\omega_{\bk}(s_1^{1/k_1},\tau_2,m,\epsilon)\frac{ds_1}{s_1},
\end{equation}
$$\tilde{\mathcal{A}}_{\tilde{\delta}_{D_j}}\omega_{\bk}(\btau,m,\epsilon)=\sum_{1\le p_j\le \tilde{\delta}_{D_j}-1}\frac{\tilde{A}_{\tilde{\delta}_{D_j},p_j}\tau_2^{k_2}           }{\Gamma(\tilde{\delta}_{D_j}-p_j)}\int_{0}^{\tau_2^{k_2}}(\tau_2^{k_2}-s_2)^{\tilde{\delta}_{D_j}-p_j-1}k_2s_2^{p_j}\omega_{\bk}(\tau_1,s_2^{1/k_2},m,\epsilon)\frac{ds_2}{s_2},$$
for $j=2,3$. Observe that they turn out to be the $m_{k_1}$ (resp. $m_{k_2}$) Borel transform of the operator $A_{D_1}(T_1,\partial_{T_1})$ (resp. $\tilde{A}_{D_j}(T_2,\partial_{T_2})$, for $j=2,3$) (see Section~\ref{secborellaplace} for more details on this).

In view of the assumption,described in (\ref{e331}), we define the natural numbers $d_{l_1,k_1}$ and $d_{l_2,k_2}$ by
\begin{equation}\label{e332}
d_{l_1}=\delta_{l_1}(k_1+1)+d_{l_1,k_1},\quad \tilde{d}_{l_2}=(\tilde{\delta}_{l_2}-1)(k_2+1)+d_{l_2,k_2},
\end{equation}
for all $0\le l_1\le D_1-1$ and $0\le l_2\le D_2-1$.

 Taking into account Lemma~\ref{lema257}, we see that $U(\bT,m,\epsilon)$ satisfies (\ref{e32c}), iff $\omega_{\bk}^{\bd}(\btau,m,\epsilon)$ is a solution of the next equation. 

\begin{multline}
\left(Q(im)+ R_{D_1,D_2}(im)(k_1\tau_1^{k_1})^{\delta_{D_1}}(k_2\tau_2^{k_2})^{\tilde{\delta}_{D_2}-1}+R_{D_3}(im)(k_2\tau_2^{k_2})^{\tilde{\delta}_{D_3}-1}\right)\omega(\btau,m,\epsilon)\\
=-(k_1\tau_1^{k_1})^{\delta_{D_1}}\frac{\tilde{\mathcal{A}}_{D_2}(\tau_2)}{k_2\tau_2^{k_2}}R_{D_1,D_2}(im)\omega(\btau,m,\epsilon)- (k_2\tau_2^{k_2})^{\tilde{\delta}_{D_2}-1}\mathcal{A}_{D_1}(\tau_1)R_{D_1,D_2}(im)\omega(\btau,m,\epsilon)\\
-\mathcal{A}_{D_1}(\tau_1)\frac{\tilde{\mathcal{A}}_{D_2}(\tau_2)}{k_2\tau_2^{k_2}}R_{D_1,D_2}(im)\omega(\btau,m,\epsilon)-\frac{\tilde{\mathcal{A}}_{D_3}(\tau_2)}{k_2\tau_2^{k_2}}R_{D_3}(im)\omega(\btau,m,\epsilon)\\
+\frac{\epsilon^{-1}}{(2\pi)^{\frac{1}{2}}}\frac{\tau_1^{k_1}}{k_2\Gamma\left(1+\frac{1}{k_2}\right)}
\int_0^{\tau_2^{k_2}}(\tau_2^{k_2}-s_2)^{\frac{1}{k_2}}\int_{-\infty}^{\infty}\int_0^{\tau_1^{k_1}}\int_0^{s_2}P_1(i(m-m_1),\epsilon)\\
\times \omega((\tau_1^{k_1}-s_1)^{\frac{1}{k_1}},(s_2-x_2)^{\frac{1}{k_2}},m-m_1,\epsilon)P_2(im_1,\epsilon)\omega(s_1^{\frac{1}{k_1}},x_2^{\frac{1}{k_2}},m_1,\epsilon)\frac{dx_2ds_1dm_1ds_2}{(\tau_1^{k_1}-s_1)s_1(s_2-x_2)x_2}\\
+\sum_{\stackrel{0\le l_j\le D_j-1}{j=1,2}} \epsilon^{\Delta_{l_1,l_2}-d_{l_1}-\tilde{d}_{l_2}+\delta_{l_1}+\tilde{\delta}_{l_2}-1}R_{l_1,l_2}(im)\frac{\tau_1^{k_1}}{k_2\Gamma\left(\frac{d_{l_1,k_1}}{k_1}\right)\Gamma\left(\frac{d_{l_2,k_2}}{k_2}\right)}\\
\times
\int_0^{\tau_2^{k_2}}\int_0^{\tau_1^{k_1}}(\tau_1^{k_1}-s_1)^{\frac{d_{l_1,k_1}}{k_1}-1}(\tau_2^{k_2}-s_2)^{\frac{d_{l_2,k_2}}{k_2}-1}k_1^{\delta_{l_1}}s_1^{\delta_{l_1}}k_2^{\tilde{\delta}_{l_2}}s_2^{\tilde{\delta}_{l_2}}\omega(s_1^{\frac{1}{k_1}},s_2^{\frac{1}{k_2}},m,\epsilon)\frac{ds_1}{s_1}\frac{ds_2}{s_2}\\
+\frac{\epsilon^{-1}}{k_2\Gamma\left(1+\frac{1}{k_2}\right)}\int_{0}^{\tau_2^{k_2}}(\tau_2^{k_2}-s_2)^{\frac{1}{k_2}}\psi_{\bk}(\tau_1,s_2^{\frac{1}{k_2}},m,\epsilon)\frac{ds_2}{s_2},\label{e310}
\end{multline}
where $\psi_{\bk}$ is the formal $m_{k_1}$-Borel transform with respect to $T_1$ and the formal $m_{k_2}$-Borel transform with respect to $T_2$ of $F(\bT,m,\epsilon)$, i.e.
$$\psi_{\bk}(\btau,m,\epsilon) = \sum_{n_1,n_2 \geq 1} F_{n_1,n_2}(m,\epsilon) \frac{\tau_1^{n_1}}{\Gamma(\frac{n_1}{k_1})}\frac{\tau_2^{n_2}}{\Gamma(\frac{n_2}{k_2})}.$$
Observe that $\psi_{\bk}$ is an entire function with respect to $\btau$. Moreover, regarding the construction of $\psi_{\bk}$ and (\ref{e165}), one has 
\begin{multline*}
\left\|\psi_{\bk}(\btau,m,\epsilon)\right\|_{(\bnu,\beta,\mu,\bk,\epsilon)} \leq \sum_{n_1,n_2 \geq 1}
\left\|F_{n_1,n_2}(m,\epsilon)\right\|_{(\beta,\mu)}\\
\times (\sup_{\btau \in (\bar{D}(0,\rho) \cup S_{d_1})\times S_{d_2})}
\frac{1 + |\frac{\tau_1}{\epsilon}|^{2k_1}}{|\frac{\tau_1}{\epsilon}|}\frac{1 + |\frac{\tau_2}{\epsilon}|^{2k_2}}{|\frac{\tau_2}{\epsilon}|} \exp(-\nu_1 |\frac{\tau_1}{\epsilon}|^{k_1}-\nu_2 |\frac{\tau_2}{\epsilon}|^{k_2})
\frac{|\tau_1|^{n_1}|\tau_2|^{n_2}}{\Gamma(\frac{n_1}{k_1})\Gamma(\frac{n_2}{k_2})}) 
\end{multline*}
for all $\epsilon \in D(0,\epsilon_{0}) \setminus \{ 0 \}$, any unbounded sectors $S_{d_1}$ and $S_{d_2}$ centered at 0 and bisecting directions $d_1 \in \mathbb{R}$ and $d_2\in\mathbb{R}$, respectively, for some $\bnu=(\nu_1,\nu_2)\in(0,+\infty)^2$. 

\textbf{Remark:} According to classical estimates and Stirling formula, we observe that $\psi_{\bk}(\btau,m,\epsilon) \in F_{(\bnu,\beta,\mu,\bk,\epsilon)}^{\bd}$. See Definition~\ref{def2}.

We write
\begin{equation}\label{e326}P_m(\btau)=Q(im)+ R_{D_1,D_2}(im)(k_1\tau_1^{k_1})^{\delta_{D_1}}(k_2\tau_2^{k_2})^{\tilde{\delta}_{D_2}-1}+R_{D_3}(im)(k_2\tau_2^{k_2})^{\tilde{\delta}_{D_3}-1}.
\end{equation}

\section{Construction of the solution for a convolution equation}\label{seccons}

The main aim in this section is to provide with a solution of (\ref{e310}) which belongs to certain Banach space of functions satisfying bounds in the form (\ref{e209}). Such function is obtained as a fixed point of an operator acting on Banach spaces, introduced and studied in the incoming section.

\subsection{Banach spaces of exponencial growth}\label{subsecespbanach}

We consider the open disc $D(0,\rho)$ for some $\rho>0$. Let $S_{d_j}$ be open unbounded sectors with bisecting directions $d_j \in \mathbb{R}$, for $j=1,2$, and let $\mathcal{E}$ be an open sector with finite radius $r_{\mathcal{E}}$, all with vertex at $0$ in $\mathbb{C}$.

The following norm is inspired from that considered by the authors in~\cite{family1}. It is an adecquate modification of that described in~\cite{lama}, adapted to the framework of two complex time variables.

\begin{defin}\label{def2} Let $\nu_1,\nu_2,\beta,\mu>0$ and $\rho>0$ be positive real numbers. Let $k_1,k_2 \geq 1$ be integer numbers and let $\epsilon \in \mathcal{E}$. We put $\bnu=(\nu_1,\nu_2)$, $\bk=(k_1,k_2)$, $\bd=(d_1,d_2)$, and denote
$F_{(\bnu,\beta,\mu,\bk,\epsilon)}^{\bd}$ the vector space of continuous functions $(\btau,m) \mapsto h(\btau,m)$ on the set
$(\bar{D}(0,\rho) \cup S_{d_1})\times  S_{d_2} \times \mathbb{R}$, which are holomorphic with respect to $\btau$ on $(D(0,\rho) \cup S_{d_1})\times  S_{d_2} $ and such that
\begin{multline}
||h(\btau,m)||_{(\bnu,\beta,\mu,\bk,\epsilon)}\\
=
\sup_{\stackbin{\btau \in (\bar{D}(0,\rho) \cup S_{d_1})\times  S_{d_2}}{m \in \mathbb{R}}} (1+|m|)^{\mu}
\frac{1 + |\frac{\tau_1}{\epsilon}|^{2k_1}}{|\frac{\tau_1}{\epsilon}|}\frac{1 + |\frac{\tau_2}{\epsilon}|^{2k_2}}{|\frac{\tau_2}{\epsilon}|}\exp( \beta|m| - \nu_1|\frac{\tau_1}{\epsilon}|^{k_1}-\nu_2|\frac{\tau_2}{\epsilon}|^{k_2} ) |h(\tau,m)|
\end{multline}
is finite. The normed space
$(F_{(\bnu,\beta,\mu,\bk,\epsilon)}^{\bd},||.||_{(\bnu,\beta,\mu,\bk,\epsilon)})$ is a Banach space.
\end{defin}

We fix $\epsilon \in \mathcal{E}$, $\mu,\beta,>0$ in the whole subsection. We also choose $\bnu=(\nu_1,\nu_2)\in (0,\infty)^2$, $\bd=(d_1,d_2)\in\R^2$, and $\bk=(k_1,k_2)\in\mathbb{N}^2$.

We first state some technical results. The first one follows directly from the definition of the norm of the Banach space.

\begin{lemma}\label{lema1} Let $(\btau,m) \mapsto a(\btau,m)$ be a bounded continuous function on
$(\bar{D}(0,\rho) \cup S_{d_1})\times S_{d_2} \times \mathbb{R}$, holomorphic with respect to $\btau$ on $(D(0,\rho) \cup S_{d_1})\times  S_{d_2}$. Then,
$$
|| a(\btau,m) h(\btau,m) ||_{(\bnu,\beta,\mu,\bk,\epsilon)} \leq
\left( \sup_{\btau \in (\bar{D}(0,\rho) \cup S_{d_1})\times  S_{d_2},m \in \mathbb{R}} |a(\btau,m)| \right)
||h(\btau,m)||_{(\bnu,\beta,\mu,\bk,\epsilon)}
$$
for all $h(\btau,m) \in F_{(\bnu,\beta,\mu,\bk,\epsilon)}^{\bd}$.
\end{lemma}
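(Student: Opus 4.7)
The statement is the standard fact that multiplication by a bounded function acts continuously on the weighted Banach space $F^{\bd}_{(\bnu,\beta,\mu,\bk,\epsilon)}$, and it should follow immediately from the definition of the norm. The plan is to unfold the definition of $\|\cdot\|_{(\bnu,\beta,\mu,\bk,\epsilon)}$, use the pointwise bound $|a(\btau,m)h(\btau,m)| \le M|h(\btau,m)|$ with $M = \sup |a|$, and then take the supremum.

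More concretely, I would first remark that the product $a(\btau,m)h(\btau,m)$ inherits the required regularity: it is continuous on $(\bar{D}(0,\rho)\cup S_{d_1})\times S_{d_2}\times\R$ and holomorphic in $\btau$ on $(D(0,\rho)\cup S_{d_1})\times S_{d_2}$, because both factors are. Then, for any $(\btau,m)$ in the domain, the weight
\[
W(\btau,m) := (1+|m|)^{\mu}\frac{1+|\tau_1/\epsilon|^{2k_1}}{|\tau_1/\epsilon|}\frac{1+|\tau_2/\epsilon|^{2k_2}}{|\tau_2/\epsilon|}\exp\bigl(\beta|m|-\nu_1|\tau_1/\epsilon|^{k_1}-\nu_2|\tau_2/\epsilon|^{k_2}\bigr)
\]
is nonnegative, so
\[
W(\btau,m)\,|a(\btau,m)h(\btau,m)| \;\le\; |a(\btau,m)|\cdot W(\btau,m)\,|h(\btau,m)| \;\le\; M\cdot W(\btau,m)\,|h(\btau,m)|.
\]
Taking the supremum over the appropriate set of $(\btau,m)$ on both sides yields exactly the claimed inequality, and in particular shows that $a\cdot h\in F^{\bd}_{(\bnu,\beta,\mu,\bk,\epsilon)}$.

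There is essentially no obstacle here; the only minor care needed is to ensure the product sits in the right space (continuity and holomorphy in the right variables), which is automatic from the hypotheses on $a$ and $h$. No manipulations of the convolution or integral operators are required at this stage; the result is purely a consequence of the multiplicative structure of the weight.
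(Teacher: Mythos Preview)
Your argument is correct and matches the paper's own treatment: the paper states that this lemma ``follows directly from the definition of the norm of the Banach space'' and gives no further details, which is exactly what you have spelled out.
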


\begin{lemma}\label{lema2}
Let $\boldsymbol{\sigma}=(\sigma_1,\sigma_2)\in(0,\infty)^2$, and assume that $a_{\boldsymbol{\sigma},\bk}$ is a holomorphic function of $(D(0,\rho)\cup S_{d_1})\times S_{d_2}$, continuous up to $(\overline{D}(0,\rho)\cup S_{d_1})\times S_{d_2}$, such that
$$|a_{\boldsymbol{\sigma},\bk}(\btau)|\le \frac{1}{(1+|\tau_1|^{k_1})^{\sigma_1}(1+|\tau_2|^{k_2})^{\sigma_2}},$$
for every $\btau\in (\overline{D}(0,\rho)\cup S_{d_1})\times  S_{d_2}$. We take $0\le \tilde{\sigma}_j\le\sigma_j$ for $j=1,2$. Assume that one of the following hold:
\begin{itemize}
\item $\sigma_3\ge0$ and $\sigma_3+\sigma_4\le \sigma_2-\tilde{\sigma}_2$,
\item $\sigma_3=\frac{\xi}{k_2}-1$ and $\sigma_3+\frac{1}{k_2}\le \sigma_2-\tilde{\sigma}_2$,
\end{itemize}
where $\xi>1$. Then, there exists $C_1>0$, depending on $\bk,\nu_2,\tilde{\sigma}_j,\sigma_\ell,$ $j=1,2$, $\ell=1,\ldots,4$, such that
\begin{multline*}
\left\|a_{\boldsymbol{\sigma},\bk}(\btau)\tau_1^{\tilde{\sigma}_1k_1}\tau_2^{\tilde{\sigma}_2k_2}\int_0^{\tau_2^{k_2}}(\tau_2^{k_2}-s_2)^{\sigma_3}s_2^{\sigma_4}f(\tau_1,s_2^{\frac{1}{k_2}},m)ds_2\right\|_{(\bnu,\beta,\mu,\bk,\epsilon)}\\
\le C_1|\epsilon|^{k_2(1+\sigma_3+\sigma_4-\sigma_2+\tilde{\sigma}_2)}\left\|f(\btau,m)\right\|_{(\bnu,\beta,\mu,\bk,\epsilon)},
\end{multline*}
for every $f\in F^{\bd}_{(\bnu,\beta,\mu,\bk,\epsilon)}$.
\end{lemma}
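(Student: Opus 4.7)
The plan is to majorize the inner integral pointwise using the norm of $f$, and then to track the powers of $|\epsilon|$ carefully against the hypothesis.

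\emph{Parametrization and cancellations.} I first parametrize $s_2=v\tau_2^{k_2}$ for $v\in[0,1]$, so that $s_2^{1/k_2}=v^{1/k_2}\tau_2\in S_{d_2}$ on the natural branch, and the modulus of the inner integral is bounded by $|\tau_2|^{k_2(\sigma_3+\sigma_4+1)}\int_0^1(1-v)^{\sigma_3}v^{\sigma_4}|f(\tau_1,v^{1/k_2}\tau_2,m)|\,dv$. Inserting the pointwise bound on $|f(\tau_1,v^{1/k_2}\tau_2,m)|$ coming from Definition~\ref{def2} and then multiplying by the weight of the target norm, the $m$-factors $(1+|m|)^{\mu}e^{\beta|m|}$ and the $\tau_1$-factors $\frac{1+|\tau_1/\epsilon|^{2k_1}}{|\tau_1/\epsilon|}e^{-\nu_1|\tau_1/\epsilon|^{k_1}}$ cancel cleanly, while $\frac{1+|\tau_2/\epsilon|^{2k_2}}{|\tau_2/\epsilon|}\cdot|\tau_2/\epsilon|=1+|\tau_2/\epsilon|^{2k_2}$ survives. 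The hypothesis on $|a_{\bsigma,\bk}|$ together with $|\tau_1|^{\tilde\sigma_1 k_1}\le(1+|\tau_1|^{k_1})^{\sigma_1}$ (valid because $\tilde\sigma_1\le\sigma_1$) absorbs the remaining $\tau_1$-dependence into a factor bounded by $1$.

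\emph{Reduction to a one-dimensional estimate.} Setting $y=|\tau_2/\epsilon|^{k_2}$ and $\eta=|\epsilon|^{k_2}\in(0,\epsilon_0^{k_2}]$, the problem reduces to proving that
\[
J(y,\eta):=\eta^{\sigma_2}\frac{y^{\tilde\sigma_2+\sigma_3+\sigma_4+1}(1+y^2)}{(1+y\eta)^{\sigma_2}}\,e^{-\nu_2 y}\int_0^1(1-v)^{\sigma_3}v^{\sigma_4+1/k_2}\frac{e^{\nu_2 v y}}{1+v^2 y^2}\,dv
\]
is uniformly bounded in $y>0$ and $\eta$. Indeed, the overall factor $(y\eta)^{\tilde\sigma_2+\sigma_3+\sigma_4+1}$ coming from $|\tau_2|^{k_2(\tilde\sigma_2+\sigma_3+\sigma_4+1)}$ combines with the $\eta^{-\sigma_2}$ absorbed into $J$ to produce exactly the prefactor $|\epsilon|^{k_2(1+\sigma_3+\sigma_4-\sigma_2+\tilde\sigma_2)}$ required in front of $\|f\|_{(\bnu,\beta,\mu,\bk,\epsilon)}$.

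\emph{Integral bound and conclusion.} To estimate the inner integral I split $[0,1]=[0,1/2]\cup[1/2,1]$. On $[0,1/2]$ the bound $e^{\nu_2 v y}\le e^{\nu_2 y/2}$ yields an exponentially decaying contribution after multiplication by $e^{-\nu_2 y}$; on $[1/2,1]$ the elementary bound $1+v^2y^2\ge y^2/4$ extracts a factor $4/y^2$, and the substitution $w=1-v$, $t=\nu_2 w y$ reduces the remaining $(1-v)^{\sigma_3}e^{\nu_2 v y}$-piece to an incomplete Gamma integral $(\nu_2 y)^{-\sigma_3-1}\int_0^{\nu_2 y}t^{\sigma_3}e^{-t}dt\le\Gamma(\sigma_3+1)(\nu_2 y)^{-\sigma_3-1}$, which converges in both cases of the hypothesis (since $\sigma_3>-1$). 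This produces
\[
\int_0^1(1-v)^{\sigma_3}v^{\sigma_4+1/k_2}\frac{e^{\nu_2 v y}}{1+v^2y^2}\,dv\le C(1+y)^{-\sigma_3-3}e^{\nu_2 y}.
\]
Substituting this into $J$ and using $y^{\tilde\sigma_2+\sigma_3+\sigma_4+1}(1+y^2)/(1+y)^{\sigma_3+3}\le C(1+y)^{\sigma_4+\tilde\sigma_2}$, the remaining task is to bound $\eta^{\sigma_2}(1+y\eta)^{-\sigma_2}(1+y)^{\sigma_4+\tilde\sigma_2}$. Writing this as
\[
\left(\frac{\eta(1+y)}{1+y\eta}\right)^{\sigma_4+\tilde\sigma_2}\cdot\frac{\eta^{\sigma_2-\sigma_4-\tilde\sigma_2}}{(1+y\eta)^{\sigma_2-\sigma_4-\tilde\sigma_2}},
\]
the first factor is bounded by $\max(\eta,1)^{\sigma_4+\tilde\sigma_2}\le\max(\epsilon_0^{k_2},1)^{\sigma_4+\tilde\sigma_2}$ (since $y\mapsto\eta(1+y)/(1+y\eta)$ is monotone between $\eta$ at $y=0$ and $1$ at $y=\infty$), while the hypothesis $\sigma_3+\sigma_4\le\sigma_2-\tilde\sigma_2$ (or its alternative $\sigma_3+1/k_2\le\sigma_2-\tilde\sigma_2$) forces $\sigma_2-\sigma_4-\tilde\sigma_2\ge 0$ and bounds the second factor by $\eta^{\sigma_2-\sigma_4-\tilde\sigma_2}\le\epsilon_0^{k_2(\sigma_2-\sigma_4-\tilde\sigma_2)}$. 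The main obstacle I anticipate is precisely this final algebraic balancing between the polynomial growth in $y$, the exponential decay, and the $(1+y\eta)^{-\sigma_2}$ denominator; it is there that each of the two alternative hypotheses is used in a sharp way, the second one accounting for the regime where $\sigma_3$ is allowed to be negative.
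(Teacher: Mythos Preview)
Your approach coincides with the paper's: both absorb the $\tau_1$- and $m$-dependence via the bound on $a_{\bsigma,\bk}$ and the definition of the norm, reducing everything to a one-dimensional supremum in $|\tau_2/\epsilon|$. The paper stops at exactly the expression you reach (in the variable $h=|s_2|$ rather than your $v=s_2/\tau_2^{k_2}$) and then simply invokes the estimates for ``$C_2(\epsilon)$'' from Proposition~2 of \cite{lama}; your $[0,1/2]\cup[1/2,1]$ split with the incomplete-Gamma substitution is a self-contained version of that cited step. So the two proofs are the same in structure, with yours supplying the details the paper outsources.

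One point needs care. In your last paragraph you assert that the second alternative hypothesis ``$\sigma_3+1/k_2\le\sigma_2-\tilde\sigma_2$'' also forces $\sigma_2-\sigma_4-\tilde\sigma_2\ge 0$. That implication is valid under the \emph{first} alternative (since $\sigma_3\ge 0$ gives $\sigma_4\le\sigma_3+\sigma_4\le\sigma_2-\tilde\sigma_2$), but the second alternative, as stated, imposes no constraint whatsoever on $\sigma_4$, so your final balancing $\eta^{\sigma_2-\sigma_4-\tilde\sigma_2}(1+y\eta)^{-(\sigma_2-\sigma_4-\tilde\sigma_2)}\le\epsilon_0^{k_2(\sigma_2-\sigma_4-\tilde\sigma_2)}$ is not justified there. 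In practice every application of the lemma in the paper uses only the first alternative (with $\sigma_3\ge 0$ and nonnegative $\sigma_4$), so the gap is harmless for the paper's purposes; but if you want your argument to cover the second alternative literally, you should either add the missing assumption on $\sigma_4$ or rework the endgame in that regime.
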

\begin{proof}
There exists $C_{1.1}>0$ only depending on $\sigma_1,\sigma_2,k_1,k_2$ such that
$$|a_{\boldsymbol{\sigma},\bk}(\btau)\tau_1^{\tilde{\sigma}_1k_1}\tau_2^{\tilde{\sigma}_2k_2}|\le \frac{C_{1.1}}{(1+|\tau_2|^{k_2})^{\sigma_2-\tilde{\sigma}_2}},$$ 
for every $\btau\in (\overline{D}(0,\rho)\cup S_{d_1})\times S_{d_2}$. We apply the definition of the norm of $F_{(\bnu,\beta,\mu,\bk,\epsilon)}^{\bd}$ to arrive at 
\begin{multline*}
\left\|a_{\boldsymbol{\sigma},\bk}(\btau)\tau_1^{\tilde{\sigma}_1k_1}\tau_2^{\tilde{\sigma}_2k_2}\int_0^{\tau_2^{k_2}}(\tau_2^{k_2}-s_2)^{\sigma_3}s_2^{\sigma_4}f(\tau_1,s_2^{\frac{1}{k_2}},m)ds_2\right\|_{(\bnu,\beta,\mu,\bk,\epsilon)}\\
\le C_{1.1}\left\|f(\btau,m)\right\|_{(\bnu,\beta,\mu,\bk,\epsilon)}\sup_{\tau_2\in S_{d_2}}\frac{1+\left|\frac{\tau_2}{\epsilon}\right|^{2k_2}}{\left|\frac{\tau_2}{\epsilon}\right|}\exp\left(-\nu_2\left|\frac{\tau_2}{\epsilon}\right|^{k_2}\right)\frac{1}{(1+|\tau_2|^{k_2})^{\sigma_2-\tilde{\sigma}_2}}\\
\times \int_0^{|\tau_2|^{k_2}}(|\tau_2|^{k_2}-h)^{\sigma_3}h^{\sigma_4}\frac{\frac{h^{\frac{1}{k_2}}}{|\epsilon|}}{1+\frac{h^2}{|\epsilon|^{2k_2}}}\exp\left(\nu_2\frac{h}{|\epsilon|^{k_2}}\right)dh.
\end{multline*}
The proof concludes with the steps providing a bound for $C_2(\epsilon)$ in the proof of Proposition 2 in~\cite{lama}.
\end{proof}

An analogous result holds by interchanging the role of the time variables. 

\begin{lemma}\label{lema22}
Under the same hypotheses as in Lemma~\ref{lema2}, assume that 
\begin{itemize}
\item $\sigma_3\ge0$ and $\sigma_3+\sigma_4\le \sigma_1-\tilde{\sigma}_1$,
\item $\sigma_3=\frac{\xi}{k_1}-1$ and $\sigma_3+\frac{1}{k_1}\le \sigma_1-\tilde{\sigma}_1$,
\end{itemize}
where $\xi>1$. Then, there exists $C_1>0$, depending on $\bk,\nu_1,\tilde{\sigma}_j,\sigma_\ell,$ for $j=1,2$ and $\ell=1\ldots,4$, such that
\begin{multline*}
\left\|a_{\boldsymbol{\sigma},\bk}(\btau)\tau_1^{\tilde{\sigma}_1k_1}\tau_2^{\tilde{\sigma}_2k_2}\int_0^{\tau_1^{k_1}}(\tau_1^{k_1}-s_1)^{\sigma_3}s_1^{\sigma_4}f(s_1^{\frac{1}{k_1}},\tau_2,m)ds_1\right\|_{(\bnu,\beta,\mu,\bk,\epsilon)}\\
\le C_1|\epsilon|^{k_1(1+\sigma_3+\sigma_4-\sigma_1+\tilde{\sigma}_1)}\left\|f(\btau,m)\right\|_{(\bnu,\beta,\mu,\bk,\epsilon)},
\end{multline*}
for every $f\in F^{\bd}_{(\bnu,\beta,\mu,\bk,\epsilon)}$.
\end{lemma}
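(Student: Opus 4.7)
The plan is to mirror verbatim the proof of Lemma~\ref{lema2} after the obvious permutation of roles between the two time variables. The Banach norm $\|\cdot\|_{(\bnu,\beta,\mu,\bk,\epsilon)}$ and the hypothesis on $a_{\boldsymbol{\sigma},\bk}$ are completely symmetric in $(\tau_1,k_1,\nu_1,\sigma_1,\tilde{\sigma}_1)$ versus $(\tau_2,k_2,\nu_2,\sigma_2,\tilde{\sigma}_2)$; the only structural difference is that the inner integral now lives on the first variable. Hence, once the indices are relabelled, no new analytic idea is required.

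First, from the pointwise bound $|a_{\boldsymbol{\sigma},\bk}(\btau)|\le (1+|\tau_1|^{k_1})^{-\sigma_1}(1+|\tau_2|^{k_2})^{-\sigma_2}$ together with $0\le\tilde{\sigma}_j\le\sigma_j$ for $j=1,2$, one extracts a constant $C_{1.1}>0$ depending only on $\sigma_1,\sigma_2,\tilde{\sigma}_1,\tilde{\sigma}_2,k_1,k_2$ such that
$$|a_{\boldsymbol{\sigma},\bk}(\btau)\,\tau_1^{\tilde{\sigma}_1 k_1}\tau_2^{\tilde{\sigma}_2 k_2}|\le \frac{C_{1.1}}{(1+|\tau_1|^{k_1})^{\sigma_1-\tilde{\sigma}_1}},\qquad \btau\in(\overline{D}(0,\rho)\cup S_{d_1})\times S_{d_2},$$
so that the $\tau_2$-dependence of the prefactor has been absorbed. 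Next, I would insert this into the definition of the norm and bound $|f(s_1^{1/k_1},\tau_2,m)|$ above by $\|f\|_{(\bnu,\beta,\mu,\bk,\epsilon)}$ times the weight in the norm evaluated at $(s_1^{1/k_1},\tau_2,m)$. The factors containing $m$ and $\tau_2$ cancel the corresponding weights, leaving a supremum over $\tau_1\in \overline{D}(0,\rho)\cup S_{d_1}$ of a purely one-dimensional expression of the form
$$\mathcal{J}(\tau_1):=\frac{1+|\tau_1/\epsilon|^{2k_1}}{|\tau_1/\epsilon|}\,\frac{e^{-\nu_1|\tau_1/\epsilon|^{k_1}}}{(1+|\tau_1|^{k_1})^{\sigma_1-\tilde{\sigma}_1}}\int_0^{|\tau_1|^{k_1}}(|\tau_1|^{k_1}-h)^{\sigma_3}h^{\sigma_4}\,\frac{h^{1/k_1}/|\epsilon|}{1+h^{2}/|\epsilon|^{2k_1}}\,e^{\nu_1 h/|\epsilon|^{k_1}}\,dh,$$
obtained after the change of variable $h=s_1$ in the inner integral.

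Finally, this supremum is precisely the one-variable quantity that is bounded by $C\,|\epsilon|^{k_1(1+\sigma_3+\sigma_4-\sigma_1+\tilde{\sigma}_1)}$ in the proof of Proposition~2 of~\cite{lama}, the very reference invoked at the end of the proof of Lemma~\ref{lema2}, under either of the two dichotomic hypotheses on $\sigma_3,\sigma_4$ stated here, which are the exact symmetric analogues of those in Lemma~\ref{lema2} with $k_1$ playing the role of $k_2$. Combining this with the pull-out step of the previous paragraph yields the announced estimate.

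I do not foresee any genuine obstacle; the only care required is bookkeeping. In particular, one must verify that after extracting $(1+|\tau_1|^{k_1})^{-(\sigma_1-\tilde{\sigma}_1)}$ from $a_{\boldsymbol{\sigma},\bk}\,\tau_1^{\tilde{\sigma}_1 k_1}\tau_2^{\tilde{\sigma}_2 k_2}$ and cancelling the $\tau_2$-weight against the weight appearing in the bound for $f$, the surviving object is literally the quantity $C_2(\epsilon)$ in the proof of Proposition~2 of~\cite{lama}, so that no new one-dimensional computation has to be performed.
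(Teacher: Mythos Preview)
Your proposal is correct and matches the paper's approach exactly: the paper does not even write out a separate proof for this lemma, noting only that ``an analogous result holds by interchanging the role of the time variables,'' which is precisely the symmetry argument you spell out in detail. Your reduction to the one-dimensional estimate $C_2(\epsilon)$ from Proposition~2 of~\cite{lama} is the same reference the paper invokes at the end of the proof of Lemma~\ref{lema2}.
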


Grouping the integral operators in Lemma~\ref{lema2} and Lemma~\ref{lema22} the following result is attained.

\begin{lemma}\label{lema23}
Let $\boldsymbol{\sigma}\in (0,\infty)^{2}$. Assume that $a_{\boldsymbol{\sigma},\bk}$ is given as in Lemma~\ref{lema2}. Let $0\le\tilde{\sigma}_j\le\sigma_j$ for $j=1,2$, and $\sigma_{31},\sigma_{32},\sigma_{41},\sigma_{42}$ be real numbers such that
\begin{itemize}
\item $\sigma_{3j}\ge0$ and $\sigma_{3j}+\sigma_{4j}\le \sigma_j-\tilde{\sigma}_j$,
\item $\sigma_{3j}=\frac{\xi_j}{k_j}-1$ and $\sigma_{3j}+\frac{1}{k_j}\le \sigma_j-\tilde{\sigma}_j$,
\end{itemize}
for $j=1,2$ and where $\xi_j>1$. Then, there exists $C_1>0$ depending on $\bk,\bnu,\sigma_j,\tilde{\sigma}_j,\sigma_{3j},\sigma_{4j}$ for $j=1,2$, such that
\begin{multline*}
\left\|a_{\boldsymbol{\sigma},\bk}(\btau)\tau_1^{\tilde{\sigma}_1k_1}\tau_2^{\tilde{\sigma}_2k_2}\int_0^{\tau_1^{k_1}}\int_0^{\tau_2^{k_2}}(\tau_1^{k_1}-s_1)^{\sigma_{31}}s_1^{\sigma_{41}}(\tau_2^{k_2}-s_2)^{\sigma_{32}}s_2^{\sigma_{42}}f(s_1^{\frac{1}{k_1}},s_2^{\frac{1}{k_2}},m)ds_2ds_1\right\|_{(\bnu,\beta,\mu,\bk,\epsilon)}\\
\le C_1^2|\epsilon|^{k_1(1+\sigma_{31}+\sigma_{41}-\sigma_1+\tilde{\sigma}_1)+k_2(1+\sigma_{32}+\sigma_{42}-\sigma_2+\tilde{\sigma}_2)}\left\|f(\btau,m)\right\|_{(\bnu,\beta,\mu,\bk,\epsilon)},
\end{multline*}
for every $f\in F^{\bd}_{(\bnu,\beta,\mu,\bk,\epsilon)}$.
\end{lemma}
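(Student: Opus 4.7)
The plan is to carry out the estimate directly, exploiting the tensor-product structure shared by the weight in the Banach norm $\|\cdot\|_{(\bnu,\beta,\mu,\bk,\epsilon)}$ and by the pointwise bound on $a_{\boldsymbol{\sigma},\bk}$. In essence the statement is obtained by running the one-dimensional arguments behind Lemma~\ref{lema2} and Lemma~\ref{lema22} simultaneously in the two time variables and multiplying the resulting estimates; no further fixed-point or summability argument is needed, which is consistent with the paper's framing of the result as a ``grouping'' of the two earlier lemmas.

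First I would apply Fubini to replace the double integral by an iterated one, and use the hypothesis on $a_{\boldsymbol{\sigma},\bk}$ together with the trivial bound $|\tau_j|^{\tilde{\sigma}_j k_j}\le(1+|\tau_j|^{k_j})^{\tilde{\sigma}_j}$ to factor the quantity $|a_{\boldsymbol{\sigma},\bk}(\btau)\tau_1^{\tilde{\sigma}_1k_1}\tau_2^{\tilde{\sigma}_2k_2}|$ as a product of a function of $|\tau_1|$ alone and a function of $|\tau_2|$ alone. Inserting the pointwise consequence
\[
|f(s_1^{1/k_1},s_2^{1/k_2},m)|\le \|f\|_{(\bnu,\beta,\mu,\bk,\epsilon)}\,(1+|m|)^{-\mu}e^{-\beta|m|}\,\frac{s_1^{1/k_1}/|\epsilon|}{1+s_1^{2}/|\epsilon|^{2k_1}}\,\frac{s_2^{1/k_2}/|\epsilon|}{1+s_2^{2}/|\epsilon|^{2k_2}}\,e^{\nu_1 s_1/|\epsilon|^{k_1}+\nu_2 s_2/|\epsilon|^{k_2}}
\]
of the definition of the norm into the supremum defining the left-hand side, the factors $(1+|m|)^{\mu}$ and $e^{\beta|m|}$ cancel against their counterparts and the $m$-supremum becomes trivial.

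What remains is a supremum over $\btau\in(\overline{D}(0,\rho)\cup S_{d_1})\times S_{d_2}$ of an expression that has cleanly separated into a product of a factor depending only on $(\tau_1,s_1)$ and a factor depending only on $(\tau_2,s_2)$; hence the sup of the product equals the product of the two one-variable suprema. Each such one-variable supremum is precisely the quantity estimated in the proof of Lemma~\ref{lema2} (for the $(\tau_2,s_2)$ factor) and of Lemma~\ref{lema22} (for the $(\tau_1,s_1)$ factor); moreover the two alternative conditions on $\sigma_{3j}$ in the hypothesis are designed to match, case by case, the two alternative cases of those lemmas. Invoking the one-variable estimates produces the factors $C_1|\epsilon|^{k_1(1+\sigma_{31}+\sigma_{41}-\sigma_1+\tilde{\sigma}_1)}$ and $C_1|\epsilon|^{k_2(1+\sigma_{32}+\sigma_{42}-\sigma_2+\tilde{\sigma}_2)}$, and multiplying them gives the claimed bound with constant $C_1^{2}$.

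The only point that needs any attention is to verify that the two pairs of structural hypotheses on $(\sigma_{31},\sigma_{41})$ and $(\sigma_{32},\sigma_{42})$, stated in already decoupled form, survive the factorization so that each one-dimensional integral satisfies the exact assumption required by the corresponding previous lemma; this is immediate from the statement. Hence the expected obstacle here is essentially bookkeeping: confirming that the product-factorization of both the weight and of $a_{\boldsymbol{\sigma},\bk}$ is genuinely compatible with the splitting of the double integral in two independent one-variable problems, after which the conclusion is a direct consequence of Lemma~\ref{lema2} and Lemma~\ref{lema22}.
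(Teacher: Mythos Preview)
Your proposal is correct and matches the paper's approach: the paper states the lemma without detailed proof, remarking only that it is attained by ``grouping the integral operators in Lemma~\ref{lema2} and Lemma~\ref{lema22},'' and your argument makes precise exactly this factorization-and-product step. The tensor-product structure of both the weight in $\|\cdot\|_{(\bnu,\beta,\mu,\bk,\epsilon)}$ and the bound on $a_{\boldsymbol{\sigma},\bk}$ is what allows the double-integral estimate to split cleanly into the product of the two one-variable estimates, yielding the constant $C_1^2$ and the additive exponents of $|\epsilon|$ as you describe.
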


The proof of Proposition 1 in~\cite{lama} can be adapted with minor modifications to the Banach spaces under study.

\begin{lemma}\label{lemaaux}
Let $\gamma_2>0$. Assume that $1/k_2\le \gamma_2\le 1$. Then, there exists $C_2>0$ (depending on $\bnu,\bk,\gamma_2$) such that
$$\left\|\int_{0}^{\tau_2^{k_2}}(\tau_2^{k_2}-s_2)^{\gamma_2}f(\tau_1,s_2^{\frac{1}{k_2}},m)\frac{ds_2}{s_2}\right\|_{(\bnu,\beta,\mu,\bk,\epsilon)}\le C_2|\epsilon|^{k_2\gamma_2}\left\|f(\btau,m)\right\|_{(\bnu,\beta,\mu,\bk,\epsilon)},$$
for every $f(\btau,m)\in F^{\bd}_{(\bnu,\beta,\mu,\bk,\epsilon)}$. 
\end{lemma}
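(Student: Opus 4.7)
The plan is to mimic the estimate in Proposition 1 of~\cite{lama}, adapted so that only the $\tau_2$-integral is active (the variables $\tau_1$ and $m$ are essentially spectators). First I would use the definition of the norm to write, for every $(\tau_1, s_2^{1/k_2}, m)$,
$$
|f(\tau_1, s_2^{1/k_2}, m)| \le \|f\|_{(\bnu,\beta,\mu,\bk,\epsilon)} \frac{1}{(1+|m|)^\mu} \frac{|\tau_1/\epsilon|}{1+|\tau_1/\epsilon|^{2k_1}} \frac{s_2^{1/k_2}/|\epsilon|}{1+s_2^{2}/|\epsilon|^{2k_2}} e^{-\beta|m|+\nu_1|\tau_1/\epsilon|^{k_1}+\nu_2 s_2/|\epsilon|^{k_2}},
$$
parametrizing $s_2$ along the ray from $0$ to $\tau_2^{k_2}$ (by monomial $h = |s_2|$). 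Substituting this bound into the norm on the left-hand side and pulling through the supremum, the $(m, \tau_1)$-weights cancel exactly against the corresponding norm weights, leaving
$$
\|f\|_{(\bnu,\beta,\mu,\bk,\epsilon)} \sup_{\tau_2\in S_{d_2}} \frac{1+|\tau_2/\epsilon|^{2k_2}}{|\tau_2/\epsilon|} e^{-\nu_2|\tau_2/\epsilon|^{k_2}} \int_0^{|\tau_2|^{k_2}} (|\tau_2|^{k_2}-h)^{\gamma_2} \frac{h^{1/k_2}/|\epsilon|}{1+h^{2}/|\epsilon|^{2k_2}} e^{\nu_2 h/|\epsilon|^{k_2}} \frac{dh}{h}.
$$

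Next I would rescale, setting $h = |\epsilon|^{k_2} u$ and $X = |\tau_2/\epsilon|^{k_2}$. The factor $(|\tau_2|^{k_2} - h)^{\gamma_2}$ produces $|\epsilon|^{k_2\gamma_2}(X-u)^{\gamma_2}$, while the remaining weights become independent of $\epsilon$. This isolates the claimed factor $|\epsilon|^{k_2\gamma_2}$ and reduces the problem to showing
$$
\sup_{X>0} J(X) := \sup_{X>0} \frac{1+X^2}{X^{1/k_2}} e^{-\nu_2 X} \int_0^{X} (X-u)^{\gamma_2} \frac{u^{1/k_2-1}}{1+u^2} e^{\nu_2 u}\, du \ < \ \infty.
$$

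Finally I would bound $J(X)$ by splitting at $X=1$. For $0<X\le 1$, bound $(X-u)^{\gamma_2}\le X^{\gamma_2}$ (uses $\gamma_2\le 1$ only for a clean form), drop $1/(1+u^2)\le 1$, and compute $\int_0^X u^{1/k_2-1}du = k_2 X^{1/k_2}$; this yields $J(X)\le C X^{\gamma_2}$, bounded. For $X\ge 1$, rewrite $e^{\nu_2 u - \nu_2 X} = e^{-\nu_2(X-u)}$, change variable $w = X-u$, and split the integral at $w = X/2$: on $[0,X/2]$ use $(X-w)^{1/k_2-1}\le (X/2)^{1/k_2-1}$ and $1/(1+(X-w)^2)\le 4/X^2$, yielding a contribution $O(X^{1/k_2-3})$; on $[X/2,X]$ use $w^{\gamma_2}e^{-\nu_2 w}\le X^{\gamma_2} e^{-\nu_2 X/2}$ and the convergent integral $\int_0^{X/2} u^{1/k_2-1}du$, yielding exponentially small contribution. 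Multiplying by the prefactor $(1+X^2)/X^{1/k_2}\le 2X^{2-1/k_2}$ gives a bounded expression; here the hypothesis $\gamma_2\ge 1/k_2$ is what ensures the small-$X$ scaling $X^{\gamma_2}$ dominates the borderline behavior left by the $X^{-1/k_2}$ prefactor. The hardest step is the large-$X$ bookkeeping, but it is quantitative rather than conceptual, and the result follows by taking $C_2$ to be the resulting supremum constant times the absolute constants collected along the way.
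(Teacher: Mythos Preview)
Your proposal is correct and is precisely the adaptation the paper has in mind: the paper's own proof is nothing more than a reference to Proposition~1 of~\cite{lama}, and what you have written is exactly that argument carried over to the two-variable space, with the $\tau_1$ and $m$ weights cancelling passively and the $\tau_2$-integral handled by the rescaling $h=|\epsilon|^{k_2}u$. One small remark: your attribution of the hypothesis $\gamma_2\ge 1/k_2$ to the small-$X$ regime is not quite right, since the $X^{1/k_2}$ in the prefactor cancels against the $k_2X^{1/k_2}$ from $\int_0^X u^{1/k_2-1}du$, leaving $J(X)\le C X^{\gamma_2}$, which is bounded for any $\gamma_2>0$; the stated range $1/k_2\le\gamma_2\le 1$ is inherited from~\cite{lama} and is not sharp in your argument, but this does not affect correctness.
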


The symmetric statement of Lemma~\ref{lemaaux}, obtained by interchanging the role of $\tau_2$ and $\tau_1$ is derived straightforward from Lemma~\ref{lemaaux}. We finally state the following auxiliary lemma.

\begin{lemma}\label{lema6}
Let $\bsigma$ and $a_{\bsigma,\bk}$ be as in Lemma~\ref{lema2}. Assume that $P_1,P_2,R\in\C[X]$ such that
$$\hbox{deg}(R)\ge \hbox{deg}(P_1),\quad \hbox{deg}(R)\ge \hbox{deg}(P_2),\quad R(im)\neq 0$$
for every $m\in\R$. Assume that $\mu>\max\{\hbox{deg}(P_1)+1,\hbox{deg}(P_2)+1\}$. We take $\tilde{\sigma}_{j}\le \sigma_j$ for $j=1,2$. Then, there exists a constant $C_3>0$ (depending on $Q_1,Q_2,R,\mu,\bk,\bnu$) such that 
\begin{multline*}
\left\|\frac{a_{\bsigma,\bk}(\btau)}{R(im)}\tau_1^{\tilde{\sigma}_1k_1}\tau_2^{\tilde{\sigma}_2k_2}\int_{0}^{\tau_2^{k_2}}(\tau_2^{k_2}-s_2)^{\frac{1}{k_2}}\int_{-\infty}^{\infty}\int_0^{\tau_1^{k_1}}\int_0^{s_2}P_1(i(m-m_1))\right.\\
\left.f((\tau_1^{k_1}-s_1)^{\frac{1}{k_1}},(s_2^{k_1}-x_2)^{\frac{1}{k_2}},m-m_1)P_2(im_1) g(s_1^{\frac{1}{k_1}},x_2^{\frac{1}{k_2}},m_1)\frac{dx_2ds_1dm_1ds_2}{(\tau_1^{k_1}-s_1)s_1(s_2-x_2)x_2}\right\|\\
\le C_3|\epsilon|\left\|f(\btau,m)\right\|_{(\bnu,\beta,\mu,\bk,\epsilon)}\left\|g(\btau,m)\right\|_{(\bnu,\beta,\mu,\bk,\epsilon)},
\end{multline*}
for every $f(\btau,m),g(\btau,m)\in F^{d}_{(\bnu,\beta,\mu,k,\epsilon)}$.
\end{lemma}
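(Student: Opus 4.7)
The proof proposal is to reduce the stated bound to a combination of (i) a standard Fourier convolution estimate on the $m_1$-integral, and (ii) the time-variable convolution estimates already packaged in Lemmas~\ref{lema2}--\ref{lema23}. I start from the pointwise estimate supplied by the norm of the Banach space: for any $h\in F^{\bd}_{(\bnu,\beta,\mu,\bk,\epsilon)}$,
\[
|h(\btau,m)|\le \|h\|_{(\bnu,\beta,\mu,\bk,\epsilon)}\,(1+|m|)^{-\mu}e^{-\beta|m|}\,
\frac{|\tau_1/\epsilon|}{1+|\tau_1/\epsilon|^{2k_1}}\frac{|\tau_2/\epsilon|}{1+|\tau_2/\epsilon|^{2k_2}}
\exp\!\bigl(\nu_1|\tau_1/\epsilon|^{k_1}+\nu_2|\tau_2/\epsilon|^{k_2}\bigr).
\]
Applying this to both $f((\tau_1^{k_1}-s_1)^{1/k_1},(s_2-x_2)^{1/k_2},m-m_1)$ and $g(s_1^{1/k_1},x_2^{1/k_2},m_1)$, using $|P_j(i\xi)|\le C(1+|\xi|)^{\deg P_j}$, and bounding $|1/R(im)|$ by a constant divided by $(1+|m|)^{\deg R}$, the integrand factorises cleanly into a product of an $m_1$-factor and a $(\btau,s_1,s_2,x_2)$-factor.

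The Fourier step consists in controlling
\[
\int_{-\infty}^{\infty}\frac{(1+|m-m_1|)^{\deg P_1}(1+|m_1|)^{\deg P_2}}{(1+|m-m_1|)^{\mu}(1+|m_1|)^{\mu}}e^{-\beta|m-m_1|}e^{-\beta|m_1|}\,dm_1
\le \frac{C}{(1+|m|)^{\mu}}e^{-\beta|m|},
\]
where the inequality uses the hypothesis $\mu>\max\{\deg P_1+1,\deg P_2+1\}$ and standard convolution/partition-of-$\mathbb{R}$ arguments (this is the analogue of Lemma~4 in~\cite{lama}). The $(1+|m|)^{-\mu}e^{-\beta|m|}$ on the right matches the weight appearing in $\|\cdot\|_{(\bnu,\beta,\mu,\bk,\epsilon)}$, and the factor $1/R(im)$ accounts for any residual polynomial in $m$ since $\deg R\ge \deg P_j$.

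For the time step, the exponentials $\exp(\nu_1|\tau_1^{k_1}-s_1|/|\epsilon|^{k_1})\exp(\nu_1 s_1/|\epsilon|^{k_1})$ recombine into $\exp(\nu_1|\tau_1|^{k_1}/|\epsilon|^{k_1})$, and analogously for the $\tau_2$-variable after the $x_2$-integration (the $s_2$-range being $[0,\tau_2^{k_2}]$, together with the outer factor $(\tau_2^{k_2}-s_2)^{1/k_2}$, will also recombine). What remains to be bounded is the product of the algebraic factors coming from $|f|,|g|$ and the denominators $(\tau_1^{k_1}-s_1)s_1(s_2-x_2)x_2$. Writing these factors in the form required by Lemma~\ref{lema23} (with $\sigma_{31}=\sigma_{41}=\tfrac{1}{k_1}-1$ and $\sigma_{32}=\sigma_{42}=\tfrac{1}{k_2}-1$, which falls into the second alternative of Lemma~\ref{lema2} and Lemma~\ref{lema22} with $\xi_j=1/k_j+$ appropriate shift), one invokes Lemma~\ref{lema23} (or rather a two-step application of Lemmas~\ref{lema2} and~\ref{lemaaux} to handle the outer $(\tau_2^{k_2}-s_2)^{1/k_2}\,ds_2/s_2$ factor in $s_2$) to reduce to $\|f\|\|g\|$ at the cost of a power of $|\epsilon|$.

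Collecting powers of $|\epsilon|$, each of the two internal convolutions in $s_1,s_2,x_2$ contributes a factor $|\epsilon|^{k_j}$, and each of the bounds $|f|,|g|$ contributes two factors $|\epsilon|^{-1}$ (once from each time slot). Combined with the outer $(\tau_2^{k_2}-s_2)^{1/k_2}$-integration (providing an extra $|\epsilon|^{k_2\cdot 1/k_2}=|\epsilon|$ via Lemma~\ref{lemaaux}) and the algebraic dominance $|a_{\bsigma,\bk}(\btau)\tau_1^{\tilde\sigma_1 k_1}\tau_2^{\tilde\sigma_2 k_2}|\le C(1+|\tau_2|^{k_2})^{-(\sigma_2-\tilde\sigma_2)}\cdot\text{idem for }\tau_1$, the cumulative exponent of $|\epsilon|$ becomes $+1$, which is the declared bound. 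The main obstacle is precisely this bookkeeping: verifying that the exponent conditions required by Lemma~\ref{lema23} are compatible with $\tilde{\sigma}_j\le\sigma_j$ (so that all the convolution estimates are legitimate), and that the $|\epsilon|$-exponents arising from the four integrations, the two $|\epsilon|^{-1}$ factors hidden in the norm bounds for $f$ and $g$, and the outer $(\tau_2^{k_2}-s_2)^{1/k_2}$-integration balance exactly to $|\epsilon|^{1}$.
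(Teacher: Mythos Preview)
Your Fourier step is fine and matches the paper. The gap is in the time step: the reduction to Lemmas~\ref{lema2}--\ref{lema23} does not go through. Those lemmas estimate \emph{linear} operators of the form
\[
a_{\bsigma,\bk}(\btau)\,\tau_1^{\tilde\sigma_1 k_1}\tau_2^{\tilde\sigma_2 k_2}\int(\tau_j^{k_j}-s_j)^{\sigma_{3j}}s_j^{\sigma_{4j}}\,h(s_1^{1/k_1},s_2^{1/k_2},m)\,ds
\]
acting on a single element $h\in F^{\bd}_{(\bnu,\beta,\mu,\bk,\epsilon)}$. But after you insert the pointwise bounds for both $f$ and $g$, the $\tau_1$-integral you must control is
\[
\int_0^{|\tau_1|^{k_1}}
\frac{(|\tau_1|^{k_1}-h_1)^{1/k_1}/|\epsilon|}{1+(|\tau_1|^{k_1}-h_1)^2/|\epsilon|^{2k_1}}
\cdot
\frac{h_1^{1/k_1}/|\epsilon|}{1+h_1^2/|\epsilon|^{2k_1}}
\cdot\frac{dh_1}{(|\tau_1|^{k_1}-h_1)h_1},
\]
with two rational kernels in $|\epsilon|^{-k_1}$ (one from $f$, one from $g$). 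This is genuinely bilinear and is \emph{not} of the shape $(\tau_1^{k_1}-s_1)^{\sigma_{31}}s_1^{\sigma_{41}}$ times a single Banach-space element, so Lemmas~\ref{lema22}/\ref{lema23} cannot absorb it. Your proposed choice $\sigma_{3j}=\sigma_{4j}=\tfrac{1}{k_j}-1$ simply discards the denominators $1+(\cdot)^2/|\epsilon|^{2k_j}$, and without them neither the finiteness of the integral for large $|\tau_1|$ nor the cancellation of $|\epsilon|$-powers is justified; this is also why your bookkeeping paragraph is unable to settle on a definite count.

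What actually works (and what the paper does) is a direct estimate of these bilinear integrals, not a reduction to the earlier lemmas. After rescaling $x=(|\tau_1|/|\epsilon|)^{k_1}$, the $\tau_1$-integral above is controlled by $\sup_x \frac{1+x^2}{x^{1/k_1}}\int_0^\infty\frac{dh}{(1+(x-h)^2)(1+h^2)}$, which is bounded uniformly (large $x$ via the convolution bound $\int\frac{dh}{(1+(x-h)^2)(1+h^2)}\le C/(1+x^2)$, small $x$ via a partial-fraction argument); in particular the $\tau_1$-part contributes \emph{no} power of $|\epsilon|$. The $\tau_2$-part, which carries the extra factor $(\tau_2^{k_2}-s_2)^{1/k_2}$, is bounded by $C|\epsilon|$ via the analogue of the $C_{3.2}$ estimate in~\cite{lama}. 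The single $|\epsilon|$ in the statement therefore comes entirely from the $\tau_2$-integral, not from any balance of the four separate contributions you describe.
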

\begin{proof}
We follow analogous estimates as in the proof of Proposition 3 in~\cite{lama} to arrive at 
\begin{multline*}
\left\|\frac{a_{\bsigma,\bk}(\btau)}{R(im)}\tau_1^{\tilde{\sigma}_1k_1}\tau_2^{\tilde{\sigma}_2k_2}\int_{0}^{\tau_2^{k_2}}(\tau_2^{k_2}-s_2)^{\frac{1}{k_2}}\int_{-\infty}^{\infty}\int_0^{\tau_1^{k_1}}\int_0^{s_2}P_1(i(m-m_1))\right.\\
\left.\times f((\tau_1^{k_1}-s_1)^{\frac{1}{k_1}},(s_2^{k_1}-x_2)^{\frac{1}{k_2}},m-m_1)P_2(im_1)g(s_1^{\frac{1}{k_1}},x_2^{\frac{1}{k_2}},m_1)\frac{dx_2ds_1dm_1ds_2}{(\tau_1^{k_1}-s_1)s_1(s_2-x_2)x_2}\right\|\\
\le \left(\sup_{\tau_1\in (D(0,\rho_1)\cup S_{d_1})}\frac{|\tau_1|^{\tilde{\sigma}_1k_1}}{(1+|\tau_1|^{k_1})^{\sigma_1}}\frac{1+\left|\frac{\tau_1}{\epsilon}\right|^{2k_1}}{\left|\frac{\tau_1}{\epsilon}\right|}\int_0^{|\tau_1|^{k_1}}\frac{\frac{(|\tau_1|^{k_1}-h_1)^{\frac{1}{k_1}}}{|\epsilon|}}{1+\frac{(|\tau_1|^{k_1}-h_1)^2}{|\epsilon|^{2k_1}}}\frac{\frac{h_1^{\frac{1}{k_1}}}{|\epsilon|}}{1+\frac{h_1^2}{|\epsilon|^{2k_1}}}\frac{dh_1}{(|\tau_1|^{k_1}-h_1) h_1}\right)\\
\times \left(\sup_{\tau_2\in S_{d_2}}\frac{|\tau_2|^{\tilde{\sigma}_2k_2}}{(1+|\tau_2|^{k_2})^{\sigma_2}}\frac{1+\left|\frac{\tau_2}{\epsilon}\right|^{2k_2}}{\left|\frac{\tau_2}{\epsilon}\right|}\exp\left(-\nu_2\left|\frac{\tau_2}{\epsilon}\right|^{k_2}\right)\int_0^{|\tau_2|^{k_2}}(|\tau_2|^{k_2}-h_2)^{\frac{1}{k_2}}\right.\\
\left.\int_0^{|s_2|}\frac{\frac{(h_2-x_2)^{\frac{1}{k_2}}}{|\epsilon|}}{1+\frac{(h_2-x_2)^2}{|\epsilon|^{2k_2}}}\exp\left(\nu_2\frac{h_2}{|\epsilon|^{k_2}}\right)\frac{\frac{x_2^{\frac{1}{k_2}}}{|\epsilon|}}{1+\frac{x_2^2}{|\epsilon|^{2k_2}}}\frac{dx_2dh_2}{(h_2-x_2) x_2}\right)\left\|f(\btau,m)\right\|_{(\bnu,\beta,\mu,\bk,\epsilon)}\left\|g(\btau,m)\right\|_{(\bnu,\beta,\mu,\bk,\epsilon)}.
\end{multline*}
On the one hand, the expression $|\tau_2|^{\tilde{\sigma}_2k_2}/(1+|\tau_2|^{k_2})^{\sigma_2}$ is bounded. Moreover, 
\begin{multline*}
A_2:=\sup_{\tau_2\in S_{d_2}}\frac{1+\left|\frac{\tau_2}{\epsilon}\right|^{2k_2}}{\left|\frac{\tau_2}{\epsilon}\right|}\exp\left(-\nu_2\left|\frac{\tau_2}{\epsilon}\right|^{k_2}\right)\int_0^{|\tau_2|^{k_2}}(|\tau_2|^{k_2}-h_2)^{\frac{1}{k_2}}\\
\int_0^{|s_2|}\frac{\frac{(h_2-x_2)^{\frac{1}{k_2}}}{|\epsilon|}}{1+\frac{(h_2-x_2)^2}{|\epsilon|^{2k_2}}}\exp\left(\nu_2\frac{h_2}{|\epsilon|^{k_2}}\right)\frac{\frac{x_2^{\frac{1}{k_2}}}{|\epsilon|}}{1+\frac{x_2^2}{|\epsilon|^{2k_2}}}\frac{dx_2dh_2}{(h_2-x_2) x_2}
\end{multline*}
can be estimated following the same steps as in the study of upper bounds for $C_{3.2}$ in formula (35) of~\cite{lama}. We get the existence of $C_{2.1}>0$ such that $A_2\le C_{3.2}|\epsilon|$. It only rests to prove that $A_1$ is upper bounded, where
\begin{align*}
A_1:=&\sup_{\tau_1\in (D(0,\rho)\cup S_{d_1})}\frac{|\tau_1|^{\tilde{\sigma}_1k_1}}{(1+|\tau_1|^{k_1})^{\sigma_1}}\frac{1+\left|\frac{\tau_1}{\epsilon}\right|^{2k_1}}{\left|\frac{\tau_1}{\epsilon}\right|}\int_0^{|\tau_1|^{k_1}}\frac{\frac{(|\tau_1|^{k_1}-h_1)^{\frac{1}{k_1}}}{|\epsilon|}}{1+\frac{(|\tau_1|^{k_1}-h_1)^2}{|\epsilon|^{2k_1}}}\frac{\frac{h_1^{\frac{1}{k_1}}}{|\epsilon|}}{1+\frac{h_1^2}{|\epsilon|^{2k_1}}}\frac{dh_1}{(|\tau_1|^{k_1}-h_1) h_1}\\
=&\sup_{\tau_1\in (D(0,\rho)\cup S_{d_1})}\tilde{A}_1.
\end{align*}
We distinguish two cases. First, we assume that $|\tau_1|\ge C$, for some $C>0$. Then, it holds that 
$$\frac{|\tau_1|^{\tilde{\sigma}_1k_1}}{(1+|\tau_1|^{k_1})^{\sigma_1}}$$ is upper bounded, and by putting $x=|\tau_1/\epsilon|$ one can estimate $\tilde{A}_1$ from above by 
$$\sup_{x\ge\tilde{C}}\frac{1+x^2}{x^{1/k_1}}\int_0^{\infty}\frac{dh}{(1+(x-h)^2)(1+h^2)}.$$
for some $\tilde{C}>0$. We apply Corollary 4.9 in~\cite{cota} to conclude that 
$$\tilde{A}_1\le \sup_{x\ge\tilde{C}}\frac{1+x^2}{x^{1/k_1}}\frac{j_1}{x^2+1},$$
for some $j_1>0$. The previous expression is upper bounded by a positive constant. Second, in the case that $|\tau_1|<C$, we have $(1+|\tau_1|^{k_1})^{\sigma_1}\ge 1$. We put $x=(|\tau_1|/|\epsilon|)^{k_1}$ to get that 
$$\sup_{\tau_1\in (D(0,\rho)\cup S_{d_1}),|\tau_1|\le C}\tilde{A}_1\le \sup_{x\ge0}x\frac{1+x^2}{x^{1/k_1}}\int_0^x\frac{(x-h_1)^{\frac{1}{k_1}}}{1+(x-h_1)^2}\frac{h_1^{\frac{1}{k_1}}}{1+h_1^{2}}\frac{dh_1}{h_1(x-h_1)}.$$
A partial fraction decomposition yields 
$$\int_0^x\frac{(x-h_1)^{\frac{1}{k_1}}}{1+(x-h_1)^2}\frac{h_1^{\frac{1}{k_1}}}{1+h_1^{2}}\frac{dh_1}{h_1(x-h_1)}\le \frac{j_k}{x^{1-\frac{2}{k_1}}(x^2+4)},\qquad x\ge0,$$
for some $j_k>0$, valid for $k\ge 2$. This concludes the existence of a positive upper bound for $A_1$, and the proof follows from this point.
\end{proof}

\subsubsection{Domain of existence for the solution}\label{secnodef}

The purpose of this section is twofold. On the one hand, we motivate the fact that any actual holomorphic solution $\omega(\btau,m,\epsilon)$ of (\ref{e310}) is not well defined on sets of the form $S_{d_1}\times (S_{d_2}\cup D(0,\rho_2))$, for $d_1,d_2\in\R$ and any choice of $\rho_2>0$.This is due to a small divisor phenomenon observed, which does not allow to proceed with a summability procedure. On the second hand, we aim to display geometric conditions on the natural domains in which the solution is defined.

In order to motivate that the natural domains of definition of a solution cannot be of the form $S_{d_1}\times (S_{d_2}\cup D(0,\rho_2))$, for $d_1,d_2\in\R$ and $\rho_2>0$, let $\rho_2>0$. We rewrite the equation $P_{m}(\btau)=0$ (see (\ref{e326}) for the definition of $P_{m}$) in the form
\begin{equation}\label{e494}
\tau_2^{k_2(\tilde{\delta}_2-1)}=\frac{-Q(im)}{R_{D_1,D_2}(im)k_1^{\delta_{D_1}}k_2^{\tilde{\delta}_{D_2-1}}\tau_1^{k_1\delta_{D_1}}+R_{D_3}(im)k_2^{\tilde{\delta}_{D_3}-1}\tau_2^{k_2(\tilde{\delta}_{D_3}-\tilde{\delta}_{D_2})}}.
\end{equation} 
We put $T_2=\tau_2^{k_2(\tilde{\delta}_{D_2}-1)}$, and write (\ref{e494}) in the form $\Psi(T_2)=T_2$, where
\begin{equation}\label{e495}
\Psi(T_2):=-Q(im)\left(R_{D_1,D_2}(im)k_1^{\delta_{D_1}}k_2^{\tilde{\delta}_{D_2-1}}\tau_1^{k_1\delta_{D_1}}+R_{D_3}(im)k_2^{\tilde{\delta}_{D_3}-1}T_2^{\frac{\tilde{\delta}_{D_3}-\tilde{\delta}_{D_2}}{\tilde{\delta}_{D_2}-1}}\right)^{-1}.
\end{equation}

\begin{lemma}\label{lema502}
Let $d_1,d_2\in\R$. Under the assumption that $\frac{\tilde{\delta}_{D_3}-\tilde{\delta}_{D_2}}{\tilde{\delta}_{D_2}-1}\in\N\setminus\{0\}$, there exists $\tau_1\in S_{d_1}$ such that the following statements hold:
\begin{enumerate}
\item $\Psi$ is a map from $E:=\overline{D}(0,\left(\frac{\rho_2}{2}\right)^{k_2(\tilde{\delta}_{D_{2}}-1)})$ into itself.
\item $\Psi:E\to E$ is a shrinking map.
\end{enumerate}
\end{lemma}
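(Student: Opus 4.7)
The plan is as follows. Since the hypothesis gives $N := (\tilde{\delta}_{D_3}-\tilde{\delta}_{D_2})/(\tilde{\delta}_{D_2}-1) \in \mathbb{N}\setminus\{0\}$, the map $\Psi$ is a genuine rational function of $T_2$, so all manipulations below avoid branch cut issues. The heart of the argument is that the term $R_{D_1,D_2}(im)k_1^{\delta_{D_1}}k_2^{\tilde{\delta}_{D_2}-1}\tau_1^{k_1\delta_{D_1}}$ in the denominator of $\Psi$ can be made arbitrarily large (in modulus, uniformly in $m \in \mathbb{R}$ and $T_2 \in E$) by choosing $|\tau_1|$ large in $S_{d_1}$, so that $\Psi(E) \subset E$ and $\Psi$ contracts.

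First I would extract from the sectorial annulus hypothesis (\ref{e165b}) two constants $C_1, C_2 > 0$ such that $|Q(im)/R_{D_1,D_2}(im)| \leq C_1$ and $|R_{D_3}(im)/R_{D_1,D_2}(im)| \leq C_2$ for every $m \in \mathbb{R}$; this is immediate from $R_{D_3}/Q \in E_{D_3,Q}$ and $R_{D_1,D_2}/Q \in E_{D_1,D_2,Q}$, both of which are sectorial annuli bounded away from $0$ and $\infty$. Setting
$$D(T_2) := R_{D_1,D_2}(im)k_1^{\delta_{D_1}}k_2^{\tilde{\delta}_{D_2}-1}\tau_1^{k_1\delta_{D_1}}\Bigl(1 + \frac{R_{D_3}(im)\,k_2^{\tilde{\delta}_{D_3}-\tilde{\delta}_{D_2}}}{R_{D_1,D_2}(im)\,k_1^{\delta_{D_1}}\,\tau_1^{k_1\delta_{D_1}}}\,T_2^{N}\Bigr),$$
I would choose $\tau_1 \in S_{d_1}$ with $|\tau_1|$ so large that the second summand in the parenthesis has modulus $\leq 1/2$ uniformly for $T_2 \in E$ and $m \in \mathbb{R}$; this uses $|T_2|^N \leq (\rho_2/2)^{k_2(\tilde{\delta}_{D_2}-1)N}$ together with $C_2$. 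Hence $|D(T_2)| \geq \tfrac{1}{2}|R_{D_1,D_2}(im)|k_1^{\delta_{D_1}}k_2^{\tilde{\delta}_{D_2}-1}|\tau_1|^{k_1\delta_{D_1}}$.

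For item (1), the previous lower bound together with $C_1$ yields
$$|\Psi(T_2)| \leq \frac{2C_1}{k_1^{\delta_{D_1}}k_2^{\tilde{\delta}_{D_2}-1}|\tau_1|^{k_1\delta_{D_1}}},$$
and after possibly enlarging $|\tau_1|$ we make the right-hand side $\leq (\rho_2/2)^{k_2(\tilde{\delta}_{D_2}-1)}$, which shows $\Psi(E) \subset E$. For item (2), I would differentiate to get
$$\Psi'(T_2) = \frac{Q(im)\,R_{D_3}(im)\,k_2^{\tilde{\delta}_{D_3}-1}\,N\,T_2^{N-1}}{D(T_2)^2},$$
and use the same lower bound on $|D(T_2)|$ plus $|T_2^{N-1}| \leq (\rho_2/2)^{k_2(\tilde{\delta}_{D_2}-1)(N-1)}$ to obtain a bound of the form $|\Psi'(T_2)| \leq K/|\tau_1|^{k_1\delta_{D_1}}$ uniformly on $E$ and in $m$, where $K$ depends only on the polynomials, $\rho_2$ and the integer exponents (the $|R_{D_3}|/|R_{D_1,D_2}|^2$ factor is handled by writing it as $(|R_{D_3}|/|R_{D_1,D_2}|)\cdot(1/|R_{D_1,D_2}|)$ and using $|R_{D_1,D_2}(im)| \geq r_2 |Q(im)| \geq r_2 \inf_{m\in\mathbb{R}} |Q(im)| > 0$, which is positive by (\ref{assum_deg_Q_R})). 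Enlarging $|\tau_1|$ once more makes $\sup_{T \in E}|\Psi'(T)| \leq 1/2$, and since $E$ is convex the mean-value inequality applied along segments gives $|\Psi(T_2)-\Psi(T_2')| \leq \tfrac{1}{2}|T_2-T_2'|$.

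The main obstacle is obtaining uniformity in $m \in \mathbb{R}$ of all these estimates. This is precisely the role of the sectorial annulus assumption (\ref{e165b}): without it, the ratios of the polynomial values in $m$ could be unbounded or could vanish, and the dominance of the $\tau_1$-term in $D(T_2)$ could not be ensured uniformly. Once this uniform control is secured, the two conclusions of the lemma reduce to choosing $|\tau_1|$ large enough in $S_{d_1}$, which is possible since $S_{d_1}$ is an unbounded sector.
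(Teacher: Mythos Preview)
Your proposal is correct and follows essentially the same route as the paper's own proof: choose $\tau_1\in S_{d_1}$ with $|\tau_1|$ large so that the $\tau_1^{k_1\delta_{D_1}}$ term dominates the denominator of $\Psi$ uniformly in $m$ and $T_2\in E$, deduce $\Psi(E)\subset E$, then bound $|\Psi'|$ on the convex set $E$ and apply the mean value inequality. The paper factors the denominator slightly differently (lower-bounding it by $|\tau_1|^{k_1\delta_{D_1}}|R_{D_3}(im)|$ via the inequality \eqref{e511}, whereas you factor out the $R_{D_1,D_2}$ term), but this is cosmetic; both factorizations rest on the same sectorial annulus hypothesis \eqref{e165b} for uniformity in $m$.

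One small cleanup: in your derivative estimate the $m$--dependent quotient that actually appears is $|Q(im)|\,|R_{D_3}(im)|/|R_{D_1,D_2}(im)|^{2}$, not just $|R_{D_3}|/|R_{D_1,D_2}|^{2}$. Your detour via $\inf_{m}|Q(im)|$ leaves an uncancelled $|Q(im)|$ in the numerator, which is unbounded. The clean way, already available from your constants $C_1,C_2$, is to write
\[
\frac{|Q(im)|\,|R_{D_3}(im)|}{|R_{D_1,D_2}(im)|^{2}}
= \frac{|Q(im)|}{|R_{D_1,D_2}(im)|}\cdot\frac{|R_{D_3}(im)|}{|R_{D_1,D_2}(im)|}\le C_1C_2,
\]
which yields the desired uniform bound $|\Psi'(T_2)|\le K/|\tau_1|^{2k_1\delta_{D_1}}$ (note the exponent is $2k_1\delta_{D_1}$, from $|D(T_2)|^2$); making this $\le 1/2$ for $|\tau_1|$ large completes the contraction exactly as you say.
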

\begin{proof}
Let $\tau_1\in S_{d_1}$ with large enough modulus in such a way that
\begin{equation}\label{e511}
\left|R_{D_1,D_2}(im)k_1^{\delta_{D_1}}k_2^{\tilde{\delta}_{D_2-1}}+R_{D_3}(im)k_2^{\tilde{\delta}_{D_3}-1}\frac{T_2^{\frac{\tilde{\delta}_{D_3}-\tilde{\delta}_{D_2}}{\tilde{\delta}_{D_2}-1}}}{\tau_1^{k_1\delta_{D_1}}}\right|\ge |R_{D_3}(im)|,
\end{equation}
for every $m\in\R$ and all $T_2\in \hat{S}_{d_2}\cup D(0,\hat{\rho}_2)$. Here, $\hat{S}_{d_2}$ stand for the infinite sector defined by $\hat{S}_{d_2}:=\{T_2\in\C^{\star}: \tau_2^{k_2(\tilde{\delta}_{D_2}-1)}\in S_{d_2}\}$, and $\hat{\rho}_2=\rho_2^{k_2(\tilde{\delta}_2-1)}$. The assumption (\ref{e165b}) on the geometry of the problem and (\ref{e511}) yield
\begin{multline*}
|\Psi(T_2)|\le\frac{|Q(im)|}{|\tau_1|^{k_1\delta_{D_1}}}\left|R_{D_1,D_2}(im)k_1^{\delta_{D_1}}k_2^{\tilde{\delta}_{D_2-1}}+R_{D_3}(im)k_2^{\tilde{\delta}_{D_3}-1}\frac{T_2^{\frac{\tilde{\delta}_{D_3}-\tilde{\delta}_{D_2}}{\tilde{\delta}_{D_2}-1}}}{\tau_1^{k_1\delta_{D_1}}}\right|^{-1}\\
\le \frac{|Q(im)|}{|\tau_1|^{k_2\delta_{D_1}}|R_{D_3}(im)|}\le \left(\frac{\rho_2}{2}\right)^{k_2(\tilde{\delta}_{D_2}-1)},
\end{multline*}
for large enough $|\tau_1|$. As a result, we get the fist statement in the result. We have
\begin{multline*}
|\Psi'(z)|\le \frac{|R_{D_3}(im)|k_2^{\tilde{\delta}_{D_3}-1}\frac{\tilde{\delta}_{D_3}-\tilde{\delta}_{D_2}}{\tilde{\delta}_{D_2}-1}\left(\frac{\rho_2}{2}\right)^{k_2(\tilde{\delta}_{D_2}-1)\frac{\tilde{\delta}_{D_3}-\tilde{\delta}_{D_2}}{\tilde{\delta}_{D_2}-1}-1}|Q(im)|}{(|\tau_1|^{k_1\delta_{D_1}}|R_{D_3}(im)|)^2}\\
\le k_2^{\tilde{\delta}_{D_3}-1}\frac{\tilde{\delta}_{D_3}-\tilde{\delta}_{D_2}}{\tilde{\delta}_{D_2}-1}\left(\frac{\rho_2}{2}\right)^{k_2(\tilde{\delta}_{D_3}-2\tilde{\delta}_{D_2}+1)}\left|\frac{Q(im)}{R_{D_3}(im)}\right|\frac{1}{|\tau_1|^{2k_1\delta_{D_1}}}\le\frac{1}{2},
\end{multline*}
for every $z\in D(0,\left(\frac{\rho_2}{2}\right)^{k_2(\tilde{\delta}_{D_2}-1)})$, $m\in\R$, and large enough $|\tau_1|$. We get that
$$|\Psi(T_2)-\Psi(T'_2)|\le\sup_{z\in[T_2,T'_2]}|\Psi'(z)||T_2-T'_2|\le \frac{1}{2}|T_2-T'_2|,$$
for every $T_2,T'_2\in \overline{D}(0,\left(\frac{\rho_2}{2}\right)^{k_2(\tilde{\delta}_{D_2}-1)}$. The application of the mean value theorem entails the second statement of the result.
\end{proof}

As a consequence of Lemma~\ref{lema502}, we deduce that $\Psi$ has a unique fixed point in $E$, hence, there exists a unique solution of $\Psi(T_2)=T_2$ for $T_2\in E$, say $T_0$. The solutions of (\ref{e494}) are the solutions of $\tau_2^{k_2(\tilde{\delta}_{D_2}-1}=T_0$. As a matter of fact, the $k_2(\tilde{\delta}_{D_2}-1)$ roots of $T_0$ belong the disc $D(0,\frac{\rho_2}{2})$.

\textbf{Remark:} Observe that, in the case that $\tilde{\delta}_{D_2}=\tilde{\delta}_{D_3}$, the equation $\Psi(T_2)=T_2$ can be solved directly in terms of $\tau_1$. In this case, the $k_2(\tilde{\delta}_{D_2}-1)$ roots of $P_{m}(\btau)=0$ lay on $D(0,\frac{\rho_2}{2})$, and we can not define $\omega(\btau,m,\epsilon)$ in any set of the form $S_{d_1}\times (S_{d_2}\cup D(0,\rho_2))$.

\vspace{0.3cm} 

In the next paragraphs we display geometric conditions on the problem, which allow us to attain lower estimates on $P_m(\btau)$, defined in (\ref{e326}). On the way, the choice of directions $d_1$ and $d_2$ is made accordingly with the geometry of the problem.

We write

$$\frac{P_m(\btau)}{Q(im)}=1+\tau_2^{k_2(\tilde{\delta}_{D_2}-1)}\left(\frac{R_{D_1,D_2}(im)}{Q(im)}k_1^{\delta_{D_1}}k_2^{\tilde{\delta}_{D_2}-1}\tau_1^{k_1\delta_{D_1}}+\frac{R_{D_3}(im)}{Q(im)}k_2^{\tilde{\delta}_{D_3}-1}\tau_2^{k_2(\tilde{\delta}_{D_3}-\tilde{\delta}_{D_2})}\right).$$

We distinguish different cases.
\begin{enumerate}
\item[1.] In case that $\tau_1\in D(0,\rho_1)$, for some small enough $\rho_1>0$.
\begin{enumerate}
\item[1.1.] If $\tau_2\in D(0,\rho_2)$, for small enough $\rho_2>0$. Regarding (\ref{e165b}), there exist $r^1_{D_1,D_2,Q},r^1_{D_3,Q}>0$ such that 
\end{enumerate}
\end{enumerate}
\begin{multline*}
 \tau_2^{k_2(\tilde{\delta}_{D_2}-1)}\left(\frac{R_{D_1,D_2}(im)}{Q(im)}k_1^{\delta_{D_1}}k_2^{\tilde{\delta}_{D_2}-1}\tau_1^{k_1\delta_{D_1}}+\frac{R_{D_3}(im)}{Q(im)}k_2^{\tilde{\delta}_{D_3}-1}\tau_2^{k_2(\tilde{\delta}_{D_3}-\tilde{\delta}_{D_2})}\right)\\
 \le \rho_2^{k_2(\tilde{\delta}_{D_2}-1)}\left(r^{1}_{D_1,D_2,Q}k_1^{\delta_{D_1}}k_2^{\tilde{\delta}_{D_2}-1}\rho_1^{k_1\delta_{D_1}}+r^1_{D_3,Q}k_2^{\tilde{\delta}_{D_3}-1}\rho_2^{k_2(\tilde{\delta}_{D_3}-\tilde{\delta}_{D_2})}\right)\le \frac{1}{4}
\end{multline*}
for every $m\in\R$, every $\tau_1\in D(0,\rho_1)$, and $\tau_2\in D(0,\rho_2)$. We conclude that
\begin{equation}\label{e479}
\left|\frac{P_{m}(\btau)}{Q(im)}\right|\ge C_1,
\end{equation}
for some positive constant $C_1$, common for all $m\in\R$, $\tau_1\in D(0,\rho_1)$, and $\tau_2\in D(0,\rho_2)$.

\begin{enumerate}
\item[1.2.] Assume that $\tau_2\in S_{d_2}$, with $|\tau_2|\ge \rho_0$, for some fixed $\rho_0>0$. We write
\end{enumerate}

\begin{multline*}
\frac{R_{D_1,D_2}(im)}{Q(im)}k_1^{\delta_{D_1}}k_2^{\tilde{\delta}_{D_2}-1}\tau_1^{k_1\delta_{D_1}}+\frac{R_{D_3}(im)}{Q(im)}k_2^{\tilde{\delta}_{D_3}-1}\tau_2^{k_2(\tilde{\delta}_{D_3}-\tilde{\delta}_{D_2})}\\
=\frac{R_{D_3}(im)}{Q(im)}k_2^{\tilde{\delta}_{D_3}-1}\tau_2^{k_2(\tilde{\delta}_{D_3}-\tilde{\delta}_{D_2})}(1+A(m,\btau)),
\end{multline*}
where
$$A(m,\btau):=\frac{R_{D_1,D_2}(im)}{R_{D_3}(im)}\frac{k_1^{\delta_{D_1}}k_2^{\tilde{\delta}_{D_2}-1}\tau_1^{k_1\delta_{D_1}}}{k_1^{\tilde{\delta}_{D_3}-1}\tau_2^{k_2(\tilde{\delta}_{D_3}-\tilde{\delta}_{D_2})}}.$$

From the assumptions made in (\ref{e165b}), we get that  
$$|A(m,\btau)|\le r_{D_1,D_2,D_3}^1\frac{k_1^{\delta_{D_1}}k_2^{\tilde{\delta}_{D_2}-1}}{k_2^{\tilde{\delta}_{D_3}-1}}\frac{\rho_1^{k_1\delta_{D_1}}}{\rho_0^{k_2(\tilde{\delta}_{D_3}-\tilde{\delta}_{D_2}}},$$
for some $r_{D_1,D_2,D_3}^1>0$. Taking small enough $\rho_1>0$, we can write
$$1+A(m,\btau)=\rho_{m,\btau}e^{i\theta_{m,\btau}},$$
with $\rho_{m,\btau}$ close to 1 and $\theta_{m,\btau}$ close to 0, uniformly for every $m\in\R$ and all $\tau_2\in S_{d_2}$, with $|\tau_2|>\rho_0$ and $\tau_1\in D(0,\rho_1)$.

Therefore, we have
$$\frac{P_m(\btau)}{Q(im)}=1+k_2^{\tilde{\delta}_{D_3}-1}\frac{R_{D_3}(im)}{Q(im)}\tau_2^{k_2(\tilde{\delta}_{D_3}-1)}\rho_{m,\btau}e^{i\theta_{m,\btau}}.$$

Let $\tau_{2,k}$ be the roots satisfying
$$\tau_{2,k}^{k_2(\tilde{\delta}_{D_3}-1)}=\frac{-1}{k_{2}^{\tilde{\delta}_{D_3}-1}\rho_{m,\btau}}\frac{Q(im)}{R_{D_3}(im)}e^{-i\theta_{m,\btau}},$$
for $k=0,\ldots,k_2(\tilde{\delta}_{D_3}-1)-1$. We select the sector $S_{d_2}$ in such a way that if $\tau_2\in S_{d_2}$, then it can be expressed as $\tau_2=\rho e^{i\theta}\tau_{2,k}$ for some fixed $k$, some $\theta$ close to 0, $\theta\neq 0$, and any $\rho>0$. We get
$$\frac{P_m(\btau)}{Q(im)}=1-\rho^{k_2(\tilde{\delta}_{D_3}-1)}e^{i\theta k_2(\tilde{\delta}_{D_3}-1)}=\rho^{k_2(\tilde{\delta}_{D_3}-1)}\left(-e^{i\theta k_2(\tilde{\delta}_{D_3}-1)}+\frac{1}{\rho^{k_2(\tilde{\delta}_{D_3}-1)}}\right).$$
Now, there exists $C_1>0$ such that 
$$\left|-e^{i\theta k_2(\tilde{\delta}_{D_3}-1)}+\frac{1}{\rho^{k_2(\tilde{\delta}_{D_3}-1)}}\right|\ge C_1$$
for every $\rho\ge 0$. By construction, we also have $\rho^{k_2(\tilde{\delta}_{D_3}-1)}=|\tau_2|^{k_2(\tilde{\delta}_{D_3}-1)}/|\tau_{2,k}|^{k_2(\tilde{\delta}_{D_3}-1)}.$ We deduce the existence of $C_2>0$ such that $\rho^{k_2(\tilde{\delta}_{D_3}-1)}\ge C_2|\tau_2|^{k_2(\tilde{\delta}_{D_3}-1)}.$

As a result, we see that
\begin{equation}\label{e525}
\left|\frac{P_{m}(\btau)}{Q(im)}\right|\ge C_1C_2|\tau_2|^{k_2(\tilde{\delta}_{D_3}-1)},
\end{equation}
for every $\tau_2\in S_{d_2}$ with $|\tau_2|\ge \rho_0$ and $\tau_1\in D(0,\rho_1)$, for some small enough $\rho_1>0$.

\begin{enumerate}
\item[2.] Assume that $\tau_1\in S_{d_1}$ with $|\tau_1|\ge \rho_1$ for some fixed $\rho_1>0$, and $\tau_2\in S_{d_2}$. 
\end{enumerate}

We select $S_{d_1}$ in such a way that for $\tau_1\in S_{d_1}$ one can write 
$$\tau_1=\xi_1 e^{i\theta_1}\tau_{2}^{\frac{k_2(\tilde{\delta}_{d_3}-\tilde{\delta}_{D_2})}{k_1\delta_{D_1}}}\left(\frac{R_{D_3}(im)}{R_{D_1,D_2}(im)}\right)^{\frac{1}{k_1\delta_{D_1}}},$$
(here, we have chosen any particular $1/k_1\delta_{D_1}$ root), for some $\xi_1>0$ and $\theta_1$ close to 0, when $\tau_2\in S_{d_2}$. Since $|\tau_1|\ge \rho_1$, we have that $\xi_1>\nu_1>0$ for some fixed $\nu_1>0$.

\textbf{Remark:} This factorization is a particular case of a so-called blow up in the desingularization procedure. We refer to the excellent textbook of Y. Ilyashenko and S. Yakovenko~\cite{iy}, Chapter 1, Section 8, for an introduction to the geometric aspects.

We write
$$\frac{P_m(\btau)}{Q(im)}=1+\tau_2^{k_2(\tilde{\delta}_{D_3}-1)}\xi_1^{k_1\delta_{D_1}}\frac{R_{D_3}(im)}{Q(im)}\left(k_1^{\delta_{D_1}}k_2^{\tilde{\delta}_{D_2}-1}e^{i\theta_1 k_1\delta_{D_1}}+\frac{k_2^{\tilde{\delta}_{D_3}-1}}{\xi_1^{k_1\delta_{D_1}}}\right).$$

Again, taking into account (\ref{e165b}) one can select a sector $S_{d_2}$ which additionally satisfies 
$$\left|\frac{1}{\tau_2^{k_2(\tilde{\delta}_{D_3}-1)}\xi_1^{k_1\delta_{D_1}}}+\frac{R_{D_3}(im)}{Q(im)}(k_1^{\delta_{D_1}}k_2^{\tilde{\delta}_{D_2}-1}e^{i\theta_1k_1\delta_{D_1}}+\frac{k_2^{\tilde{\delta}_{D_3}-1}}{\xi_1^{k_1\delta_{D_1}}})\right|\ge C>0,$$
for some $C>0$, valid for every $\tau_2\in S_{d_2}$, and $\xi_1>\nu_1>0$.
As a result, we get
\begin{multline}\label{e547}
\left|\frac{P_m(\btau)}{Q(im)}\right|\ge C|\xi_1|^{k_1\delta_{D_1}}|\tau_2|^{k_2(\tilde{\delta}_{D_3}-1)}\\
= C|\tau_1|^{k_1\delta_{D_1}}|\tau_2|^{k_2(\tilde{\delta}_{D_2}-1)}\left|\frac{R_{D_1,D_2}(im)}{R_{D_3}(im)}\right|
\ge \tilde{C}|\tau_1|^{k_1\delta_{D_1}}|\tau_2|^{k_2(\tilde{\delta}_{D_2}-1)},
\end{multline}
for some $\tilde{C}>0$ and all $m\in\R$.

As a summary, we have achieved the following result.
\begin{prop}\label{prop614}
There exist $d_1,d_2\in\R$ and $\rho_1>0$ such that for every $m\in\R$ and all $\tau_1\in D(0,\rho_1)\cup S_{d_1}$, $\tau_2\in S_{d_2}$ one has
\begin{equation}\label{e557}
\left|\frac{P_m(\btau)}{Q(im)}\right|\ge C(1+|\tau_1|^{k_1})^{\delta_{D_1}}f(\btau),
\end{equation}
for some $C>0$, and where $f(\btau)$ is defined by
$$f(\btau)=\left\{ \begin{array}{lcc}
             (1+|\tau_2|^{k_2})^{\tilde{\delta}_{D_3}-1} &  if  & |\tau_1|\le \rho_1 \\
             (1+|\tau_2|^{k_2})^{\tilde{\delta}_{D_2}-1} &  if  & |\tau_1|> \rho_1.
             \end{array}
   \right.
$$
\end{prop}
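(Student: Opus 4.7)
The plan is to package the three case analyses (1.1, 1.2, and 2) carried out in the paragraphs preceding the statement into a single unified lower bound. The work is essentially already done in those case-by-case estimates; the proposition just records the combined output with a uniform description of $f(\btau)$, so the proof is mainly a gluing argument together with a careful simultaneous choice of the directions $d_1,d_2$ and the radii $\rho_1,\rho_2,\rho_0$.

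First I would fix small radii $\rho_1,\rho_2>0$ and an auxiliary $\rho_0$ with $\rho_0\le\rho_2$, and split the region $(D(0,\rho_1)\cup S_{d_1})\times S_{d_2}$ into three disjoint pieces: (a) $\tau_1\in D(0,\rho_1)$ and $|\tau_2|\le\rho_2$ (handled by 1.1); (b) $\tau_1\in D(0,\rho_1)$ and $\tau_2\in S_{d_2}$ with $|\tau_2|\ge\rho_0$ (handled by 1.2); (c) $\tau_1\in S_{d_1}$ with $|\tau_1|\ge\rho_1$ and $\tau_2\in S_{d_2}$ (handled by case 2). On each piece we already have a lower bound, and on each piece the relevant factors $(1+|\tau_j|^{k_j})^{\bullet}$ are either bounded above and below by constants or comparable to $|\tau_j|^{k_j\bullet}$, so the bounds listed in the text can be converted into the form (\ref{e557}) after absorbing constants into $C$.

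More explicitly: in region (a) the estimate (\ref{e479}) gives $|P_m(\btau)/Q(im)|\ge C_1$, and since $(1+|\tau_1|^{k_1})^{\delta_{D_1}}$ and $(1+|\tau_2|^{k_2})^{\tilde{\delta}_{D_3}-1}$ are both bounded above on the compact polydisc by constants depending only on $\rho_1,\rho_2,\bk,\bd$, inequality (\ref{e557}) follows after shrinking $C_1$. In region (b) the estimate (\ref{e525}) gives $|P_m(\btau)/Q(im)|\ge C_1C_2|\tau_2|^{k_2(\tilde{\delta}_{D_3}-1)}$; using $|\tau_2|\ge\rho_0$ we have $|\tau_2|^{k_2(\tilde{\delta}_{D_3}-1)}\ge c(1+|\tau_2|^{k_2})^{\tilde{\delta}_{D_3}-1}$ for some $c>0$, while on $|\tau_1|\le\rho_1$ the factor $(1+|\tau_1|^{k_1})^{\delta_{D_1}}$ is bounded above, so (\ref{e557}) follows with the choice $f(\btau)=(1+|\tau_2|^{k_2})^{\tilde{\delta}_{D_3}-1}$. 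In region (c) the estimate (\ref{e547}) gives $|P_m(\btau)/Q(im)|\ge\tilde{C}|\tau_1|^{k_1\delta_{D_1}}|\tau_2|^{k_2(\tilde{\delta}_{D_2}-1)}$; since $|\tau_1|\ge\rho_1$, $|\tau_1|^{k_1\delta_{D_1}}$ is comparable to $(1+|\tau_1|^{k_1})^{\delta_{D_1}}$, and $|\tau_2|^{k_2(\tilde{\delta}_{D_2}-1)}\ge c'(1+|\tau_2|^{k_2})^{\tilde{\delta}_{D_2}-1}$ once $|\tau_2|$ is bounded below (small $|\tau_2|$ being again absorbed in a constant by boundedness on a compact annular set).

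The main obstacle is really to verify that the directions $d_1,d_2$ can be chosen once and for all so that the geometric sector-avoidance conditions imposed in cases 1.2 and 2 hold simultaneously and uniformly in $m\in\R$. For $d_2$, the condition in 1.2 is that $\tau_{2,k}^{k_2(\tilde{\delta}_{D_3}-1)}$ has an argument that does not match $\arg(\tau_2^{k_2(\tilde{\delta}_{D_3}-1)})$; by (\ref{e165b}) the quotient $Q(im)/R_{D_3}(im)$ ranges in a bounded sectorial annulus as $m$ varies, so a full interval of admissible $d_2$ exists. In case 2 the additional lower bound on the bracketed expression involves the same quotient through $R_{D_3}(im)/Q(im)$ and the sectorial annulus assumption again provides, after possibly shrinking the sector, a uniform positive constant. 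Similarly the direction $d_1$ is adjusted using the factorization of $\tau_1$ in terms of $\tau_2$ from case 2 so that $\theta_1$ stays close to $0$ uniformly in $m$. Once this simultaneous choice is in place, piecing together the three estimates on the three overlapping regions yields (\ref{e557}) with the piecewise description of $f(\btau)$.
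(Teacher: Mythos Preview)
Your proposal is correct and follows exactly the paper's own approach: the proposition is stated in the paper as a summary (``As a summary, we have achieved the following result'') of the three case analyses 1.1, 1.2 and 2 carried out immediately before it, and your write-up is precisely the gluing of the bounds (\ref{e479}), (\ref{e525}) and (\ref{e547}) into the unified form (\ref{e557}), together with the observation that the simultaneous choice of $d_1,d_2$ is made possible by the sectorial annulus hypothesis (\ref{e165b}). The only place where you add anything beyond the paper is the explicit remark on how the monomial factors $|\tau_j|^{k_j\bullet}$ are converted into $(1+|\tau_j|^{k_j})^{\bullet}$ on each region, which the paper leaves implicit.
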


\textbf{Remark:} Without loss of generality, we may assume that $\rho_1\le \rho$, where $\rho>0$ is the radius of the disc of holomorphy with respect to the first time variable appearing in Section~\ref{seclayout}. 

\subsection{Fixed point of a convolution operator in Banach spaces}\label{fixed}

The main purpose of this section is to obtain the existence of a fixed point on certain operator defined in a Banach space. It will allow us to construct the analytic solution of the main problem under study, (\ref{ICP_main0}).

For every $\epsilon\in D(0,\epsilon_0)\setminus\{0\}$, we consider the operator $\mathcal{H}_\epsilon$ defined by 
\begin{multline}
\mathcal{H}_{\epsilon}(\omega(\btau,m))\\
=\frac{-(k_1\tau_1^{k_1})^{\delta_{D_1}}}{P_m(\btau)}\frac{\tilde{\mathcal{A}}_{D_2}(\tau_2)}{k_2\tau_2^{k_2}}R_{D_1,D_2}(im)\omega(\btau,m)- \frac{(k_2\tau_2^{k_2})^{\tilde{\delta}_{D_2}-1}}{P_m(\btau)}\mathcal{A}_{D_1}(\tau_1)R_{D_1,D_2}(im)\omega(\btau,m)\\
-\frac{1}{P_m(\btau)}\mathcal{A}_{D_1}(\tau_1)\frac{\tilde{\mathcal{A}}_{D_2}(\tau_2)}{k_2\tau_2^{k_2}}R_{D_1,D_2}(im)\omega(\btau,m)-\frac{1}{P_m(\btau)}\frac{\tilde{\mathcal{A}}_{D_3}(\tau_2)}{k_2\tau_2^{k_2}}R_{D_3}(im)\omega(\btau,m)\\
+\frac{1}{P_m(\btau)}\frac{\epsilon^{-1}}{(2\pi)^{\frac{1}{2}}}\frac{\tau_1^{k_1}}{k_2\Gamma\left(1+\frac{1}{k_2}\right)}
\int_0^{\tau_2^{k_2}}(\tau_2^{k_2}-s_2)^{\frac{1}{k_2}}\int_{-\infty}^{\infty}\int_0^{\tau_1^{k_1}}\int_0^{s_2}P_1(i(m-m_1),\epsilon)\\
\times \omega((\tau_1^{k_1}-s_1)^{\frac{1}{k_1}},(s_2-x_2)^{\frac{1}{k_2}},m-m_1)P_2(im_1,\epsilon)\omega(s_1^{\frac{1}{k_1}},x_2^{\frac{1}{k_2}},m_1)\frac{dx_2ds_1dm_1ds_2}{(\tau_1^{k_1}-s_1)s_1(s_2-x_2)x_2}\\
+\frac{1}{P_m(\btau)}\sum_{\stackrel{0\le l_j\le D_j-1}{j=1,2}} \epsilon^{\Delta_{l_1,l_2}-d_{l_1}-\tilde{d}_{l_2}+\delta_{l_1}+\tilde{\delta}_{l_2}-1}R_{l_1,l_2}(im)\frac{\tau_1^{k_1}}{k_2\Gamma\left(\frac{d_{l_1,k_1}}{k_1}\right)\Gamma\left(\frac{d_{l_2,k_2}}{k_2}\right)}\\
\times
\int_0^{\tau_2^{k_2}}\int_0^{\tau_1^{k_1}}(\tau_1^{k_1}-s_1)^{\frac{d_{l_1,k_1}}{k_1}-1}(\tau_2^{k_2}-s_2)^{\frac{d_{l_2,k_2}}{k_2}-1}k_1^{\delta_{l_1}}s_1^{\delta_{l_1}}k_2^{\tilde{\delta}_{l_2}}s_2^{\tilde{\delta}_{l_2}}\omega(s_1^{\frac{1}{k_1}},s_2^{\frac{1}{k_2}},m)\frac{ds_1}{s_1}\frac{ds_2}{s_2}\\
+\frac{\epsilon^{-1}}{k_2P_m(\btau)\Gamma\left(1+\frac{1}{k_2}\right)}\int_{0}^{\tau_2^{k_2}}(\tau_2^{k_2}-s_2)^{\frac{1}{k_2}}\psi_{\bk}(\tau_1,s_2^{\frac{1}{k_2}},m,\epsilon)\frac{ds_2}{s_2}.\label{e623}
\end{multline}

\begin{prop}\label{prop653}
Assume that the hypotheses (\ref{e120})-(\ref{e165b}) hold. There exist $\varpi,\xi,R>0$ such that if
$$\left\|\psi_{\bk}(\btau,m)\right\|_{(\bnu,\beta,\mu,\bk,\epsilon)}\le \xi,\qquad \max\{R_1,R_2\}\le R,$$
, where $R_1,R_2$ are the geometric conditions determined in (\ref{eannulus}), for all $\epsilon\in D(0,\epsilon_0)\setminus\{0\}$. Then, the operator $\mathcal{H}_{\epsilon}$ defined in (\ref{e623}) admits a unique fixed point $\omega_{\bk}^{\bd}(\btau,m,\epsilon)\in F^{\bd}_{(\bnu,\beta,\mu,\bk,\epsilon)}$ such that $\left\|\omega_{\bk}^{\bd}(\btau,m,\epsilon)\right\|_{(\bnu,\beta,\mu,\bk,\epsilon)}\le \varpi$, for all $\epsilon\in D(0,\epsilon_0)\setminus\{0\}$.
\end{prop}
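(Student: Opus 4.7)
The plan is to set up a standard Banach fixed point argument on the closed ball $\overline{B}(0,\varpi) \subset F^{\bd}_{(\bnu,\beta,\mu,\bk,\epsilon)}$ for suitable $\varpi>0$, showing that $\mathcal{H}_{\epsilon}$ both preserves this ball and is a contraction on it, uniformly in $\epsilon\in D(0,\epsilon_0)\setminus\{0\}$. The conclusion then follows from the Banach fixed point theorem.

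The crucial preliminary step is to absorb the factor $1/P_m(\btau)$ into a weight of the type $a_{\bsigma,\bk}(\btau)$ featured in Lemmas~\ref{lema1}--\ref{lema6}. Indeed, by Proposition~\ref{prop614}, one has
\[
\Bigl|\tfrac{1}{P_m(\btau)}\Bigr| \le \tfrac{C^{-1}}{|Q(im)|\,(1+|\tau_1|^{k_1})^{\delta_{D_1}}}\cdot\tfrac{1}{(1+|\tau_2|^{k_2})^{\tilde{\delta}_{D_3}-1}\chi_{\{|\tau_1|\le\rho_1\}}+(1+|\tau_2|^{k_2})^{\tilde{\delta}_{D_2}-1}\chi_{\{|\tau_1|>\rho_1\}}},
\]
which after splitting $\btau$ according to the regime $|\tau_1|\le\rho_1$ or $|\tau_1|>\rho_1$ fits the hypothesis of Lemma~\ref{lema2}. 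I would first treat each linear contribution of $\mathcal{H}_\epsilon$: the four operators built from $\mathcal{A}_{D_1}$, $\tilde{\mathcal{A}}_{D_2}$, $\tilde{\mathcal{A}}_{D_3}$ are handled via Lemmas~\ref{lema2}, \ref{lema22}, \ref{lema23}, and for each summand one checks that the specific balances among $\delta_{D_1}$, $\tilde{\delta}_{D_2}$, $\tilde{\delta}_{D_3}$ displayed in (\ref{e120})--(\ref{e331}) guarantee that the power of $|\epsilon|$ produced by the lemmas is nonnegative, so that the corresponding bound becomes $\le C\,\max(R_1,R_2)\,\|\omega\|_{(\bnu,\beta,\mu,\bk,\epsilon)}$ (the factor $R_j$ coming from the annulus estimate (\ref{e165b}) applied to $R_{D_1,D_2}(im)/Q(im)$ and $R_{D_3}(im)/Q(im)$). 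The sum over $0\le l_j\le D_j-1$ is treated identically via Lemma~\ref{lema23}, and here the exponent constraint $\Delta_{l_1,l_2}-d_{l_1}-\tilde{d}_{l_2}+\delta_{l_1}+\tilde{\delta}_{l_2}-1>0$ (a consequence of (\ref{e331})) yields a positive power of $|\epsilon|$, bounding this block by $C|\epsilon|^{\alpha}\|\omega\|$ for some $\alpha>0$.

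For the bilinear piece encoding the nonlinearity $(P_1 u)(P_2 u)$, I would use Lemma~\ref{lema6} together with the factor $\tau_1^{k_1}$ to obtain a bound of the form $C\,\|\omega\|_{(\bnu,\beta,\mu,\bk,\epsilon)}^2$ (the relevant $|\epsilon|^{-1}$ from $\mathcal{H}_\epsilon$ cancels against the $|\epsilon|$ produced by the lemma). For the forcing term, Lemma~\ref{lemaaux} with $\gamma_2=1/k_2$ produces a bound $C\,\|\psi_{\bk}\|_{(\bnu,\beta,\mu,\bk,\epsilon)}\le C\xi$. Collecting all estimates gives
\[
\|\mathcal{H}_\epsilon(\omega)\|_{(\bnu,\beta,\mu,\bk,\epsilon)} \le A\,R\,\|\omega\| + B\,\|\omega\|^2 + C\,\xi,
\]
so that by first choosing $\varpi$, then $R$ and $\xi$ small enough, one gets $A R\varpi + B\varpi^2 + C\xi \le \varpi$, which proves stability of the ball under $\mathcal{H}_\epsilon$.

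For contractivity, I would repeat the exact same estimates applied to $\mathcal{H}_\epsilon(\omega_1)-\mathcal{H}_\epsilon(\omega_2)$, using bilinearity to write the nonlinear difference as $B(\omega_1,\omega_1-\omega_2)+B(\omega_1-\omega_2,\omega_2)$ and thus controlling it by $2B\varpi\,\|\omega_1-\omega_2\|$. Shrinking $R$ and $\varpi$ further if necessary yields $\|\mathcal{H}_\epsilon(\omega_1)-\mathcal{H}_\epsilon(\omega_2)\|\le \tfrac12\|\omega_1-\omega_2\|$. The main obstacle I anticipate is bookkeeping: one must verify, case by case, that the numerical balances in (\ref{e120})--(\ref{e331}) make each individual application of Lemmas~\ref{lema2}--\ref{lema23} legal (so that the inequalities $\sigma_{3j}+\sigma_{4j}\le\sigma_j-\tilde{\sigma}_j$ and $\sigma_{3j}=\xi_j/k_j-1$ are satisfied with the correct $\sigma_j$ coming from $\delta_{D_1}$ and from the piecewise $\tilde{\delta}_{D_3}-1$ or $\tilde{\delta}_{D_2}-1$), and that the resulting powers of $|\epsilon|$ are all nonnegative. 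Once this accounting is completed, the fixed point argument is routine.
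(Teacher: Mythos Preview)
Your proposal is correct and follows essentially the same approach as the paper: both proofs invoke Proposition~\ref{prop614} to control $1/P_m(\btau)$, then apply Lemmas~\ref{lema1}--\ref{lema6} term by term to bound $\mathcal{H}_\epsilon(\omega)$ on a closed ball, and repeat the estimates on $\mathcal{H}_\epsilon(\omega_1)-\mathcal{H}_\epsilon(\omega_2)$ (with the bilinear splitting you describe) to obtain a contraction constant $1/2$. The paper additionally shrinks $\epsilon_0$ to handle the $\sum_{l_1,l_2}$ block (your positive power $|\epsilon|^{\alpha}$ serves the same purpose), and your explicit mention of the regime split $|\tau_1|\le\rho_1$ versus $|\tau_1|>\rho_1$ is a point the paper leaves implicit.
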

\begin{proof}
Take $d_1,d_2\in\R$ and $\rho_1>0$ determined in Proposition~\ref{prop614}. First, we apply Lemma~\ref{lema1} and Lemma~\ref{lema2} to get that
\begin{multline}
\left\|\frac{\tau_1^{k_1\delta_{D_1}}R_{D_1,D_2}(im)}{P_m(\btau)}\int_0^{\tau_2^{k_2}}(\tau_2^{k_2}-s_2)^{\delta_{D_2}-p_2-1}s_2^{p_2-1}\omega(\tau_1,s_2^{\frac{1}{k_2}},m)ds_2\right\|_{(\bnu,\beta,\mu,\bk,\epsilon)}\\
\le \frac{C_1}{C}\sup_{m\in\R}\left|\frac{R_{D_1,D_2}(im)}{Q(im)}\right| \left\|\omega(\btau,m)\right\|_{(\bnu,\beta,\mu,\bk,\epsilon)},\label{e658}
\end{multline}
for every $1\le p_2\le \tilde{\delta}_{D_2}-1$.

In view of Lemma~\ref{lema1} and Lemma~\ref{lema22} we have

\begin{multline}
\left\|\tau_2^{k_2(\tilde{\delta}_{D_2}-1)}\frac{R_{D_1,D_2}(im)}{P_m(\btau)}\tau_1^{k_1}
\int_0^{\tau_1^{k_1}}(\tau_1^{k_1}-s_1)^{\delta_{D_1}-p_1-1}s_1^{p_1-1}\omega(s_1^{\frac{1}{k_1}},\tau_2,m)ds_1\right\|_{(\bnu,\beta,\mu,\bk,\epsilon)}\\
\le \frac{C_1}{C}\sup_{\tau_2\in S_{d_2}}\frac{|\tau_2|^{k_2(\tilde{\delta}_{D_2}-1)}}{(1+|\tau_2|^{k_2})^{\tilde{\delta}_{D_2}-1}}\sup_{m\in\R}\frac{1}{|Q(im)|}\left\|\omega(\btau,m)\right\|_{(\bnu,\beta,\mu,\bk,\epsilon)},\label{e659}
\end{multline}
for every $1\le p_1\le \delta_{D_1}-1$.

Moreover, from Lemma~\ref{lema1} and Lemma~\ref{lema23} we have

\begin{multline}
\left\|\tau_1^{k_1}\frac{R_{D_1,D_2}(im)}{P_m(\btau)}\int_0^{\tau_2^{k_2}}\int_0^{\tau_1^{k_1}}(\tau_2^{k_2}-s_2)^{\tilde{\delta}_{D_2}-p_2-1}s_2^{p_2-1}(\tau_1^{k_1}-s_1)^{\delta_{D_1}-p_1-1}s_1^{p_1-1}\right.\\
\left.\times \omega(s_1^{\frac{1}{k_1}},s_2^{\frac{1}{k_2}},m)ds_1ds_2\right\|_{(\bnu,\beta,\mu,\bk,\epsilon)}\le\frac{C_1}{C}\sup_{m\in\R}\left|\frac{R_{D_1,D_2}(im)}{Q(im)}\right| \left\|\omega(\btau,m)\right\|_{(\bnu,\beta,\mu,\bk,\epsilon)},\label{e660}
\end{multline}
for every $1\le p_1\le \delta_{D_1}-1$ and $1\le p_2\le \tilde{\delta}_{D_2}-1$.

We apply Lemma~\ref{lema1} and Lemma~\ref{lema2} to get 
\begin{multline}
\left\|\frac{1}{P_m(\btau)}\int_0^{\tau_2^{k_2}}(\tau_2^{k_2}-s_2)^{\delta_{D_3}-p_3-1}s_2^{p_3-1}\omega(\tau_1,s_2^{\frac{1}{k_2}},m)ds_2\right\|_{(\bnu,\beta,\mu,\bk,\epsilon)}\\
\le \frac{C_1}{C}\sup_{m\in\R}\left|\frac{1}{Q(im)}\right| \left\|\omega(\btau,m)\right\|_{(\bnu,\beta,\mu,\bk,\epsilon)},\label{e661}
\end{multline}
for every $1\le p_3\le \tilde{\delta}_{D_3}-1$.

Regarding Lemma~\ref{lema1} and Lemma~\ref{lema6}, we deduce that

\begin{multline}
\left\|\frac{\epsilon^{-1}}{P_m(\btau)}\tau_1^{k_1}\int_0^{\tau_2^{k_2}}(\tau_2^{k_2}-s_2)^{\frac{1}{k_2}}\int_{-\infty}^{\infty}\int_0^{\tau_1^{k_1}}\int_0^{s_2}P_1(i(m-m_1),\epsilon)\right.\\
\left.\times \omega((\tau_1^{k_1}-s_1)^{\frac{1}{k_1}},(s_2-x_2)^{\frac{1}{k_2}},m-m_1)P_2(im_1,\epsilon)\omega(s_1^{\frac{1}{k_1}},x_2^{\frac{1}{k_2}},m_1)\frac{dx_2ds_1dm_1ds_2}{(\tau_1^{k_1}-s_1)s_1(s_2-x_2)x_2}\right\|\\
\le |\epsilon|\frac{C_3}{\hbox{max}_{m\in\R}|Q(im)|}\left\|\omega(\btau,m)\right\|_{(\bnu,\beta,\mu,\bk,\epsilon)}^2\label{e662}
\end{multline}

We apply Lemma~\ref{lema1} and Lemma~\ref{lema23} to get
\begin{multline}
\left\|\frac{R_{l_1,l_2}(im)}{P_m(\btau)}\tau_1^{k_1}\int_0^{\tau_2^{k_2}}\int_0^{\tau_1^{k_1}}(\tau_1^{k_1}-s_1)^{\frac{d_{l_1,k_1}}{k_1}-1}(\tau_2^{k_2}-s_2)^{\frac{d_{l_2,k_2}}{k_2}-1}s_1^{\delta_{l_1}-1}s_2^{\tilde{\delta}_{l_2}-1}\right.\\
\left.\times \omega(s_1^{\frac{1}{k_1}},s_2^{\frac{1}{k_2}},m)ds_1 ds_2 \right\|_{(\bnu,\beta,\mu,\bk,\epsilon)} \le\frac{C_1}{C} \sup_{m\in\R}\left|\frac{R_{l_1,l_2}(im)}{Q(im)}\right|\left\|\omega(\btau,m)\right\|_{(\bnu,\beta,\mu,\bk,\epsilon)}
\end{multline}\label{e663}
for every $0\le l_j\le D_j-1$, for $j=1,2$.

Finally, the application of Lemma~\ref{lema1} and Lemma~\ref{lemaaux} yield
\begin{equation}\label{e664}
\left\|\frac{1}{P_m(\btau)}\int_{0}^{\tau_2^{k_2}}(\tau_2^{k_2}-s_2)^{\frac{1}{k_2}}\psi_{\bk}(\tau_1,s_2^{\frac{1}{k_2}},m,\epsilon)\frac{ds_2}{s_2}\right\|_{(\bnu,\beta,\mu,\bk,\epsilon)}\le\frac{C_1}{C}\sup_{m\in\R}\frac{1}{|Q(im)|}|\epsilon|\left\|\psi_{\bk}(\btau,m)\right\|_{(\bnu,\beta,\mu,\bk,\epsilon)}
\end{equation}

Take small enough $\varpi,\xi,\epsilon_0>0$ and assume that 
$$\sup_{m\in\R}\left|\frac{R_{D_1,D_2}(im)}{Q(im)}\right|\le R \quad \hbox{ and } \sup_{m\in\R}\left|\frac{R_{D_3}(im)}{Q(im)}\right|\le R,$$ 
in such a way that


\begin{multline}
k_1^{\delta_{D_1}}\frac{C_1 R}{C}\sum_{1\le p_2\le\tilde{\delta}_{D_2}-1}\frac{|A_{\tilde{\delta}_{D_2},p_2}|}{\Gamma(\tilde{\delta}_{D_2}-p_2)}\varpi+k_2^{\tilde{\delta}_{D_2}-1}\frac{C_1R}{C}\sum_{1\le p_1\le\delta_{D_1}-1}\frac{|A_{\delta_{D_1},p_1}|}{\Gamma(\delta_{D_1}-p_1)}\varpi\\
\frac{C_1R}{C}\sum_{1\le p_1\le\delta_{D_1}-1}\sum_{1\le p_2\le\tilde{\delta}_{D_2}-1}\frac{|A_{\delta_{D_1},p_1}|}{\Gamma(\delta_{D_1}-p_1)}\frac{|A_{\tilde{\delta}_{D_2},p_2}|}{\Gamma(\tilde{\delta}_{D_2}-p_2)}\varpi\\
\frac{C_1R}{C}\sum_{1\le p_3\le\tilde{\delta}_{D_3}-1}\frac{|A_{\tilde{\delta}_{D_3},p_3}|}{\Gamma(\tilde{\delta}_{D_3}-p_3)}\varpi+C_3\sup_{m\in\R}\frac{1}{|Q(im)|}\frac{1}{(2\pi)^{\frac{1}{2}}}\frac{1}{k_2\Gamma(1+\frac{1}{k_2})}\varpi^2\\
+\frac{C_1}{C}\sum_{\stackrel{0\le l_j\le D_j-1}{j=1,2}} \epsilon_0^{\Delta_{l_1,l_2}-\delta_{D_1}k_1-\tilde{\delta}_{D_2}k_2+k_2-1}\sup_{m\in\R}\left|\frac{R_{l_1,l_2}(im)}{Q(im)}\right|\frac{k_1^{\delta_{l_1}}k_2^{\tilde{\delta}_{l_2}-1}}{\Gamma\left(\frac{d_{l_1,k_1}}{k_1}\right)\Gamma\left(\frac{d_{l_2,k_2}}{k_1}\right)}\varpi\\
+\frac{C_1}{C k_2\Gamma\left(1+\frac{1}{k_2}\right)} \sup_{m\in\R}\frac{1}{|Q(im)|}\xi\le\varpi.\label{e677}
\end{multline}

Taking into account (\ref{e658}-\ref{e664}) and (\ref{e677}), we get that $\mathcal{H}_{\epsilon}(\overline{D}(0,\varpi))\subseteq \overline{D}(0,\varpi)$. Here, $\overline{D}(0,\varpi)$ stands for the closed disc of radius $\varpi$ centered at the origin in the Banach space $F^{\bd}_{(\bnu,\beta,\mu,\bk,\epsilon)}$. Now, let $\omega_1,\omega_2\in F^{\bd}_{(\bnu,\beta,\mu,\bk,\epsilon)}$, with $\left\|\omega_j(\btau,m)\right\|_{(\bnu,\beta,\mu,\bk,\epsilon)}\le\varpi$. We now prove that 
\begin{equation}\label{e741b}
\left\|\mathcal{H}_\epsilon(\omega_1)-\mathcal{H}_\epsilon(\omega_2)\right\|_{(\bnu,\beta,\mu,\bk,\epsilon)}\le\frac{1}{2}\left\|\omega_1-\omega_2\right\|_{(\bnu,\beta,\mu,\bk,\epsilon)}.
\end{equation}
At this point, the classical contractive mapping theorem acting on the complete metric space $\overline{D}(0,\varpi)\subseteq  F^{\bd}_{(\bnu,\beta,\mu,\bk,\epsilon)}$ guarantees the existence of a fixed point for $\mathcal{H}_\epsilon$. Let us check (\ref{e741b}).

Analogous estimates as in the first part of the proof yield

\begin{multline}
\left\|\frac{\tau_1^{k_1\delta_{D_1}}R_{D_1,D_2}(im)}{P_m(\btau)}\int_0^{\tau_2^{k_2}}(\tau_2^{k_2}-s_2)^{\delta_{D_2}-p_2-1}s_2^{p_2-1}(\omega_1(\tau_1,s_2^{\frac{1}{k_2}},m)-\omega_2(\tau_1,s_2^{\frac{1}{k_2}},m))ds_2\right\|_{(\bnu,\beta,\mu,\bk,\epsilon)}\\
\le \frac{C_1}{C}\sup_{m\in\R}\left|\frac{R_{D_1,D_2}(im)}{Q(im)}\right| \left\|\omega_1(\btau,m)-\omega_2(\btau,m)\right\|_{(\bnu,\beta,\mu,\bk,\epsilon)},\label{e658b}
\end{multline}
for every $1\le p_2\le \tilde{\delta}_{D_2}-1$. Also Lemma~\ref{lema1} and Lemma~\ref{lema22} yield

\begin{multline}
\left\|\tau_2^{k_2(\tilde{\delta}_{D_2}-1)}\frac{R_{D_1,D_2}(im)}{P_m(\btau)}\tau_1^{k_1}
\int_0^{\tau_1^{k_1}}(\tau_1^{k_1}-s_1)^{\delta_{D_1}-p_1-1}s_1^{p_1-1}(\omega_1(s_1^{\frac{1}{k_1}},\tau_2,m)-\omega_2(s_1^{\frac{1}{k_1}},\tau_2,m))ds_1\right\|_{(\bnu,\beta,\mu,\bk,\epsilon)}\\
\le \frac{C_1}{C}\sup_{\tau_2\in S_{d_2}}\frac{|\tau_2|^{k_2(\tilde{\delta}_{D_2}-1)}}{(1+|\tau_2|^{k_2})^{\tilde{\delta}_{D_2}-1}}\sup_{m\in\R}\frac{1}{|Q(im)|}\left\|\omega_1(\btau,m)-\omega_2(\btau,m)\right\|_{(\bnu,\beta,\mu,\bk,\epsilon)},\label{e659b}
\end{multline}
for every $1\le p_1\le \delta_{D_1}-1$. Lemma~\ref{lema1} and Lemma~\ref{lema23} guarantee that

\begin{multline}
\left\|\tau_1^{k_1}\frac{R_{D_1,D_2}(im)}{P_m(\btau)}\int_0^{\tau_2^{k_2}}\int_0^{\tau_1^{k_1}}(\tau_2^{k_2}-s_2)^{\tilde{\delta}_{D_2}-p_2-1}s_2^{p_2-1}(\tau_1^{k_1}-s_1)^{\delta_{D_1}-p_1-1}s_1^{p_1-1}\right.\\
\left.\times (\omega_1(s_1^{\frac{1}{k_1}},s_2^{\frac{1}{k_2}},m)-\omega_1(s_1^{\frac{1}{k_1}},s_2^{\frac{1}{k_2}},m))ds_1ds_2\right\|_{(\bnu,\beta,\mu,\bk,\epsilon)}\\
\le\frac{C_1^2}{C}\sup_{m\in\R}\left|\frac{R_{D_1,D_2}(im)}{Q(im)}\right| \left\|w(\btau,m)\right\|_{(\bnu,\beta,\mu,\bk,\epsilon)},\label{e660b}
\end{multline}
for every $1\le p_1\le \delta_{D_1}-1$ and $1\le p_2\le \tilde{\delta}_{D_2}-1$.

We apply Lemma~\ref{lema1} and Lemma~\ref{lema2} to get 
\begin{multline}
\left\|\frac{1}{P_m(\btau)}\int_0^{\tau_2^{k_2}}(\tau_2^{k_2}-s_2)^{\tilde{\delta}_{D_3}-p_3-1}s_2^{p_3-1}(\omega_1(\tau_1,s_2^{\frac{1}{k_2}},m)-\omega_2(\tau_1,s_2^{\frac{1}{k_2}},m))ds_2\right\|_{(\bnu,\beta,\mu,\bk,\epsilon)}\\
\le \frac{C_1}{C}\sup_{m\in\R}\left|\frac{1}{Q(im)}\right| \left\|\omega_1(\btau,m)-\omega_2(\btau,m)\right\|_{(\bnu,\beta,\mu,\bk,\epsilon)},\label{e661b}
\end{multline}
for every $1\le p_3\le \tilde{\delta}_{D_3}-1$.

In order to study the convolution operator, we need to give some details on the procedure. Put 
$$W_1:=\omega_{1}((\tau_1-s_1)^{1/k_1},(s_2-x_2)^{1/k_2},m-m_{1}) - \omega_{2}((\tau_1-s_1)^{1/k_1},(s_2-x_2)^{1/k_2},m-m_{1}),$$
and $W_2:=\omega_{1}(s_1^{1/k_1},x_2^{1/k_2},m_{1}) - \omega_{2}(s_1^{1/k_1},x_2^{1/k_2},m_{1}).$
Then, taking into account that
\begin{multline}
P_{1}(i(m-m_{1}),\epsilon)\omega_{1}((\tau_1-s_1)^{1/k_1},(s_2-x_2)^{1/k_2},m-m_{1})P_{2}(im_{1},\epsilon)\omega_{1}(s_1^{1/k_1},x_2^{1/k_2},m_{1})\\
-P_{1}(i(m-m_{1}),\epsilon)\omega_{2}((\tau_1-s_1)^{1/k_1},(s_2-x_2)^{1/k_2},m-m_{1})P_{2}(im_{1},\epsilon)\omega_{2}(s_1^{1/k_1},x_2^{1/k_2},m_{1})\\
= P_{1}(i(m-m_{1}),\epsilon)W_1 P_2(im_1,\epsilon)\omega_{1}(s_1^{1/k_1},x_2^{1/k_2},m_{1})\\
+ P_{1}(i(m-m_{1}),\epsilon)\omega_{2}((\tau_1-s_1)^{1/k_1},(s_2-x_2)^{1/k_2},m-m_{1})P_2(im_1,\epsilon)W_2,
\end{multline} 
and due to Lemma~\ref{lema1} and Lemma~\ref{lema6}, we proceed with analogous estimates as in (\ref{e662}) to get that
\begin{multline}
\left\|\frac{\epsilon^{-1}}{P_m(\btau)}\tau_1^{k_1}\int_0^{\tau_2^{k_2}}(\tau_2^{k_2}-s_2)^{\frac{1}{k_2}}\int_{-\infty}^{\infty}\int_0^{\tau_1^{k_1}}\int_0^{s_2}P_1(i(m-m_1),\epsilon)\right.\\
\times (\omega_1((\tau_1^{k_1}-s_1)^{\frac{1}{k_1}},(s_2-x_2)^{\frac{1}{k_2}},m-m_1)-\omega_2((\tau_1^{k_1}-s_1)^{\frac{1}{k_1}},(s_2-x_2)^{\frac{1}{k_2}},m-m_1))\\
\left.\times P_2(im_1,\epsilon)(\omega_1(s_1^{\frac{1}{k_1}},x_2^{\frac{1}{k_2}},m_1)-\omega_2(s_1^{\frac{1}{k_1}},x_2^{\frac{1}{k_2}},m_1))\frac{dx_2ds_1dm_1ds_2}{(\tau_1^{k_1}-s_1)s_1(s_2-x_2)x_2}\right\|\\
\le |\epsilon|\frac{C_3}{\hbox{max}_{m\in\R}|Q(im)|}\left(\left\|\omega_1(\btau,m)\right\|_{(\bnu,\beta,\mu,\bk,\epsilon)}+\left\|\omega_2(\btau,m)\right\|_{(\bnu,\beta,\mu,\bk,\epsilon)}\right)\left\|\omega_1(\btau,m)-\omega_2(\btau,m)\right\|_{(\bnu,\beta,\mu,\bk,\epsilon)}\label{e662b}
\end{multline}

Finally, we apply Lemma~\ref{lema1} and Lemma~\ref{lema23} to get
\begin{multline}
\left\|\frac{R_{l_1,l_2}(im)}{P_m(\btau)}\tau_1^{k_1}\int_0^{\tau_2^{k_2}}\int_0^{\tau_1^{k_1}}(\tau_1^{k_1}-s_1)^{\frac{d_{l_1,k_1}}{k_1}-1}(\tau_2^{k_2}-s_2)^{\frac{d_{l_2,k_2}}{k_2}-1}s_1^{\delta_{l_1}-1}s_2^{\tilde{\delta}_{l_2}-1}\right.\\
\left.\times (\omega_1(s_1^{\frac{1}{k_1}},s_2^{\frac{1}{k_2}},m)-\omega_2(s_1^{\frac{1}{k_1}},s_2^{\frac{1}{k_2}},m))ds_1 ds_2 \right\|_{(\bnu,\beta,\mu,\bk,\epsilon)}  \\
\le\frac{C_1}{C} \sup_{m\in\R}\left|\frac{R_{l_1,l_2}(im)}{Q(im)}\right|\left\|\omega_1(\btau,m)-\omega_2(\btau,m)\right\|_{(\bnu,\beta,\mu,\bk,\epsilon)}
\end{multline}\label{e663b}
for every $1\le l_j\le D_j-1$, for $j=1,2$.

We choose small enough $\varpi,\epsilon_0>0$ and assume that 
$$\sup_{m\in\R}\left|\frac{R_{D_1,D_2}(im)}{Q(im)}\right|\le R\quad \hbox{ and } \sup_{m\in\R}\left|\frac{R_{D_3}(im)}{Q(im)}\right|\le R,$$ 
to satisfy

\begin{multline*}
k_1^{\delta_{D_1}}\frac{C_1R}{C}\sum_{1\le p_2\le\tilde{\delta}_{D_2}-1}\frac{|A_{\tilde{\delta}_{D_2},p_2}|}{\Gamma(\tilde{\delta}_{D_2}-p_2)}+k_2^{\tilde{\delta}_{D_2}-1}\frac{C_1R}{C}\sum_{1\le p_1\le\delta_{D_1}-1}\frac{|A_{\delta_{D_1},p_1}|}{\Gamma(\delta_{D_1}-p_1)}\\
\frac{C_1R}{C}\sum_{1\le p_1\le\delta_{D_1}-1}\sum_{1\le p_2\le\tilde{\delta}_{D_2}-1}\frac{|A_{\delta_{D_1},p_1}|}{\Gamma(\delta_{D_1}-p_1)}\frac{|A_{\tilde{\delta}_{D_2},p_2}|}{\Gamma(\tilde{\delta}_{D_2}-p_2)}\\
\frac{C_1R}{C}\sum_{1\le p_3\le\tilde{\delta}_{D_3}-1}\frac{|A_{\tilde{\delta}_{D_3},p_3}|}{\Gamma(\tilde{\delta}_{D_3}-p_3)}+2C_3\sup_{m\in\R}\frac{1}{|Q(im)|}\frac{1}{(2\pi)^{\frac{1}{2}}}\frac{1}{k_2\Gamma(1+\frac{1}{k_2})}\varpi\\
+\frac{C_1}{C}\sum_{\stackrel{0\le l_j\le D_j-1}{j=1,2}} \epsilon_0^{\Delta_{l_1,l_2}-\delta_{D_1}k_1-\tilde{\delta}_{D_2}k_2+k_2-1}\sup_{m\in\R}\left|\frac{R_{l_1,l_2}(im)}{Q(im)}\right|\frac{k_1^{\delta_{l_1}}k_2^{\tilde{\delta}_{l_2}-1}}{\Gamma\left(\frac{d_{l_1,k_1}}{k_1}\right)\Gamma\left(\frac{d_{l_2,k_2}}{k_1}\right)}\\
\le\frac{1}{2}.
\end{multline*}

Then, (\ref{e741b}) holds, and the proof is complete.
\end{proof}

The following is a direct consequence of the previous result.
\begin{corol}
The function $\omega_{\bk}^{\bd}(\btau,m,\epsilon)$, obtained in Proposition~\ref{prop653} is a continuous function in $(\overline{D}(0,\rho)\cup S_{d_1})\times S_{d_2}\times \R\times D(0,\epsilon_0)\setminus\{0\}$, and holomorphic with respect to $\btau$ in the set $(D(0,\rho)\cup S_{d_1})\times S_{d_2}$ and on $D(0,\epsilon_0)\setminus\{0\}$ with respect to the perturbation parameter $\epsilon$. Moreover, it turns out to be a solution of (\ref{e310}), which satisfies there exists $\varpi>0$ such that
\begin{equation}\label{e807}
|\omega_{\bk}^{\bd}(\btau,m,\epsilon)|\le \varpi(1+|m|)^{-\mu}
\frac{|\frac{\tau_1}{\epsilon}|}{1 + |\frac{\tau_1}{\epsilon}|^{2k_1}}\frac{|\frac{\tau_2}{\epsilon}|}{1 + |\frac{\tau_2}{\epsilon}|^{2k_2}}\exp( -\beta|m| + \nu_1|\frac{\tau_1}{\epsilon}|^{k_1}+\nu_2|\frac{\tau_2}{\epsilon}|^{k_2} ),
\end{equation}
for every $(\btau,m,\epsilon)\in (\overline{D}(0,\rho)\cup S_{d_1})\times S_{d_2}\times \R\times D(0,\epsilon_0)\setminus\{0\}$.
\end{corol}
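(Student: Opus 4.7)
The plan is to read off the four assertions (continuity in $(\btau,m,\epsilon)$, holomorphy in $\btau$, holomorphy in $\epsilon$, and the pointwise bound) as consequences of the fixed point construction in Proposition~\ref{prop653}, and then to verify that the fixed point equation $\omega=\mathcal{H}_\epsilon(\omega)$ is exactly a rearrangement of (\ref{e310}).

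First, the membership $\omega_{\bk}^{\bd}(\cdot,\cdot,\epsilon)\in F^{\bd}_{(\bnu,\beta,\mu,\bk,\epsilon)}$ built into the fixed point construction already delivers the continuity on $(\overline{D}(0,\rho)\cup S_{d_1})\times S_{d_2}\times\R$ and holomorphy with respect to $\btau$ on $(D(0,\rho)\cup S_{d_1})\times S_{d_2}$, for each fixed $\epsilon\in D(0,\epsilon_0)\setminus\{0\}$. Moreover, the norm bound $\|\omega_{\bk}^{\bd}(\btau,m,\epsilon)\|_{(\bnu,\beta,\mu,\bk,\epsilon)}\le\varpi$ translates, by unwinding Definition~\ref{def2}, precisely to the pointwise estimate (\ref{e807}) with the same constant $\varpi$, for every $(\btau,m)\in (\overline{D}(0,\rho)\cup S_{d_1})\times S_{d_2}\times\R$ and every $\epsilon\in D(0,\epsilon_0)\setminus\{0\}$.

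Next, to get joint continuity in $(\btau,m,\epsilon)$ and holomorphy in $\epsilon$ on $D(0,\epsilon_0)\setminus\{0\}$, I would invoke the parameter-dependent Banach fixed point principle. Starting from $\omega^{(0)}\equiv 0$ and iterating $\omega^{(n+1)}=\mathcal{H}_\epsilon(\omega^{(n)})$, one checks inductively that each $\omega^{(n)}(\btau,m,\epsilon)$ is jointly continuous on $(\overline{D}(0,\rho)\cup S_{d_1})\times S_{d_2}\times\R\times (D(0,\epsilon_0)\setminus\{0\})$ and holomorphic in $\epsilon$ on $D(0,\epsilon_0)\setminus\{0\}$: this uses only that $P_m(\btau)$, the coefficients $\epsilon^{\Delta_{l_1,l_2}-\cdots}$, the polynomials $P_j(\cdot,\epsilon)$, and $\psi_{\bk}(\btau,m,\epsilon)$ have these properties, together with the fact that all the integral operators in (\ref{e623}) preserve joint continuity and holomorphy (standard application of Morera's theorem and dominated convergence, the dominating functions being provided by the very estimates (\ref{e658})--(\ref{e664})). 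The contractivity established in the proof of Proposition~\ref{prop653} is uniform in $\epsilon$ on $D(0,\epsilon_0)\setminus\{0\}$, so the sequence $\omega^{(n)}$ converges to $\omega_{\bk}^{\bd}$ uniformly with respect to $\epsilon$ on compact subsets thereof, in the sup norm coming from (\ref{e807}). Hence the limit inherits joint continuity in $(\btau,m,\epsilon)$ and holomorphy in $\epsilon$.

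Finally, $\omega_{\bk}^{\bd}$ solves (\ref{e310}). Indeed, comparing (\ref{e623}) and (\ref{e310}), the equation $\omega=\mathcal{H}_\epsilon(\omega)$ is obtained from (\ref{e310}) by multiplying both sides by $1/P_m(\btau)$ and using the definitions (\ref{e558b}) of $\mathcal{A}_{\delta_{D_1}}$ and $\tilde{\mathcal{A}}_{\tilde{\delta}_{D_j}}$; since Proposition~\ref{prop614} guarantees $P_m(\btau)\neq 0$ on $(\overline{D}(0,\rho)\cup S_{d_1})\times S_{d_2}\times\R$, multiplying back through gives (\ref{e310}). The main point to double-check is that each term in (\ref{e310}), evaluated at $\omega_{\bk}^{\bd}$, is well-defined and finite in view of (\ref{e807}) and the assumptions (\ref{e120})--(\ref{e331}); this is exactly what the estimates (\ref{e658})--(\ref{e664}) establish. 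I expect the mildly delicate step to be the joint continuity and $\epsilon$-holomorphy at the iteration level, since one has to commute limits with the convolution integrals; but the uniform majorants produced in Proposition~\ref{prop653} make the required dominated convergence straightforward, and no further genuinely new estimate is needed beyond those already secured.
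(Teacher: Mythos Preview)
Your proposal is correct and is essentially the same approach as the paper, which records the corollary as ``a direct consequence of the previous result'' and gives no separate proof. You have simply spelled out the natural details (membership in $F^{\bd}_{(\bnu,\beta,\mu,\bk,\epsilon)}$ yields the continuity, holomorphy and bound; parameter-dependent contraction gives the $\epsilon$-regularity; and the fixed point equation is (\ref{e310}) divided by $P_m(\btau)$), which is exactly what the authors leave implicit.
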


\section{Family of analytic solutions of the main problem}

In this section, we consider the main problem under study, namely (\ref{ICP_main0}), under the conditions (\ref{e120})-(\ref{e331}) on the parameters involved, and also on the geometry of the problem, (\ref{raicesgrandes})-(\ref{e165b}). In order to construct the analytic solution of the problem, we recall the definition of a good covering in $\C^{\star}$.

\begin{defin}\label{goodcovering} Let $\varsigma_1,\varsigma_2 \geq 2$ be integer numbers. Let $\{ \mathcal{E}_{p_1,p_2} \}_{\begin{subarray}{l} 0 \leq p_1 \leq \varsigma_1 - 1\\0 \leq p_2 \leq \varsigma_2 - 1\end{subarray}}$ be a finite family of open sectors with vertex at $0$, and radius $\epsilon_{0}$. In addition to this, we assume the opening of every sector is chosen to be slightly larger than $\pi/k_2$ in the case that $k_1<k_2$, and slightly larger than $\pi/k_1$ in case $k_2<k_1$.

We assume that the intersection of three different sectors in the good covering is empty, and
$\cup_{\begin{subarray}{l} 0 \leq p_1 \leq \varsigma_1 - 1\\0 \leq p_2 \leq \varsigma_2 - 1\end{subarray}} \mathcal{E}_{p_1,p_2} = \mathcal{U} \setminus \{ 0 \}$,
for some neighborhood of 0, $\mathcal{U}\in\mathbb{C}$. Such set of sectors is called a good covering in $\mathbb{C}^{\ast}$.

\end{defin}

\begin{defin}\label{defgood2} Let $\varsigma_1,\varsigma_2\ge 2$ and $\{ \mathcal{E}_{p_1,p_2} \}_{\begin{subarray}{l} 0 \leq p_1 \leq \varsigma_1 - 1\\0 \leq p_2 \leq \varsigma_2 - 1\end{subarray}}$ be a good covering in $\mathbb{C}^{\ast}$. Let
$\mathcal{T}_j$ be open bounded sectors centered at 0 with radius $r_{\mathcal{T}_j}$ for $j\in\{1,2\}$, and consider two families of sectors as follows: let
$$ S_{\mathfrak{d}_{p_1},\theta_1,\epsilon_{0}r_{\mathcal{T}_1}} =
\{ T_1 \in \mathbb{C}^{\ast} / |T_1| < \epsilon_{0}r_{\mathcal{T}_1} \ \ , \ \ |\mathfrak{d}_{p_1} - \mathrm{arg}(T_1)| < \theta_1/2 \}, $$
$$ S_{\tilde{\mathfrak{d}}_{p_2},\theta_2,\epsilon_{0}r_{\mathcal{T}_2}} =
\{ T_2 \in \mathbb{C}^{\ast} / |T_2| < \epsilon_{0}r_{\mathcal{T}_2} \ \ , \ \ |\mathfrak{d}_{p_2} - \mathrm{arg}(T_2)| < \theta_2/2 \},$$
with opening $\theta_j > \pi/k_j$, and where  $\mathfrak{d}_{p_1},\tilde{\mathfrak{d}}_{p_2} \in \mathbb{R}$, for all $0 \leq p_1 \leq \varsigma_1-1$ and $0 \leq p_2 \leq \varsigma_2-1$ is the couple of directions $d_1,d_2\in\R$ mentioned in Proposition~\ref{prop614}, whenever $\mathcal{E}_{p_1,p_2}$ is the domain of definition of the perturbation parameter $\epsilon$. 

In addition to that, the sectors $S_{\mathfrak{d}_{p_1},\theta_1,\epsilon_{0}r_{\mathcal{T}_1}}$ and $S_{\tilde{\mathfrak{d}}_{p_2},\theta_2,\epsilon_{0}r_{\mathcal{T}_2}}$ are such that for all $0 \leq p_1 \leq \varsigma_1 - 1$, $0 \leq p_2 \leq \varsigma_2 - 1$, $\bt \in \mathcal{T}_1\times \mathcal{T}_2$, and $\epsilon \in \mathcal{E}_{p_1,p_2}$, one has
$$\epsilon t_1 \in S_{\mathfrak{d}_{p_1},\theta_1,\epsilon_{0}r_{\mathcal{T}_1}}\hbox{ and }\epsilon t_2 \in S_{\tilde{\mathfrak{d}}_{p_2},\theta_2,\epsilon_{0}r_{\mathcal{T}_2}}.$$

\noindent We say that the family
$\{ (S_{\mathfrak{d}_{p_1},\theta_1,\epsilon_{0}r_{\mathcal{T}_1}})_{0 \leq p_1 \leq \varsigma_1-1}, (S_{\tilde{\mathfrak{d}}_{p_2},\theta_2,\epsilon_{0}r_{\mathcal{T}_2}})_{0 \leq p_2 \leq \varsigma_2-1} ,\mathcal{T}_1\times \mathcal{T}_2 \}$
is associated to the good covering $\{ \mathcal{E}_{p_1,p_2} \}_{\begin{subarray}{l}0 \leq p_1 \leq \varsigma_1 - 1\\0 \leq p_2 \leq \varsigma_2 - 1\end{subarray}}$.
\end{defin}

Let $\varsigma_1,\varsigma_2\ge 2$ and $\{ \mathcal{E}_{p_1,p_2} \}_{\begin{subarray}{l} 0 \leq p_1 \leq \varsigma_1 - 1\\0 \leq p_2 \leq \varsigma_2 - 1\end{subarray}}$ be a good covering in $\mathbb{C}^{\ast}$. We assume the family $\{ (S_{\mathfrak{d}_{p_1},\theta_1,\epsilon_{0}r_{\mathcal{T}_1}})_{0 \leq p_1 \leq \varsigma_1-1}, (S_{\tilde{\mathfrak{d}}_{p_2},\theta_2,\epsilon_{0}r_{\mathcal{T}_2}})_{0 \leq p_2 \leq \varsigma_2-1} ,\mathcal{T}_1\times \mathcal{T}_2 \}$ is associated to the previous good covering.

The existence of a solution $\omega_{\bk}^{\bd}(\btau,m,\epsilon)$ of the auxiliary problem (\ref{e310}) turns out to provide an actual solution of the main problem via Laplace and Fourier transform, in view of the constraints satisfied by $\omega_{\bk}^{\bd}(\btau,m,\epsilon)$, see (\ref{e807}). More precisely, for every $0\le p_1\le \varsigma_1$ and $0\le p_2\le \varsigma_2-1$, the function

\begin{equation}\label{e817}
u_{p_1,p_2}(\bt,z,\epsilon)=\frac{k_1k_2}{(2\pi)^{1/2}}\int_{-\infty}^{+\infty}
\int_{L_{\gamma_{p_1}}}\int_{L_{\gamma_{p_2}}}
\omega_{\bk}^{\mathfrak{d}_{p_1},\tilde{\mathfrak{d}}_{p_2}}(u_1,u_2,m,\epsilon) e^{-(\frac{u_1}{\epsilon t_1})^{k_1}-(\frac{u_2}{\epsilon t_2})^{k_2}} e^{izm} \frac{du_2}{u_2}\frac{du_1}{u_1} dm,
\end{equation}
is holomorphic on the domain $(\mathcal{T}_1\cap D(0,h'))\times(\mathcal{T}_2\cup D(0,h'))\times H_{\beta'}\times \mathcal{E}_{p_1,p_2}$, for any $0<\beta'<\beta$ and some $h'>0$.

The first main result of the present work is devoted to the construction of a family of actual holomorphic solutions to the equation (\ref{ICP_main0}) for null initial data. Each of the elements in the family of solutions is associated to an element of a good covering with respect to the complex parameter $\epsilon$. The strategy leans on the control of the difference of two solutions defined in domains with nonempty intersection with respect to the perturbation parameter $\epsilon$. The construction of each analytic solution in terms of two Laplace transforms in different time variables requires to distinguish different cases, depending on the coincidence of the integration paths or not.

\begin{theo}\label{teo1}
Let the hypotheses of Proposition~\ref{prop653} hold. Then, for every element $\mathcal{E}_{p_1,p_2}$ in the good covering in $\C^{\star}$, there exists a solution $u_{p_1,p_2}(\bt,z,\epsilon)$ of the main problem under study (\ref{ICP_main0}) defined and holomorphic on $(\mathcal{T}_1\cap D(0,h'))\times(\mathcal{T}_2\cup D(0,h'))\times H_{\beta'}\times \mathcal{E}_{p_1,p_2}$, for any $0<\beta'<\beta$ and some $h'>0$. 

Moreover, for every two different multiindices $(p_1,p_2),(p'_1,p'_2)\in\{0,\ldots,\varsigma_1-1\}\times\{0,\ldots,\varsigma_2-1\}$, one of the following situations hold:

\begin{itemize}
\item Case 1: $\mathcal{E}_{p_1,p_2}\cap \mathcal{E}_{p'_1,p'_2}=\emptyset$.
\item Case 2: $\mathcal{E}_{p_1,p_2}\cap \mathcal{E}_{p'_1,p'_2}\neq\emptyset$. The path $L_{\gamma_{p_2}}$ coincides with $L_{\gamma_{p'_2}}$ but $L_{\gamma_{p_1}}$ does not coincide with $L_{\gamma_{p'_1}}$. Then, it holds that
\begin{equation}
\sup_{\bt \in (\mathcal{T}_1 \cap D(0,h''))\times (\mathcal{T}_2 \cap D(0,h'')), z \in H_{\beta'}}
|u_{p_1,p_2}(\bt,z,\epsilon) - u_{p'_1,p'_2}(\bt,z,\epsilon)| \leq K_{p}e^{-\frac{M_p}{|\epsilon|^{k_1}}},
\label{exp_small_difference_u_p11}
\end{equation}
for every $\epsilon\in\mathcal{E}_{p_1,p_2}\cap\mathcal{E}_{p'_1,p'_2}$. In that case, we say that $((p_1,p_2),(p'_1,p'_2))$ belongs to the subset $\mathcal{U}_{k_1}$ of $\{0,\ldots,\varsigma_1-1\}\times\{0,\ldots,\varsigma_2-1\}$.
\item Case 3: $\mathcal{E}_{p_1,p_2}\cap \mathcal{E}_{p'_1,p'_2}\neq\emptyset$. Neither, the path $L_{\gamma_{p_2}}$ coincides with $L_{\gamma_{p'_2}}$, nor $L_{\gamma_{p_1}}$ coincides with $L_{\gamma_{p'_1}}$. Then, it holds that
\begin{equation}
\sup_{\bt \in (\mathcal{T}_1 \cap D(0,h''))\times (\mathcal{T}_2 \cap D(0,h'')), z \in H_{\beta'}}
|u_{p_1,p_2}(\bt,z,\epsilon) - u_{p'_1,p'_2}(\bt,z,\epsilon)| \leq K_{p}\max \left\{ e^{-\frac{M_p}{|\epsilon|^{k_1}}}, e^{-\frac{M_p}{|\epsilon|^{k_2}}} \right\},
\label{exp_small_difference_u_p12}
\end{equation}
for every $\epsilon\in\mathcal{E}_{p_1,p_2}\cap\mathcal{E}_{p'_1,p'_2}$.
\end{itemize}
\end{theo}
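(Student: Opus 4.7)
The plan is to construct each $u_{p_1,p_2}$ directly from the fixed point $\omega_{\bk}^{\mathfrak{d}_{p_1},\tilde{\mathfrak{d}}_{p_2}}$ produced by Proposition~\ref{prop653}, then derive the difference estimates through Cauchy-type deformations between the different pairs of Laplace rays. For the well-definedness of the integral in (\ref{e817}), I would combine the bound (\ref{e807}) with the Gaussian-type damping $\exp(-\cos(k_j(\gamma_{p_j}-\arg(\epsilon t_j)))\,|u_j/(\epsilon t_j)|^{k_j})$ along each ray $L_{\gamma_{p_j}}$: whenever $\theta_j>\pi/k_j$ and $|t_j|$ is smaller than some $h'>0$ depending on $\nu_j$, this damping dominates the $\exp(\nu_j |u_j/\epsilon|^{k_j})$ growth of $\omega$, so both Laplace integrals converge absolutely. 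The outer Fourier integral is controlled by $e^{-\beta|m|}$ for $z\in H_{\beta'}$ with $\beta'<\beta$. Holomorphy follows from dominated convergence, and the fact that $u_{p_1,p_2}$ solves (\ref{ICP_main0}) is a direct consequence of Lemma~\ref{lema257}, which turns (\ref{e310}) back into (\ref{e32c}) and then into the original equation after undoing the scaling $\bt\mapsto(\epsilon t_1,\epsilon t_2)$.

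For the difference estimates I would exploit the uniqueness part of Proposition~\ref{prop653}: any two fixed points $\omega_{\bk}^{\mathfrak{d}_{p_1},\tilde{\mathfrak{d}}_{p_2}}$ and $\omega_{\bk}^{\mathfrak{d}_{p'_1},\tilde{\mathfrak{d}}_{p'_2}}$ coming from overlapping $\epsilon$-domains agree on the intersection of their natural holomorphy domains and represent a single holomorphic function there. Case~1 is vacuous. In Case~2, with $L_{\gamma_{p_2}}=L_{\gamma_{p'_2}}$, the outer $u_2$-Laplace integral is identical for both solutions, and $u_{p_1,p_2}-u_{p'_1,p'_2}$ reduces to the $u_1$-integral over $L_{\gamma_{p_1}}-L_{\gamma_{p'_1}}$. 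I would apply Cauchy's theorem in the angular sector between these rays, closed off by an arc $C_\rho=\{|u_1|=\rho\}$ lying inside the common disc of holomorphy $D(0,\rho)$ of $\omega$ in $\tau_1$. The vanishing contribution at infinity follows from the Gaussian damping and $\theta_1>\pi/k_1$, and the remaining arc integral, once the convergent $u_2$-Laplace is performed and $\rho$ is optimised in terms of $|\epsilon|$, yields the standard Gevrey-$1/k_1$ bound $K_p\exp(-M_p/|\epsilon|^{k_1})$.

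Case~3 is the main technical difficulty because Section~\ref{secnodef} forbids any $u_2$-contour deformation across the origin. My plan is to insert an auxiliary intermediate function $u_{p'_1,p_2}$ built from the mixed direction pair $(\mathfrak{d}_{p'_1},\tilde{\mathfrak{d}}_{p_2})$, which is again provided by Proposition~\ref{prop653}, and split
\begin{equation*}
u_{p_1,p_2}-u_{p'_1,p'_2}=\bigl(u_{p_1,p_2}-u_{p'_1,p_2}\bigr)+\bigl(u_{p'_1,p_2}-u_{p'_1,p'_2}\bigr).
\end{equation*}
The first bracket has matching $u_2$-path and is handled exactly as in Case~2, producing $K_p\exp(-M_p/|\epsilon|^{k_1})$. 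For the second bracket the $u_1$-path is common and the difference is a $u_2$-integral over $L_{\gamma_{p_2}}-L_{\gamma_{p'_2}}$, which I would close by an arc $|u_2|=\rho_0$ sitting in the common sector $S_{\tilde{\mathfrak{d}}_{p_2}}\cap S_{\tilde{\mathfrak{d}}_{p'_2}}$ (not through the origin, as required by the geometric analysis of Proposition~\ref{prop614}); optimising $\rho_0$ in $|\epsilon|$ yields $K_p\exp(-M_p/|\epsilon|^{k_2})$. Taking the maximum of the two bounds gives (\ref{exp_small_difference_u_p12}). The principal obstacle lies precisely in this $\tau_2$-deformation: because of the small-divisor phenomenon of Section~\ref{secnodef} one cannot collapse the arc to the origin, so the exponential rate must be extracted by balancing $\rho_0$ against $|\epsilon|$ inside a purely sectorial geometry, and care must also be taken to verify that the direction pair of the intermediate function is consistent with the constraints of Proposition~\ref{prop614}, which follows from the overlap of the good covering in Definition~\ref{defgood2}.
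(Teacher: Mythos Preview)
Your handling of the existence part and of Case~2 matches the paper's argument. The gap is in Case~3.

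Your splitting via the auxiliary function $u_{p'_1,p_2}$ reduces the second bracket $u_{p'_1,p_2}-u_{p'_1,p'_2}$ to a pure $u_2$--deformation with the $u_1$--integral running over the \emph{full} half line $L_{\gamma_{p'_1}}$. This is exactly the configuration the paper rules out in the Remark following the theorem: when $|u_1|$ is large (which it must be along $L_{\gamma_{p'_1}}$), the function $u_2\mapsto\omega_{\bk}^{\mathfrak{d}_{p'_1},\tilde{\mathfrak{d}}}(\,\cdot\,,u_2,m,\epsilon)$ is holomorphic only on the sector $S_{\tilde{\mathfrak{d}}}$ and not on any disc around the origin, because of the small--divisor obstruction of Section~\ref{secnodef}. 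Since in Case~3 the two rays $L_{\gamma_{p_2}}$ and $L_{\gamma_{p'_2}}$ are by hypothesis non--deformable into each other, the sectors $S_{\tilde{\mathfrak{d}}_{p_2}}$ and $S_{\tilde{\mathfrak{d}}_{p'_2}}$ are separated by a direction where $P_m$ vanishes; there is no common sector containing both rays, so the arc you propose at $|u_2|=\rho_0$ does not lie inside a domain of holomorphy for a single function, and Cauchy's theorem cannot be invoked. (If the two $\tau_2$--sectors did overlap in the way you need, the two integrals would actually coincide and the difference would vanish, contradicting the premise of Case~3.)

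The paper circumvents this by a different decomposition that acts on the $u_1$--path, not by introducing an auxiliary solution. It writes $u_{p_1,p_2}-u_{p'_1,p'_2}=J_1-J_2+J_3$, where in $J_1,J_2$ the $u_1$--integral starts at radius $\rho_1/2$ and runs out to infinity (yielding the $e^{-M_p/|\epsilon|^{k_1}}$ bound by the standard half--line argument), while in $J_3$ the $u_1$--integral is over the finite segment $[0,\tfrac{\rho_1}{2}e^{i\theta}]$. The key point is that \emph{on this segment} $|u_1|<\rho_1$, and by the geometric analysis in Proposition~\ref{prop614} (cases~1.1 and~1.2) the symbol $P_m(\btau)$ is bounded below for $\tau_2$ in a full disc $D(0,\rho)$, so $\omega$ extends holomorphically in $u_2$ across the origin. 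Only then can the $u_2$--contour be deformed through an arc of radius $\rho_1/2$ as in Case~2, producing the $e^{-M_p/|\epsilon|^{k_2}}$ contribution. Thus the essential idea you are missing is that the $\tau_2$--holomorphy on a disc is available only after restricting $|\tau_1|$ to be small; the $u_1$--path must be split before any $u_2$--deformation is attempted.
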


\begin{proof}
The existence of the solution $u_{p_1,p_2}(\bt,z,\epsilon)$, for every $0\le p_1\le \varsigma_1$ and $0\le p_2\le \varsigma_2-1$ is guaranteed from the construction described previously.

We now give proof for the second statement of the result, namely, the existence of an exponential decay to 0, with respect to the perturbation parameter, of the difference of two consecutive solutions in the good covering, uniformly with respect to $(\bt,z)$. 

The proof is close to that of Theorem 1 in~\cite{family1}, but for the sake of clarity, we give a complete description.\par

\textbf{Case 2:} Assume that the path $L_{\gamma_{p_2}}$ coincides with $L_{\gamma_{p'_2}}$, and $L_{\gamma_{p_1}}$ does not coincide with $L_{\gamma_{p'_1}}$. Then, using that
 $u_1 \mapsto \omega_{\bk}^{\mathfrak{d}_{p_1},\tilde{\mathfrak{d}}_{p_2}}(u_1,u_2,m,\epsilon) \exp( -(\frac{u_1}{\epsilon t_1})^{k_1} )/u_1$ is holomorphic on $D(0,\rho)$ for all
$(m,\epsilon) \in \mathbb{R} \times (D(0,\epsilon_{0}) \setminus \{ 0 \})$, and every $u_2\in L_{\gamma_{p_2}}$, one can deform one of the integration paths and write
$$I=\int_{L_{\gamma_{p_1}}}\omega_{\bk}^{\mathfrak{d}_{p_1},\tilde{\mathfrak{d}}_{p_2}}(u_1,u_2,m,\epsilon)e^{-\left(\frac{u_1}{\epsilon t_1}\right)^{k_1}}\frac{du_1}{u_1}-\int_{L_{\gamma_{p'_1}}}\omega_{\bk}^{\mathfrak{d}_{p_1},\tilde{\mathfrak{d}}_{p_2}}(u_1,u_2,m,\epsilon)e^{-\left(\frac{u_1}{\epsilon t_1}\right)^{k_1}}\frac{du_1}{u_1}$$
in the form
\begin{multline*}
\int_{L_{\rho_1/2,\gamma_{p_1}}}
\omega_{\bk}^{\mathfrak{d}_{p_1},\tilde{\mathfrak{d}}_{p_2}}(u_1,u_2,m,\epsilon) e^{-(\frac{u_1}{\epsilon t_1})^{k_1}} \frac{du_1}{u_1} \\ 
-\int_{L_{\rho_1/2,\gamma_{p'_1}}}
\omega_{\bk}^{\mathfrak{d}_{p_1},\tilde{\mathfrak{d}}_{p_2}}(u_1,u_2,m,\epsilon) e^{-(\frac{u_1}{\epsilon t_1})^{k_1}} \frac{du_1}{u_1}\\
+ \int_{C_{\rho_1/2,\gamma_{p'_1},\gamma_{p_1}}}
\omega_{\bk}^{\mathfrak{d}_{p_1},\tilde{\mathfrak{d}}_{p_2}}(u_1,u_2,m,\epsilon) e^{-(\frac{u_1}{\epsilon t_1})^{k_1}} \frac{du_1}{u_1}. 
\end{multline*}
where $L_{\rho_1/2,\gamma_{p_1}} = [\rho_1/2,+\infty)e^{i\gamma_{p_1}}$,
$L_{\rho_1/2,\gamma_{p'_1}} = [\rho_1/2,+\infty)e^{i\gamma_{p'_1}}$ and
$C_{\rho_1/2,\gamma_{p'_1},\gamma_{p_1}}$ is an arc of circle connecting
$(\rho_1/2)e^{i\gamma_{p'_1}}$ and $(\rho_1/2)e^{i\gamma_{p_1}}$ with the adequate orientation. The positive real number $\rho_1$ is determined in Proposition~\ref{prop614}.\medskip

We get the existence of constants $C_{p_1,p'_1},M_{p_1,p'_1}>0$ such that
$$|I|\le C_{p_1,p'_1}\varpi_{\mathfrak{d}_{p_1},\tilde{\mathfrak{d}}_{p_2}}(1+ |m|)^{-\mu} e^{-\beta|m|} \frac{ |\frac{u_2}{\epsilon}|}{1 + |\frac{u_2}{\epsilon}|^{2k_2}}
\exp( \nu_2 |\frac{u_2}{\epsilon}|^{k_2})e^{-\frac{M_{p_1,p'_1}}{|\epsilon|^{k_1}}},$$
for $t_1\in\mathcal{T}_1\cap D(0,h')$ and $\epsilon\in\mathcal{E}_{p_1,p_2}\cap\mathcal{E}_{p'_1,p'_2}$ and $u_2\in L_{\gamma_{p_2}}$. We have
\begin{multline}
|u_{p_1,p_2}(\bt,z,\epsilon)-u_{p'_1,p'_2}(\bt,z,\epsilon)|\\
\le \frac{k_1k_2}{(2\pi)^{1/2}}C_{p_1,p'_1}\left(\int_{-\infty}^{\infty}(1+|m|)^{-\mu}e^{-\beta|m|}e^{-m|\hbox{Im}(z)|}dm\right)\\
\times \int_{L_{\gamma_{p_2}}}\frac{ |\frac{u_2}{\epsilon}|}{1 + |\frac{u_2}{\epsilon}|^{2k_2}}
\exp( \nu_2 |\frac{u_2}{\epsilon}|^{k_2})\exp(-\left(\frac{u_2}{\epsilon t_2}\right)^{k_2})\left|\frac{du_2}{u_2}\right|    e^{-\frac{M_{p_1,p'_1}}{|\epsilon|^{k_1}}}.
\end{multline}
The last integral is estimated via the reparametrization $u_2=re^{\gamma_{p_2}\sqrt{-1}}$ and the change of variable $r=|\epsilon|s$ by 
$$\int_0^{\infty}\frac{1}{1+s^2}e^{-\delta_{2}s^{k_2}}ds,$$
for some $\delta_{2}>0$, whenever $t_2\in\mathcal{T}_2\cap D(0,h')$.

The estimates given in the enunciate of Case 2 follows from here.

\begin{figure}
	\centering
	\includegraphics[width=0.6\textwidth]{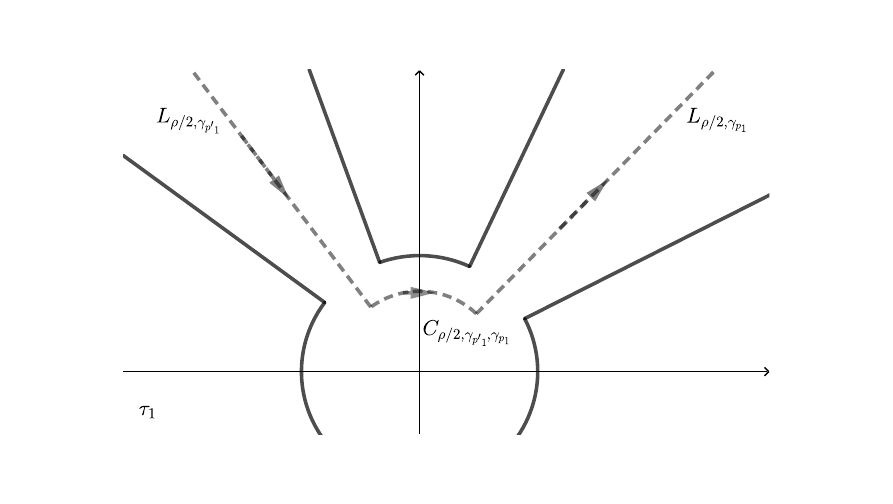}
	\caption{Path deformation in Case 2}
\end{figure}

\textbf{Case 3:} Assume that neither $L_{\gamma_{p_1}}$ coincides with $L_{\gamma_{p'_1}}$, nor $L_{\gamma_{p_2}}$ coincides with $L_{\gamma_{p'_2}}$.

Owing to the fact that $u_1 \mapsto \omega_{\bk}^{\mathfrak{d}_{p_1},\tilde{\mathfrak{d}}_{p_2}}(u_1,u_2,m,\epsilon) \exp( -(\frac{u_1}{\epsilon t_1})^{k_1} )/u_1$ is holomorphic on $D(0,\rho)$ for all
$(m,\epsilon) \in \mathbb{R} \times (D(0,\epsilon_{0}) \setminus \{ 0 \})$, and every $u_2\in L_{\gamma_{p_2}}$ we deform the integration paths with respect to the first time variable and write

$$u_{p_1,p_2}(\bt,z,\epsilon)-u_{p'_1,p'_2}(\bt,z,\epsilon)=J_1-J_2+J_3,$$
where
$$J_1=\frac{k_1k_2}{(2\pi)^{1/2}} \int_{L_{\gamma_{p_1},1}}\int_{L_{\gamma_{p_2}}}\int_{-\infty}^{\infty} \omega_{\bk}^{\mathfrak{d}_{p_1},\tilde{\mathfrak{d}}_{p_2}}(u_1,u_2,m,\epsilon)
e^{-(\frac{u_1}{\epsilon t_1})^{k_1}-(\frac{u_2}{\epsilon t_2})^{k_2}}e^{izm} dm\frac{du_2}{u_2}\frac{du_1}{u_1}.$$

$$J_2=\frac{k_1k_2}{(2\pi)^{1/2}} \int_{L_{\gamma_{p'_1},1}}\int_{L_{\gamma_{p'_2}}}\int_{-\infty}^{\infty} \omega_{\bk}^{\mathfrak{d}_{p'_1},\tilde{\mathfrak{d}}_{p'_2}}(u_1,u_2,m,\epsilon)
e^{-(\frac{u_1}{\epsilon t_1})^{k_1}-(\frac{u_2}{\epsilon t_2})^{k_2}}e^{izm} dm\frac{du_2}{u_2}\frac{du_1}{u_1}.$$

\begin{multline*}
J_3=\frac{k_1k_2}{(2\pi)^{1/2}} \int_{0}^{\frac{\rho_1}{2}e^{i\theta}}\left(\int_{-\infty}^{\infty}\left(\int_{L_{\gamma_{p_2}}} \omega_{\bk}^{\mathfrak{d}_{p_1},\tilde{\mathfrak{d}}_{p_2}}(u_1,u_2,m,\epsilon)
e^{-(\frac{u_2}{\epsilon t_2})^{k_2}} \frac{du_2}{u_2}\right.\right.\\
\left.\left.-\int_{L_{\gamma_{p'_2}}} \omega_{\bk}^{\mathfrak{d}_{p'_1},\tilde{\mathfrak{d}}_{p'_2}}(u_1,u_2,m,\epsilon)
e^{-(\frac{u_2}{\epsilon t_2})^{k_2}} \frac{du_2}{u_2}\right)e^{izm}dm\right)e^{-(\frac{u_1}{\epsilon t_1})^{k_1}}\frac{du_1}{u_1}, 
\end{multline*}
where $\frac{\rho_1}{2}e^{i\theta}$ is such that $\theta$ is an argument between $\gamma_{p_1}$ and $\gamma_{p'_1}$. The path $L_{\gamma_{p_1},1}$ (resp. $L_{\gamma_{p'_1},1}$) consists of the concatenation of the arc of circle connecting $\frac{\rho_1}{2}e^{i\theta}$ with $\frac{\rho_1}{2}e^{i\gamma_{p_1}}$ (resp. with $\frac{\rho_1}{2}e^{i\gamma_{p'_1}}$) and the half line $[\frac{\rho_1}{2}e^{i\gamma_{p_1}},\infty)$ (resp. $[\frac{\rho_1}{2}e^{i\gamma_{p'_1}},\infty)$).

\begin{figure}
	\centering
	\includegraphics[width=0.3\textwidth]{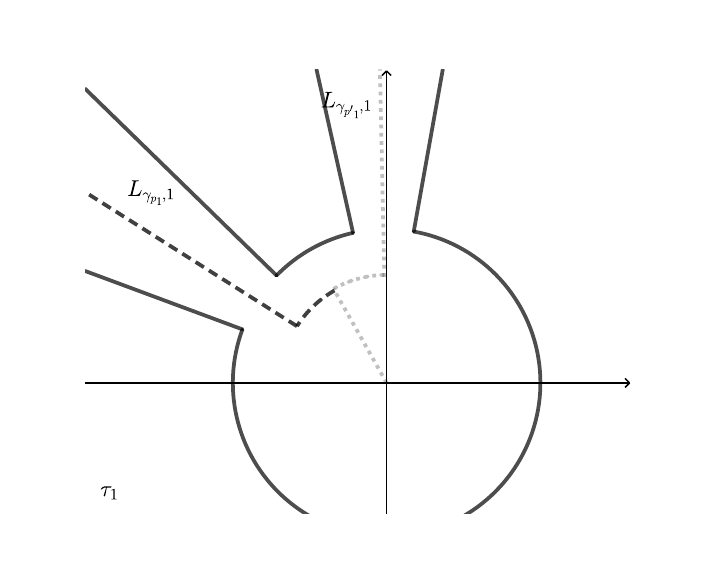} \vline\vline \includegraphics[width=0.3\textwidth]{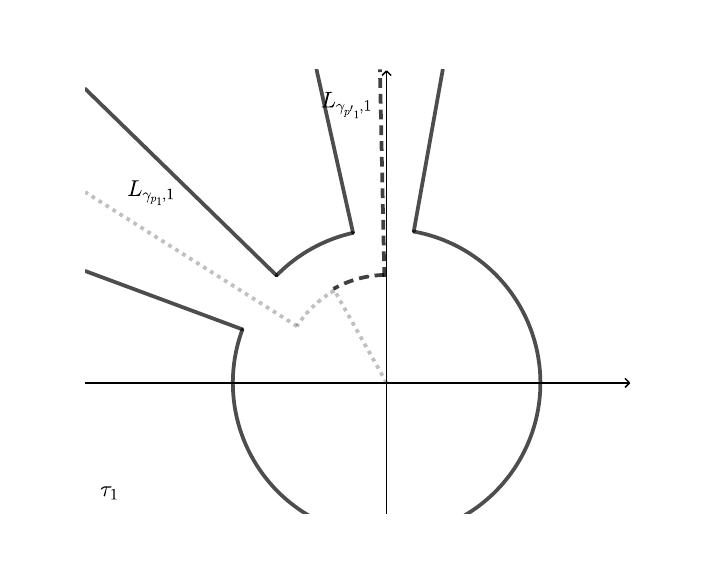} \vline\vline \includegraphics[width=0.3\textwidth]{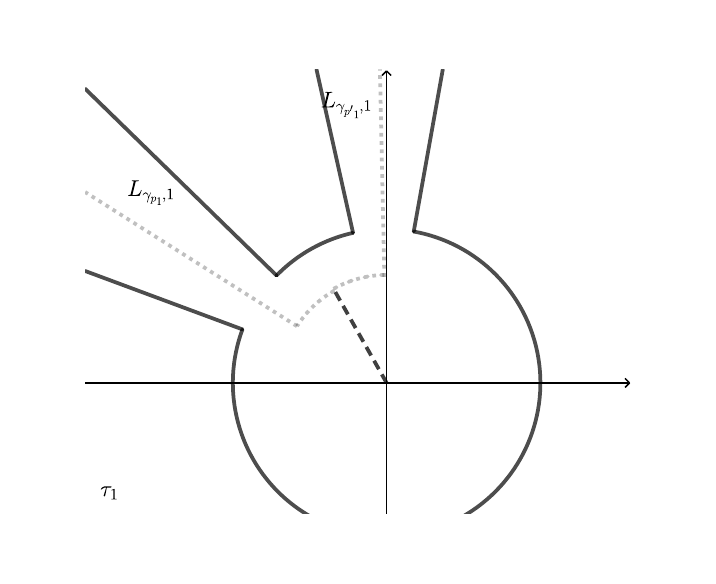}\\
	\includegraphics[width=0.3\textwidth]{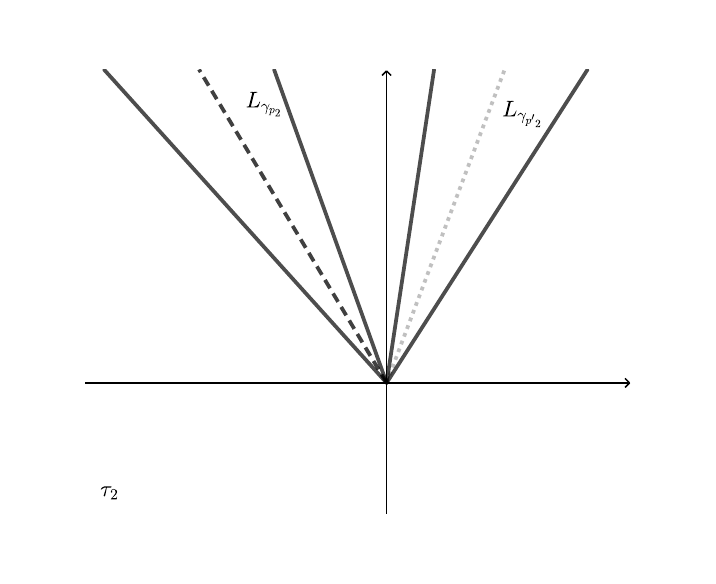} \vline\vline \includegraphics[width=0.3\textwidth]{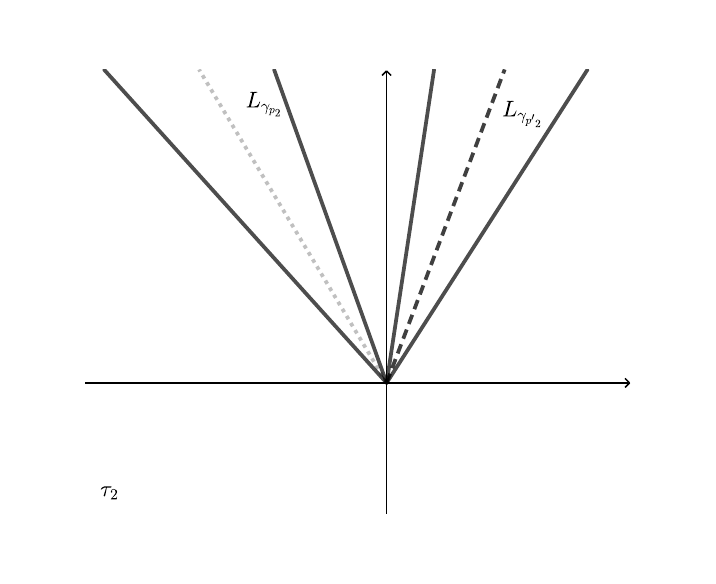} \vline\vline \includegraphics[width=0.3\textwidth]{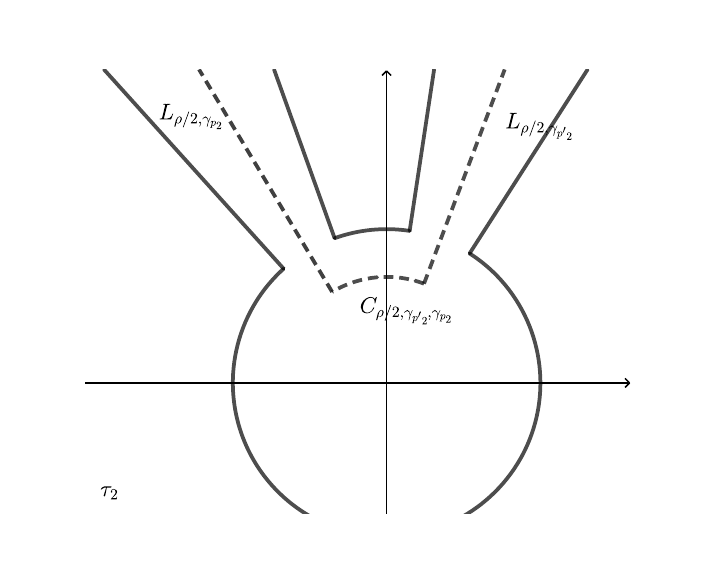}
	\caption{Path deformation in Case 3}
\end{figure}

We first give estimates for $|J_1|$. We have
\begin{multline*}
\left|\int_{L_{\gamma_{p_2}}} \omega_{\bk}^{\mathfrak{d}_{p_1},\tilde{\mathfrak{d}}_{p_2}}(u_1,u_2,m,\epsilon)
e^{-(\frac{u_2}{\epsilon t_2})^{k_2}}\frac{du_2}{u_2}\right|\le \varpi_{\mathfrak{d}_{p_1},\tilde{\mathfrak{d}}_{p_2}}(1+ |m|)^{-\mu} e^{-\beta|m|} \frac{ |\frac{u_1}{\epsilon}|}{1 + |\frac{u_1}{\epsilon}|^{2k_1}}
\exp( \nu_1 |\frac{u_1}{\epsilon}|^{k_1})\\
\times \int_{L_{\gamma_{p_2}}}\left(\frac{ |\frac{u_2}{\epsilon}|}{1 + |\frac{u_2}{\epsilon}|^{2k_2}}\exp(\nu_2\left|\frac{u_2}{\epsilon}\right|^{k_2}\right)|e^{-\left(\frac{u_2}{\epsilon t_2}\right)^{k_2}}|\left|\frac{du_2}{u_2}\right|\\
\le \varpi_{\mathfrak{d}_{p_1},\tilde{\mathfrak{d}}_{p_2}}C_{p_2}(1+ |m|)^{-\mu} e^{-\beta|m|} \frac{ |\frac{u_1}{\epsilon}|}{1 + |\frac{u_1}{\epsilon}|^{2k_1}}
\exp( \nu_1 |\frac{u_1}{\epsilon}|^{k_1}),
\end{multline*}
for some $C_{p_2}>0$, and $t_2\in\mathcal{T}_2\cap D(0,h')$. Using the parametrization $u_2=re^{\gamma_{p_2}\sqrt{-1}}$ and the change of variable $r=|\epsilon|s$. Using analogous estimations as in the Case 1, we arrive at
$$|J_1|\le C_{p,1}e^{-\frac{M_{p,1}}{|\epsilon|^{k_1}}},$$
for some $C_{p,1},M_{p,1}>0$, for all $\epsilon\in\mathcal{E}_{p_1,p_2}\cap \mathcal{E}_{p'_1,p'_2}$, where $t_1\in\mathcal{T}_{1}\cap D(0,h')$ and $t_2\in\mathcal{T}_{2}\cap D(0,h')$, $z\in H_{\beta'}$.

Analogous calculations yield to 
$$|J_2|\le C_{p,2}e^{-\frac{M_{p,2}}{|\epsilon|^{k_1}}},$$
for some $C_{p,2},M_{p,2}>0$, for all $\epsilon\in\mathcal{E}_{p_1,p_2}\cap \mathcal{E}_{p'_1,p'_2}$, where $t_1\in\mathcal{T}_{1}\cap D(0,h')$ and $t_2\in\mathcal{T}_{2}\cap D(0,h')$, $z\in H_{\beta'}$.

In order to give upper bounds for $|J_3|$, we consider
$$\left|\int_{L_{\gamma_{p_2}}} \omega_{\bk}^{\mathfrak{d}_{p_1},\tilde{\mathfrak{d}}_{p_2}}(u_1,u_2,m,\epsilon)
e^{-(\frac{u_2}{\epsilon t_2})^{k_2}} \frac{du_2}{u_2}-\int_{L_{\gamma_{p'_2}}} \omega_{\bk}^{\mathfrak{d}_{p'_1},\tilde{\mathfrak{d}}_{p'_2}}(u_1,u_2,m,\epsilon)
e^{-(\frac{u_2}{\epsilon t_2})^{k_2}} \frac{du_2}{u_2}\right|.$$

Since $u_1$ belongs to the disc $D(0,\rho_1)$, we know that the function 
$$u_2\mapsto \omega_{\bk}^{\mathfrak{d}_{p_1},\tilde{\mathfrak{d}}_{p_2}}(u_1,u_2,m,\epsilon) e^{-(\frac{u_2}{\epsilon t_2})^{k_2}}\frac{1}{u_2}$$
is holomorphic on the disc $D(0,\rho)$. In this framework, one is able to deform the integration path in order to write the difference as the next sum

\begin{multline*}
\int_{L_{\rho_1/2,\gamma_{p_2}}}
\omega_{\bk}^{\mathfrak{d}_{p_1},\tilde{\mathfrak{d}}_{p_2}}(u_1,u_2,m,\epsilon) e^{-(\frac{u_2}{\epsilon t_2})^{k_2}} \frac{du_2}{u_2} \\ 
-\int_{L_{\rho_1/2,\gamma_{p'_2}}}
\omega_{\bk}^{\mathfrak{d}_{p_1},\tilde{\mathfrak{d}}_{p_2}}(u_1,u_2,m,\epsilon) e^{-(\frac{u_2}{\epsilon t_2})^{k_2}} \frac{du_2}{u_2}\\
+ \int_{C_{\rho_1/2,\gamma_{p'_2},\gamma_{p_2}}}
\omega_{\bk}^{\mathfrak{d}_{p_1},\tilde{\mathfrak{d}}_{p_2}}(u_1,u_2,m,\epsilon) e^{-(\frac{u_2}{\epsilon t_2})^{k_2}} \frac{du_2}{u_2}.
\end{multline*}
We get the previous expression is upper estimated by
$$\varpi_{\mathfrak{d}_{p_1},\tilde{\mathfrak{d}}_{p_2}}C_{p_2,p'_2}(1+ |m|)^{-\mu} e^{-\beta|m|} \frac{ |\frac{u_1}{\epsilon}|}{1 + |\frac{u_1}{\epsilon}|^{2k_1}}
\exp( \nu_1 |\frac{u_1}{\epsilon}|^{k_1})\exp\left(-\frac{M_{p_2,p'_2}}{|\epsilon|^{k_2}}\right),$$
for $\epsilon\in\mathcal{E}_{p_1,p_2}\cap \mathcal{E}_{p'_1,p'_2}$,  $t_2\in\mathcal{T}_{2}\cap D(0,h')$, $u_1\in[0,\rho_1/2e^{i\theta}]$.
We finally get 
\begin{multline*}
|J_3|\le\frac{k_1k_2}{(2\pi)^{1/2}} C_{p_2,p'_2}\varpi_{\mathfrak{d}_{p_1},\tilde{\mathfrak{d}}_{p_2}}    \left(\int_{-\infty}^{\infty}(1+|m|)^{-\mu}e^{-\beta|m|}e^{-m|\hbox{Im}(z)|}dm\right)\\
\times\left(\int_{0}^{\rho_1/2e^{i\theta}}\frac{ |\frac{u_1}{\epsilon}|}{1 + |\frac{u_1}{\epsilon}|^{2k_1}}\exp(\nu_1|\frac{u_1}{\epsilon}|^{k_1})|e^{-\left(\frac{u_1}{\epsilon t_1}\right)^{k_1}}|\left|\frac{d u_1}{u_1}\right|\right)\exp\left(-\frac{M_{p_2,p'_2}}{|\epsilon|^{k_2}}\right).
\end{multline*}
We conclude that 
$$|J_3|\le K_{p,3}e^{-\frac{M_{p,3}}{|\epsilon|^{k_2}}},$$
uniformly for $(t_1,t_2)\in (\mathcal{T}_1\cap D(0,h''))\times (\mathcal{T}_2\cap D(0,h''))$ for some $h''>0$, and $z\in H_{\beta'}$ for any fixed $\beta'<\beta$, where $K_{\boldsymbol{p},3},M_{\boldsymbol{p},3}$ are positive constants.

\end{proof}

\textbf{Remark:} Observe that, in case that the path $L_{\gamma_{p_1}}$ coincides with $L_{\gamma_{p'_1}}$, but $L_{\gamma_{p_2}}$ does not coincide with $L_{\gamma_{p'_2}}$, then it is not possible to obtain estimates on the difference of two solutions in the form $\exp(-M/|\epsilon|^{k_2})$, as it happens in Case 2. The reason is that we can not deform the path $L_{\gamma_{p_2}}-L_{\gamma_{p'_2}}$ since the function $w_{\bk}^{\mathfrak{d}_{p_1},\tilde{\mathfrak{d}}_{p_2}}(\btau,m,\epsilon)$ and $w_{\bk}^{\mathfrak{d}_{p'_1},\tilde{\mathfrak{d}}_{p'_2}}(\btau,m,\epsilon)$ are not holomorphic on a disc centered at 0 respect to $\tau_2$.\par

\section{Asymptotics of the problem in the perturbation parameter}

\subsection{$k-$Summable formal series and Ramis-Sibuya Theorem}\label{secborellaplace}

For the sake of completeness, we recall the definition of $k-$Borel summability of formal series with coefficients in a Banach space, and Ramis-Sibuya Theorem. A reference for the details on the first part is~\cite{ba}, whilst the second part of this section can be found in~\cite{ba2}, p. 121, and~\cite{hssi}, Lemma XI-2-6.
 
\begin{defin} Let $k \geq 1$ be an integer. A formal series
$$\hat{X}(\epsilon) = \sum_{j=0}^{\infty}  \frac{ a_{j} }{ j! } \epsilon^{j} \in \mathbb{F}[[\epsilon]]$$
with coefficients in a Banach space $( \mathbb{F}, ||.||_{\mathbb{F}} )$ is said to be $k-$summable
with respect to $\epsilon$ in the direction $d \in \mathbb{R}$ if \medskip

{\bf i)} there exists $\rho \in \mathbb{R}_{+}$ such that the following formal series, called formal
Borel transform of $\hat{X}$ of order $k$ 
$$ \mathcal{B}_{k}(\hat{X})(\tau) = \sum_{j=0}^{\infty} \frac{ a_{j} \tau^{j}  }{ j!\Gamma(1 + \frac{j}{k}) } \in \mathbb{F}[[\tau]],$$
is absolutely convergent for $|\tau| < \rho$, \medskip

{\bf ii)} there exists $\delta > 0$ such that the series $\mathcal{B}_{k}(\hat{X})(\tau)$ can be analytically continued with
respect to $\tau$ in a sector
$S_{d,\delta} = \{ \tau \in \mathbb{C}^{\ast} : |d - \mathrm{arg}(\tau) | < \delta \} $. Moreover, there exist $C >0$, and $K >0$
such that
$$ ||\mathcal{B}(\hat{X})(\tau)||_{\mathbb{F}}
\leq C e^{ K|\tau|^{k}} $$
for all $\tau \in S_{d, \delta}$.
\end{defin}
If this is so, the vector valued Laplace transform of order $k$ of $\mathcal{B}_{k}(\hat{X})(\tau)$ in the direction $d$ is defined by
$$ \mathcal{L}^{d}_{k}(\mathcal{B}_{k}(\hat{X}))(\epsilon) = \epsilon^{-k} \int_{L_{\gamma}}
\mathcal{B}_{k}(\hat{X})(u) e^{ - ( u/\epsilon )^{k} } ku^{k-1}du,$$
along a half-line $L_{\gamma} = \mathbb{R}_{+}e^{i\gamma} \subset S_{d,\delta} \cup \{ 0 \}$, where $\gamma$ depends on
$\epsilon$ and is chosen in such a way that
$\cos(k(\gamma - \mathrm{arg}(\epsilon))) \geq \delta_{1} > 0$, for some fixed $\delta_{1}$, for all
$\epsilon$ in a sector
$$ S_{d,\theta,R^{1/k}} = \{ \epsilon \in \mathbb{C}^{\ast} : |\epsilon| < R^{1/k} \ \ , \ \ |d - \mathrm{arg}(\epsilon) |
< \theta/2 \},$$
where $\frac{\pi}{k} < \theta < \frac{\pi}{k} + 2\delta$ and $0 < R < \delta_{1}/K$. The
function $\mathcal{L}^{d}_{k}(\mathcal{B}_{k}(\hat{X}))(\epsilon)$
is called the $k-$sum of the formal series $\hat{X}(t)$ in the direction $d$. It is bounded and holomorphic on the sector
$S_{d,\theta,R^{1/k}}$ and has the formal series $\hat{X}(\epsilon)$ as Gevrey asymptotic
expansion of order $1/k$ with respect to $\epsilon$ on $S_{d,\theta,R^{1/k}}$. This means that for all
$\frac{\pi}{k} < \theta_{1} < \theta$, there exist $C,M > 0$ such that
$$ ||\mathcal{L}^{d}_{k}(\mathcal{B}_{k}(\hat{X}))(\epsilon) - \sum_{p=0}^{n-1}
\frac{a_p}{p!} \epsilon^{p}||_{\mathbb{F}} \leq CM^{n}\Gamma(1+ \frac{n}{k})|\epsilon|^{n} $$
for all $n \geq 1$, all $\epsilon \in S_{d,\theta_{1},R^{1/k}}$.\medskip

 Multisummability of a formal power series is a recursive process that allows to compute the sum of a formal power series in different Gevrey orders. One of the approaches to multisummability is that stated by W. Balser, which can be found in~\cite{ba}, Theorem 1, p.57. Roughly speaking, given a formal power series $\hat{f}$ which can be decomposed into a sum $\hat{f}(z)=\hat{f}_1(z)+\ldots+\hat{f}_m(z)$ such that each of the terms $\hat{f}_j(z)$ is $k_j$-summable, with sum given by $f_j$, then, $\hat{f}$ turns out to be multisummable, and its multisum is given by $f_1(z)+\ldots+f_m(z)$. More precisely, one has the following definition.

\begin{defin} Let $(\mathbb{F},\left\|\cdot\right\|_{\mathbb{F}})$ be a complex Banach space and let $0<k_2<k_1$. Let $\mathcal{E}$ be a bounded open sector with vertex at 0, and opening $\frac{\pi}{k_1}+\delta_1$ for some $\delta_1>0$, and let $\mathcal{F}$ be a bounded open sector with vertex at the origin in $\C$, with opening $\frac{\pi}{k_2}+\delta_2$, for some $\delta_2>0$ and such that $\mathcal{E}\subseteq\mathcal{F}$ holds.\smallskip

A formal power series $\hat{f}(\epsilon)\in\mathbb{F}[[\epsilon]]$ is said to be $(k_1,k_2)-$summable on $\mathcal{E}$ if there exist $\hat{f}_2(\epsilon)\in\mathbb{F}[[\epsilon]]$ which is $k_2-$summable on $\mathcal{F}$, with $k_2$-sum given by $f_2:\mathcal{F}\to\mathbb{F}$, and $\hat{f}_1(\epsilon)\in\mathbb{F}[[\epsilon]]$ which is $k_1-$summable on $\mathcal{E}$, with $k_1$-sum given by $f_1:\mathcal{E}\to\mathbb{F}$, such that $\hat{f}=\hat{f}_1+\hat{f}_2$. Furthermore, the holomorphic function $f(\epsilon)=f_1(\epsilon)+f_2(\epsilon)$ on $\mathcal{E}$ is called the $(k_1,k_2)-$sum of $\hat{f}$ on $\mathcal{E}$. In that situation, $f(\epsilon)$ can be obtained from the analytic continuation of the $k_2-$Borel transform of $\hat{f}$ by the successive application of accelerator operators and Laplace transform of order $k_1$, see Section 6.1 in~\cite{ba}.
\end{defin}

We recall the reader the classical version of Ramis-Sibuya Theorem for the Gevrey asymptotics as stated in~\cite{hssi} in the framework of our good covering $\{ \mathcal{E}_{p_1,p_2} \}_{\begin{subarray}{l} 0 \leq p_1 \leq \varsigma_1 - 1\\0 \leq p_2 \leq \varsigma_2 - 1\end{subarray}}$, given above in Definition 3.

\noindent {\bf Theorem (RS)} {\it Let $0<k_1<k_2$ be integer numbers. Let $(\mathbb{F},||.||_{\mathbb{F}})$ be a Banach space over $\mathbb{C}$ and
$\{ \mathcal{E}_{p_1,p_2} \}_{\begin{subarray}{l} 0 \leq p_1 \leq \varsigma_1 - 1\\0 \leq p_2 \leq \varsigma_2 - 1\end{subarray}}$ be a good covering in $\mathbb{C}^{\ast}$, such that the aperture of every sector is slightly larger than $\pi/k_2$. For all
$0 \leq p_1 \leq \varsigma_1 - 1$, $0\le p_2\le \varsigma_2-1$, let $G_{p_1,p_2}$ be a holomorphic function from $\mathcal{E}_{p_1,p_2}$ into
the Banach space $(\mathbb{F},||.||_{\mathbb{F}})$ and let the cocycle $\Theta_{(p_1,p_2)(p'_1,p'_2)}(\epsilon) = G_{p_1,p_2}(\epsilon) - G_{p'_1,p'_2}(\epsilon)$
be a holomorphic function from the sector $Z_{(p_1,p_2),(p'_1,p'_2)} = \mathcal{E}_{p_1,p_2} \cap \mathcal{E}_{p'_1,p'_2}\neq\emptyset$ into $\mathbb{E}$. We make the following assumptions.\medskip

\noindent {\bf 1)} The functions $G_{p_1,p_2}(\epsilon)$ are bounded as $\epsilon \in \mathcal{E}_{p_1,p_2}$ tends to the origin
in $\mathbb{C}$, for all $0 \leq p_1 \leq \varsigma_1 - 1$ and all $0\le p_2\le \varsigma_2-1$.\medskip

\noindent {\bf 2)} The functions $\Theta_{(p_1,p_2)(p'_1,p'_2)}(\epsilon)$ are exponentially flat of order $1/k_1$ on $Z_{(p_1,p_2)(p'_1,p'_2)}$, for all
$0 \leq p_1,p'_1 \leq \varsigma_1-1$,  and $0\le p_2,p'_2 \le \varsigma_2-1$. This means that there exist constants $C_{p_1,p_2,p'_1,p'_2},A_{p_1,p_2,p'_1,p'_2}>0$ such that
$$ ||\Theta_{(p_1,p_2)(p'_1,p'_2)}(\epsilon)||_{\mathbb{F}} \leq C_{p_1,p_2,p'_1,p'_2}e^{-A_{p_1,p_2,p'_1p'_2}/|\epsilon|^{k_1}} $$
for all $\epsilon \in Z_{(p_1,p_2),(p'_1,p'_2)}$, all $0 \leq p_1,p'_1 \leq \varsigma_1-1$ and $0\le p_2,p'_2\le \varsigma_2-1$.\medskip

Then, for all $0 \leq p_1 \leq \nu_1 - 1$ and $0\le p_2\le\varsigma_2-1$, the functions $G_{p_1,p_2}(\epsilon)$ admit a common formal power series $\hat{G}(\epsilon) \in \mathbb{F}[[\epsilon]]$ as asymptotic expansion of Gevrey order $1/k_1$.}

A novel version of Ramis-Sibuya Theorem has been developed in~\cite{takei}, and has provided successful results in previous works by the authors,~\cite{lama1},~\cite{lama2,family1}. A version of the result in two different levels which fits our needs is now given without proof, which can be found in~\cite{lama1},~\cite{lama2}.

\vspace{0.3cm}

\noindent {\bf Theorem (multilevel-RS)} {\it Assume that $0<k_2<k_1$ are integer numbers. Let $(\mathbb{F},||.||_{\mathbb{F}})$ be a Banach space over $\mathbb{C}$ and
$\{ \mathcal{E}_{p_1,p_2} \}_{\begin{subarray}{l} 0 \leq p_1 \leq \varsigma_1 - 1\\0 \leq p_2 \leq \varsigma_2 - 1\end{subarray}}$ be a good covering in $\mathbb{C}^{\ast}$, where all the sectors have an opening slightly larger than $\pi/k_1$. For all
$0 \leq p_1 \leq \varsigma_1 - 1$ and $0\le p_2\le \varsigma_2-1$, let $G_{p_1,p_2}$ be a holomorphic function from $\mathcal{E}_{p_1,p_2}$ into
the Banach space $(\mathbb{F},||.||_{\mathbb{F}})$ and for every $(p_1,p_2),(p'_1,p'_2)\in\{0,\ldots,\varsigma_1-1\}\times\{0,\ldots,\varsigma_2-1\}$ such that $\mathcal{E}_{p_1,p_2}\cap\mathcal{E}_{p'_1,p'_2}\neq\emptyset$ we define $\Theta_{(p_1,p_2)(p'_1,p'_2)}(\epsilon) = G_{p_1,p_2}(\epsilon) - G_{p'_1,p'_2}(\epsilon)$
be a holomorphic function from the sector $Z_{(p_1,p_2),(p'_1,p'_2)} = \mathcal{E}_{p_1,p_2} \cap \mathcal{E}_{p'_1,p'_2}$ into $\mathbb{F}$.
We make the following assumptions.\medskip

\noindent {\bf 1)} The functions $G_{p_1,p_2}(\epsilon)$ are bounded as $\epsilon \in \mathcal{E}_{p_1,p_2}$ tends to the origin
in $\mathbb{C}$, for all $0 \leq p_1 \leq \varsigma_1 - 1$ and $0\le p_2\le \varsigma_2-1$.\medskip

\noindent {\bf 2)} $(\{0,\ldots,\varsigma_1-1\}\times\{0,\ldots,\varsigma_2\})^2=\mathcal{U}_0\cup\mathcal{U}_{k_1}\cup\mathcal{U}_{k_2}$, where 

$((p_1,p_2),(p'_1,p'_2))\in\mathcal{U}_0$ iff $\mathcal{E}_{p_1,p_2}\cap\mathcal{E}_{p'_1,p'_2}=\emptyset$,

$((p_1,p_2),(p'_1,p'_2))\in\mathcal{U}_{k_1}$ iff $\mathcal{E}_{p_1,p_2}\cap\mathcal{E}_{p'_1,p'_2}\neq\emptyset$ and 
$$ ||\Theta_{(p_1,p_2),(p'_1,p'_2)}(\epsilon)||_{\mathbb{F}} \leq C_{p_1,p_2,p'_1,p'_2}e^{-A_{p_1,p_2,p'_1,p'_2}/|\epsilon|^{k_1}} $$
for all $\epsilon \in Z_{(p_1,p_2),(p'_1,p'_2)}$.

$((p_1,p_2),(p'_1,p'_2))\in\mathcal{U}_{k_2}$ iff $\mathcal{E}_{p_1,p_2}\cap\mathcal{E}_{p'_1,p'_2}\neq\emptyset$ and 
$$ ||\Theta_{(p_1,p_2),(p'_1,p'_2)}(\epsilon)||_{\mathbb{F}} \leq C_{p_1,p_2,p'_1,p'_2}e^{-A_{p_1,p_2,p'_1,p'_2}/|\epsilon|^{k_2}} $$
for all $\epsilon \in Z_{(p_1,p_2),(p'_1,p'_2)}$.

Then, there exists a convergent power series $a(\epsilon)\in \mathbb{F}\{\epsilon\}$ and two formal power series $\hat{G}^1(\epsilon),\hat{G}^2(\epsilon)\in\mathbb{F}[[\epsilon]]$ such that $G_{p_1,p_2}(\epsilon)$ can be split in the form
$$G_{p_1,p_2}(\epsilon)=a(\epsilon)+G^{1}_{p_1,p_2}(\epsilon)+G^{2}_{p_1,p_2}(\epsilon),$$
where $G^{j}_{p_1,p_2}(\epsilon)\in\mathcal{O}(\mathcal{E}_{p_1,p_2},\mathbb{F})$, and admits $\hat{G}^j(\epsilon)$ as its asymptotic expansion of Gevrey order $1/k_j$ on $\mathcal{E}_{p_1,p_2}$, for $j\in\{1,2\}$.\smallskip

Moreover, assume that
$$\{((p_1^0,p_2^0),(p_1^1,p_2^1)), ((p_1^1,p_2^1),(p_1^2,p_2^2)), \ldots, ((p_1^{2y-1},p_2^{2y-1}),(p_1^{2y},p_2^{2y}))     \}$$
is a subset of $\mathcal{U}_{k_1}$, for some positive integer $y$, and
$$\mathcal{E}_{p_1^{y},p_2^y}\subseteq S_{\pi/k_2}\subseteq\bigcup_{0\le j\le 2y}\mathcal{E}_{p_1^{j},p_2^{j}},$$
for some sector $S_{\pi/k_2}$ with opening larger than $\pi/k_2$. Then, the formal power series $\hat{G}(\epsilon)$ is $(k_1,k_2)-$summable on $\mathcal{E}_{p_1^y,p_2^y}$ and its $(k_1,k_2)-$sum is $G_{p_1^y,p_2^y}(\epsilon)$ on $\mathcal{E}_{p_1^y,p_2^y}$.}




\subsection{Formal solution and asymptotic behavior in the complex parameter}

The second and third main results state the existence of a formal power series in the perturbation parameter $\epsilon$, with coefficients in the Banach space $\mathbb{F}$ of holomorphic and bounded functions on $(\mathcal{T}_1\cap D(0,h''))\times (\mathcal{T}_2\cap D(0,h''))\times H_{\beta'}$, with the norm of the supremum. Here $h''$, $\mathcal{T}_1,\mathcal{T}_2$ are determined in Theorem~\ref{teo1}.

It is worth observing the different asymptotic behavior of the analytic solutions of the problem depending on $k_1$ and $k_2$. More precisely, in case that $k_1<k_2$, Theorem~\ref{teo2} shows a Gevrey estimates occurrence, whilst $k_2<k_1$ displays a multisummability phenomenon; in contrast to the results observed in~\cite{family1}, where multisummability is always observed. 

\begin{theo}\label{teo2}
Let $k_2>k_1$. Under the assumptions of Theorem~\ref{teo1}, a formal power series
$$\hat{u}(\bt,z,\epsilon)=\sum_{m\ge0}H_m(\bt,z)\epsilon^m/m!\in\mathbb{F}[[\epsilon]]$$
exists, with the following properties. $\hat{u}$ is a formal solution of (\ref{ICP_main0}). Moreover, for every $p_1\in\{0,\ldots,\varsigma_1-1\}$ and $p_2\in\{0,\ldots,\varsigma_2-1\}$, the function $u_{p_1,p_2}(\bt,z,\epsilon)$ admits $\hat{u}(\bt,z,\epsilon)$ as asymptotic expansion of Gevrey order $1/k_1$. This means that 
$$\sup_{\bt\in (\mathcal{T}_1\cap D(0,h''))\times (\mathcal{T}_2\cap D(0,h'')),z\in H_{\beta'}}\left|u_{p_1,p_2}(\bt,z,\epsilon)-\sum_{m=0}^{N-1}h_m(\bt,z)\frac{\epsilon^m}{m!}\right|\le CM^{N}\Gamma(1+\frac{N}{k_1})|\epsilon|^{N},$$
for every $\epsilon\in\mathcal{E}_{p_1,p_2}$ and all integer $N\ge0$.\smallskip
\end{theo}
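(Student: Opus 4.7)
The plan is to view the family $\{u_{p_1,p_2}\}$ as a cocycle on the good covering with values in the Banach space $\mathbb{F}$ of bounded holomorphic functions on $(\mathcal{T}_1\cap D(0,h''))\times(\mathcal{T}_2\cap D(0,h''))\times H_{\beta'}$, and then to apply the classical Ramis--Sibuya theorem at the single level $k_1$. Concretely, for every $(p_1,p_2)$ set
$$G_{p_1,p_2}(\epsilon):=(\bt,z)\mapsto u_{p_1,p_2}(\bt,z,\epsilon),\qquad \epsilon\in\mathcal{E}_{p_1,p_2},$$
which, by Theorem~\ref{teo1}, defines a bounded holomorphic $\mathbb{F}$-valued function on each $\mathcal{E}_{p_1,p_2}$. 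Next I would verify that the cocycle $\Theta_{(p_1,p_2),(p'_1,p'_2)}:=G_{p_1,p_2}-G_{p'_1,p'_2}$ is exponentially flat of order $1/k_1$ on each nonempty intersection. This is where the hypothesis $k_2>k_1$ plays its role: since $|\epsilon|<1$ implies $|\epsilon|^{k_2}<|\epsilon|^{k_1}$, we have $\exp(-M/|\epsilon|^{k_2})\le\exp(-M/|\epsilon|^{k_1})$, so the max-type bound (\ref{exp_small_difference_u_p12}) of Case~3 is dominated by an $\exp(-M/|\epsilon|^{k_1})$ estimate, while Case~2 already gives this decay directly. Case~1 is vacuous. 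Thus all hypotheses of Theorem~(RS) at level $k_1$ are satisfied.

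Applying Theorem~(RS), I obtain a formal series $\hat{u}(\bt,z,\epsilon)=\sum_{m\ge 0}H_m(\bt,z)\epsilon^m/m!\in\mathbb{F}[[\epsilon]]$ which is the common Gevrey-$1/k_1$ asymptotic expansion of every $u_{p_1,p_2}$ on $\mathcal{E}_{p_1,p_2}$. This yields the quantitative estimate announced in the theorem by the very definition of Gevrey-$1/k_1$ asymptotic expansion.

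It remains to check that $\hat{u}$ is a formal solution of (\ref{ICP_main0}). The idea is to expand the equation satisfied by a chosen analytic representative $u_{p_1,p_2}$ in powers of $\epsilon$ and identify coefficients. Since each differential operator appearing in (\ref{ICP_main0}) involves only finitely many $\partial_{t_1},\partial_{t_2},\partial_z$ together with polynomial multipliers in $t_1,t_2,\epsilon$, and since $\hat{u}$ is the Gevrey asymptotic expansion of $u_{p_1,p_2}$, the standard fact that asymptotic expansions commute with partial derivatives with respect to the parameters held fixed (here $t_1,t_2,z$) and with multiplication by holomorphic symbols of $\epsilon$ entails that substituting $\hat{u}$ into (\ref{ICP_main0}) yields the asymptotic expansion of the difference of the two sides evaluated at $u_{p_1,p_2}$, which is zero. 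Identification of the coefficients of $\epsilon^N$ then gives a triangular recursion on the $H_m(\bt,z)$ showing that the resulting formal identity holds term by term, so $\hat{u}$ solves (\ref{ICP_main0}) formally.

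The main obstacle is the last step: one must be careful with the nonlinear Fourier-convolution term $(P_1u)(P_2u)$ and with the fact that $\hat{u}$ is only a Gevrey-$1/k_1$ asymptotic expansion, not a convergent series. Handling the product requires invoking that Gevrey asymptotic expansions form a differential algebra (so that the product of asymptotic expansions is the asymptotic expansion of the product), and then carefully matching the coefficients of $\epsilon^N$ in both members using the initial conditions $u(t_1,0,z,\epsilon)=u(0,t_2,z,\epsilon)\equiv 0$, which translate into $H_m(t_1,0,z)=H_m(0,t_2,z)\equiv 0$ for every $m\ge 0$.
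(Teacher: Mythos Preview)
Your proposal is correct and follows essentially the same approach as the paper: define the $\mathbb{F}$-valued cocycle $G_{p_1,p_2}$, use $k_2>k_1$ to absorb the Case~3 bound into an $e^{-M/|\epsilon|^{k_1}}$ estimate, apply Ramis--Sibuya at level $k_1$, and then verify that $\hat{u}$ solves (\ref{ICP_main0}) formally. The only stylistic difference is in that last verification: the paper differentiates the analytic equation $m$ times with respect to $\epsilon$, lets $\epsilon\to 0$ using $\lim_{\epsilon\to 0}\|\partial_\epsilon^\ell u_{p_1,p_2}-H_\ell\|_{\mathbb{F}}=0$, and reads off an explicit recursion on the $H_m$, whereas you invoke the differential-algebra property of Gevrey asymptotic expansions directly; both arguments are equivalent and standard.
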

\begin{proof}
Let $(u_{p_1,p_2}(\bt,z,\epsilon))_{\begin{subarray}{l}0\le p_1\le \varsigma_1-1\\0\le p_2\le \varsigma_2-1\end{subarray}}$ be the family constructed in Theorem~\ref{teo1}. We recall that $(\mathcal{E}_{p_1,p_2})_{\begin{subarray}{l}0\le p_1\le \varsigma_1-1\\0\le p_2\le \varsigma_2-1\end{subarray}}$ is a good covering in $\C^{\star}$, with all its components being finite sectors of opening slightly larger than $\pi/k_2$. 

The function $G_{p_1,p_2}(\epsilon):=(t_1,t_2,z)\mapsto u_{p_1,p_2}(t_1,t_2,z,\epsilon)$ belongs to $\mathcal{O}(\mathcal{E}_{p_1,p_2},\mathbb{F})$. We consider $\{(p_1,p_2),(p'_1,p'_2)\}$ such that $(p_1,p_2)$ and $(p'_1,p'_2)$ belong to $\{0,\ldots,\varsigma_1-1\}\times \{0,\ldots,\varsigma_2-1\}$, and $\mathcal{E}_{p_1,p_2}$ and $\mathcal{E}_{p'_1,p'_2}$ are consecutive sectors in the good covering, so their intersection is not empty. In view of (\ref{exp_small_difference_u_p11}) and (\ref{exp_small_difference_u_p12}), one has that $\Delta_{(p_1,p_2),(p'_1,p'_2)}(\epsilon):=G_{p_1,p_2}(\epsilon)- G_{p'_1,p'_2}(\epsilon)$ satisfies exponentially flat bounds of Gevrey order $k_1$, due to $\mathcal{U}_{k_1}$ coincides with $\{0,\ldots,\varsigma_1\}\times\{0,\ldots,\varsigma_2\}$. Ramis-Sibuya Theorem guarantees the existence of a formal power series $\hat{G}(\epsilon)\in\mathbb{F}[[\epsilon]]$ such that $G_{p_1,p_2}$ admits $\hat{G}(\epsilon)$ as its Gevrey asymptotic expansion of order $k_1$, say
$$\hat{G}(\epsilon)=:\hat{u}(\bt,z,\epsilon)=\sum_{m\ge0}H_m(\bt,z)\frac{\epsilon^{m}}{m!}.$$
Let us check that $\hat{u}(\bt,z,\epsilon)$ is a formal solution of (\ref{ICP_main0}). For every $0\le p_1\le \varsigma_1-1$, $0\le p_2\le \varsigma_2-1$, the existence of an asymptotic expansion concerning $G_{p_1,p_2}(\epsilon)$ and $\hat{G}(\epsilon)$ implies that
\begin{equation}\label{e1363}
\lim_{\epsilon\to 0,\epsilon\in\mathcal{E}_{p_1,p_2}}\sup_{(\bt,z)\in(\tau_1\cap D(0,h''))\times (\tau_2\cap D(0,h''))\times H_{\beta'}}|\partial_{\epsilon}^{\ell}u_{p_1,p_2}(\bt,z,\epsilon)-H_{\ell}(\bt)|=0,
\end{equation} 
for every $\ell\in\mathbb{N}$. By construction, the function $u_{p_1,p_2}(\bt,z,\epsilon)$ is a solution of (\ref{ICP_main0}). Taking derivatives of order $m\ge0$ with respect to $\epsilon$ on that equation yield

\begin{multline}\label{e1367}
Q(\partial_{z})\partial_{t_2}+\sum_{m_1+m_2=m}\partial_\epsilon^{m_1}(\epsilon^{\tilde{\Delta}_2})t_2^{\tilde{d}_2}\partial_{t_2}^{\tilde{\delta}_{D_2}}R_{D_1,D_2}(\partial_z)\partial_\epsilon^{m_2}u_{p_1,p_2}(\bt,z,\epsilon)\\
+\sum_{m_1+m_2=m}\partial_\epsilon^{m_1}(\epsilon^{\tilde{\Delta}_3})t_2^{\tilde{d}_3}\partial_{t_2}^{\tilde{\delta}_{D_3}}R_{D_3}(\partial_z)\partial_\epsilon^{m_2}u_{p_1,p_2}(\bt,z,\epsilon)\\
 =\sum_{m_1+m_2=m}\frac{m!}{m_1!m_2!}\left(\sum_{m_{11}+m_{12}=m_1}\frac{m_1!}{m_{11}!m_{12}!}\partial_\epsilon^{m_{11}}P_1(\partial_z,\epsilon)\partial_{\epsilon}^{m_{12}}u_{p_1,p_2}(\bt,z,\epsilon)\right)\\
\times \left(\sum_{m_{21}+m_{22}=m_2}\frac{m_2!}{m_{21}!m_{22}!}\partial_\epsilon^{m_{21}}P_2(\partial_z,\epsilon)\partial_{\epsilon}^{m_{22}}u_{p_1,p_2}(\bt,z,\epsilon)\right)\\
+\sum_{\stackrel{0\le l_j\le D_j}{j=1,2}}\left(\sum_{m_{1}+m_2=m}\frac{m!}{m_1!m_2!}\partial_\epsilon^{m_1}(\epsilon^{\Delta_{l_1,l_2}})t_1^{d_{l_1}}t_2^{\tilde{d}_{l_2}}\partial_{t_1}^{\delta_{l_1}}\partial_{t_2}^{\tilde{\delta}_{l_2}}R_{l_1,l_2}(\partial_z)\partial_\epsilon^{m_2}u_{p_1,p_2}(\bt,z,\epsilon)\right)\\
+ \partial_{\epsilon}^{m}f(\bt,z,\epsilon),
\end{multline}
for every $m\ge 0$ and $(\bt,z,\epsilon)\in (\mathcal{T}_1\cap D(0,h''))\times (\mathcal{T}_2\cap D(0,h''))\times H_{\beta'}\times\mathcal{E}_{p_1,p_2}$. Tending $\epsilon\to 0$ in (\ref{e1367}) together with (\ref{e1363}), we obtain a recursion formula for the coefficients of the formal solution.

\begin{multline}\label{e1368}
Q_1(\partial_{z})Q_2(\partial_z)\partial_{t_1}\partial_{t_2}H_{m}(\bt,z)+\frac{m!}{(m-\tilde{\Delta}_2)!}t_2^{\tilde{d}_2}\partial_{t_2}^{\tilde{\delta}_{D_2}}R_{D_1,D_2}(\partial_z)H_{m-\tilde{\Delta}_2}(\bt,z)\\
\hfill+\frac{m!}{(m-\tilde{\Delta}_3)!}t_2^{\tilde{d}_3}\partial_{t_2}^{\tilde{\delta}_{D_3}}R_{D_3}(\partial_z)H_{m-\tilde{\Delta}_3}(\bt,z)\\
=\sum_{m_{1}+m_{2}=m} \frac{m!}{m_{1}!m_{2}!}\left(\sum_{m_{11}+m_{12}=m_1}\frac{m_1!}{m_{11}!m_{12}!}\partial_\epsilon^{m_{11}}P_1(\partial_z,0)H_{m_{12}}(\bt,z)\right)\hfill\\
\times \left(\sum_{m_{21}+m_{22}=m_2}\frac{m_2!}{m_{21}!m_{22}!}\partial_\epsilon^{m_{21}}P_2(\partial_z,0)H_{m_{12}}(\bt,z)\right)\\
+\sum_{0\le l_1\le D_1,0\le l_2\le D_2}\frac{m!}{(m-\Delta_{l_1,l_2})!}t_1^{d_{l_1}}t_2^{\tilde{d}_{l_2}}\partial_{t_1}^{\delta_{l_1}}\partial_{t_2}^{\tilde{\delta}_{l_2}}R_{l_1,l_2}(\partial_z)H_{m-\Delta_{l_1,l_2}}(\bt,z)\\
+ \partial_{\epsilon}^{m}f(\bt,z,0),
\end{multline}
for every $m\ge \max\{\max_{1\le l_1\le D_1,1\le l_2\le D_2}\Delta_{l_1,l_2},\tilde{\Delta}_2,\tilde{\Delta}_3\}$, and $(\bt,z,\epsilon)\in (\mathcal{T}_1\cap D(0,h''))\times (\mathcal{T}_2\cap D(0,h''))\times H_{\beta'}$. From the analyticity of $f$ with respect to $\epsilon$ in a vicinity of the origin we get
\begin{equation}
f(\bt,z,\epsilon) = \sum_{m \geq 0} \frac{(\partial_{\epsilon}^{m}f)(\bt,z,0)}{m!}\epsilon^{m}, \label{Taylor_f}
\end{equation}
for every $\epsilon\in D(0,\epsilon_0)$ and $(\bt,z)$ as above. On the other hand, a direct inspection from the recursion formula (\ref{e1368}) and (\ref{Taylor_f}) allow us to affirm that the formal power series $\hat{u}(\bt,z,\epsilon) = \sum_{m \geq 0} H_{m}(\bt,z)\epsilon^{m}/m!$ solves the equation (\ref{ICP_main0}).
\end{proof}

\begin{theo}\label{teo3}
Let $k_1>k_2$. Under the assumptions of Theorem~\ref{teo1}, a formal power series
$$\hat{u}(\bt_1,t_2,z,\epsilon)=\sum_{m\ge0}h_m(t_1,t_2,z)\epsilon^m/m!\in\mathbb{F}[[\epsilon]]$$
exists, with the following properties. $\hat{u}$ is a formal solution of (\ref{ICP_main0}). In addition to that, $\hat{u}$ can be split in the form
$$\hat{u}(\bt,z,\epsilon)=a(\bt,z,\epsilon)+\hat{u}_{1}(\bt,z,\epsilon)+\hat{u}_{2}(\bt,z,\epsilon),$$
where $a(\bt,z,\epsilon)\in\mathbb{F}\{\epsilon\}$, and $\hat{u}_{1},\hat{u}_{2}\in\mathbb{F}[[\epsilon]]$. Moreover, for every $p_1\in\{0,\ldots,\varsigma_1-1\}$ and $p_2\in\{0,\ldots,\varsigma_2-1\}$, the function $u_{p_1,p_2}(\bt,z,\epsilon)$ can be written as
$$u_{p_1,p_2}(\bt,z,\epsilon)=a(\bt,z,\epsilon)+u^1_{p_1,p_2}(\bt,z,\epsilon)+u^2_{p_1,p_2}(\bt,z,\epsilon),$$
where $\epsilon\mapsto u^j_{p_1,p_2}(\bt,z,\epsilon)$ is an $\mathbb{F}-$valued function which admits $\hat{u}_{j}(\bt,z,\epsilon)$ as its $k_j-$Gevrey asymptotic expansion on $\mathcal{E}_{p_1,p_2}$, for $j=1,2$.

Moreover, assume that
$$\{((p_1^0,p_2^0),(p_1^1,p_2^1)), ((p_1^1,p_2^1),(p_1^2,p_2^2)), \ldots, ((p_1^{2y-1},p_2^{2y-1}),(p_1^{2y},p_2^{2y}))     \}$$
is a subset of $\mathcal{U}_{k_1}$, for some positive integer $y$, and
$$\mathcal{E}_{p_1^{y},p_2^y}\subseteq S_{\pi/k_2}\subseteq\bigcup_{0\le j\le 2y}\mathcal{E}_{p_1^{j},p_2^{j}},$$
for some sector $S_{\pi/k_2}$ with opening larger than $\pi/k_2$. Then, $\hat{u}(\bt,z,\epsilon)$ is $(k_1,k_2)-$summable on $\mathcal{E}_{p_1^y,p_2^y}$ and its $(k_1,k_2)-$sum is $u_{p_1^y,p_2^y}(\epsilon)$ on $\mathcal{E}_{p_1^y,p_2^y}$.
\end{theo}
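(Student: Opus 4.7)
The plan is to deduce the result from the multilevel Ramis-Sibuya theorem applied to the cocycle of analytic solutions produced by Theorem~\ref{teo1}, and then to verify that the resulting formal power series is actually a formal solution of (\ref{ICP_main0}) by repeating the derivation-and-limit argument from the proof of Theorem~\ref{teo2}.

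I would first set $G_{p_1,p_2}(\epsilon) := (\bt,z) \mapsto u_{p_1,p_2}(\bt,z,\epsilon)$, an element of $\mathcal{O}(\mathcal{E}_{p_1,p_2},\mathbb{F})$, bounded as $\epsilon \to 0$ thanks to the Laplace representation (\ref{e817}) together with the estimate (\ref{e807}). For consecutive pairs $((p_1,p_2),(p'_1,p'_2))$ with $\mathcal{E}_{p_1,p_2}\cap\mathcal{E}_{p'_1,p'_2}\neq\emptyset$, Theorem~\ref{teo1} leaves exactly two possibilities: Case 2 yields $\|G_{p_1,p_2}-G_{p'_1,p'_2}\|_{\mathbb{F}} \leq K_p e^{-M_p/|\epsilon|^{k_1}}$, while Case 3 yields $\|G_{p_1,p_2}-G_{p'_1,p'_2}\|_{\mathbb{F}} \leq K_p\max\{e^{-M_p/|\epsilon|^{k_1}},e^{-M_p/|\epsilon|^{k_2}}\}$. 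Since $k_1>k_2$ and $|\epsilon|<1$, one has $|\epsilon|^{k_1}<|\epsilon|^{k_2}$ so that maximum equals $e^{-M_p/|\epsilon|^{k_2}}$. Therefore Case 3 pairs fall into $\mathcal{U}_{k_2}$ and Case 2 pairs into $\mathcal{U}_{k_1}$. Moreover, Definition~\ref{goodcovering} prescribes openings slightly larger than $\pi/k_1$ when $k_2<k_1$, matching exactly the aperture hypothesis of the multilevel RS theorem.

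Applying that theorem then produces a convergent series $a(\epsilon)\in\mathbb{F}\{\epsilon\}$, two formal series $\hat{u}_1,\hat{u}_2\in\mathbb{F}[[\epsilon]]$, and, for each $\mathcal{E}_{p_1,p_2}$, a decomposition $u_{p_1,p_2}(\bt,z,\epsilon)=a(\bt,z,\epsilon)+u^1_{p_1,p_2}(\bt,z,\epsilon)+u^2_{p_1,p_2}(\bt,z,\epsilon)$ in which $u^j_{p_1,p_2}$ admits $\hat{u}_j$ as Gevrey asymptotic expansion of order $1/k_j$, for $j=1,2$. I put $\hat{u}(\bt,z,\epsilon):=a(\bt,z,\epsilon)+\hat{u}_1(\bt,z,\epsilon)+\hat{u}_2(\bt,z,\epsilon)\in\mathbb{F}[[\epsilon]]$. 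To confirm that $\hat{u}=\sum_{m\geq 0}h_m(\bt,z)\epsilon^m/m!$ is a formal solution of (\ref{ICP_main0}) I repeat the argument of Theorem~\ref{teo2}: the common asymptotic expansion entails $\partial_\epsilon^\ell u_{p_1,p_2}(\bt,z,\epsilon) \to h_\ell(\bt,z)$ as $\epsilon\to 0$ in $\mathcal{E}_{p_1,p_2}$, uniformly on $(\mathcal{T}_1\cap D(0,h''))\times(\mathcal{T}_2\cap D(0,h''))\times H_{\beta'}$; differentiating (\ref{ICP_main0}) $m$ times in $\epsilon$ via Leibniz's rule and passing to the limit yields the recursion (\ref{e1368}) which, combined with the Taylor expansion of $f$ at $\epsilon=0$, is precisely what $\hat{u}$ must satisfy.

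Finally, to obtain $(k_1,k_2)$-summability I appeal to the last assertion of the multilevel RS theorem, applied to the prescribed chain in $\mathcal{U}_{k_1}$ together with the inclusion $\mathcal{E}_{p_1^y,p_2^y}\subseteq S_{\pi/k_2}\subseteq\bigcup_{0\le j\le 2y}\mathcal{E}_{p_1^j,p_2^j}$, which delivers directly that $\hat{u}$ is $(k_1,k_2)$-summable on $\mathcal{E}_{p_1^y,p_2^y}$ with sum $u_{p_1^y,p_2^y}$. The main obstacle is really just bookkeeping: correctly distributing the Cases of Theorem~\ref{teo1} into $\mathcal{U}_{k_1}$ versus $\mathcal{U}_{k_2}$ under the assumption $k_1>k_2$ (the opposite sign would exchange the two classes and, as emphasized in the remark following Theorem~\ref{teo1}, would prevent the chain hypothesis from being assembled, which is exactly why Theorem~\ref{teo2} cannot upgrade its Gevrey conclusion to multisummability).
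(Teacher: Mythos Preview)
Your proposal is correct and follows essentially the same approach as the paper's own proof: reduce to the multilevel Ramis--Sibuya theorem by distributing the cocycle estimates of Theorem~\ref{teo1} into $\mathcal{U}_{k_1}$ and $\mathcal{U}_{k_2}$, and then repeat the derivation-and-limit argument of Theorem~\ref{teo2} to check the formal series solves (\ref{ICP_main0}). In fact you supply a bit more detail than the paper does---your explicit computation that $\max\{e^{-M_p/|\epsilon|^{k_1}},e^{-M_p/|\epsilon|^{k_2}}\}=e^{-M_p/|\epsilon|^{k_2}}$ under $k_1>k_2$ and $|\epsilon|<1$ makes transparent why Case~3 lands in $\mathcal{U}_{k_2}$, and you state explicitly that the $(k_1,k_2)$-summability conclusion comes from the final clause of the multilevel RS theorem, whereas the paper's proof leaves that implicit.
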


\begin{proof}
Let $(u_{p_1,p_2}(\bt,z,\epsilon))_{\begin{subarray}{l}0\le p_1\le \varsigma_1-1\\0\le p_2\le \varsigma_2-1\end{subarray}}$ be the family constructed in Theorem~\ref{teo1}. In this case, we have
$$\emptyset\neq \mathcal{U}_{k_2}:=\{0,\ldots, \varsigma_1-1\}\times \{0,\ldots, \varsigma_2-1\}\setminus\mathcal{U}_{k_1},$$
and the opening of the sectors in the good covering are of opening slightly larger than $\pi/k_1$.

The function $G_{p_1,p_2}(\epsilon):=(t_1,t_2,z)\mapsto u_{p_1,p_2}(t_1,t_2,z,\epsilon)$ belongs to $\mathcal{O}(\mathcal{E}_{p_1,p_2},\mathbb{F})$. We consider $\{(p_1,p_2),(p'_1,p'_2)\}$ such that $(p_1,p_2)$ and $(p'_1,p'_2)$ belong to $\{0,\ldots,\varsigma_1-1\}\times \{0,\ldots,\varsigma_2-1\}$, and $\mathcal{E}_{p_1,p_2}$ and $\mathcal{E}_{p'_1,p'_2}$ are consecutive sectors in the good covering, so their intersection is not empty. In view of (\ref{exp_small_difference_u_p11}) and (\ref{exp_small_difference_u_p12}), one has that $\Delta_{(p_1,p_2),(p'_1,p'_2)}(\epsilon):=G_{p_1,p_2}(\epsilon)- G_{p'_1,p'_2}(\epsilon)$ satisfies exponentially flat bounds of certain Gevrey order, which is $k_1$ in the case that $\{(p_1,p_2),(p'_1,p'_2)\}\in\mathcal{U}_{k_1}$ and $k_2$ if $\{(p_1,p_2),(p'_1,p'_2)\}\in\mathcal{U}_{k_2}$. Multilevel-RS Theorem guarantees the existence of formal power series $\hat{G}(\epsilon),\hat{G}_1(\epsilon),\hat{G}_2(\epsilon)\in\mathbb{F}[[\epsilon]]$ such that 
$$\hat{G}(\epsilon)=a(\epsilon)+\hat{G}_1(\epsilon)+\hat{G}_{2}(\epsilon),$$ and the splitting
$$G_{p_1,p_2}(\epsilon)=a(\epsilon)+G^1_{p_1,p_2}(\epsilon)+G^2_{p_1,p_2}(\epsilon),$$
for some $a\in\mathbb{F}\{\epsilon\}$, such that for every $(p_1,p_2)\in\{0,\ldots,\varsigma_1-1\}\times \{0,\ldots,\varsigma_2-1\}$, one has that $G^1_{p_1,p_2}(\epsilon)$ admits $\hat{G}_{p_1,p_2}^1(\epsilon)$ as its Gevrey asymptotic expansion of order $k_1$, and $G^2_{p_1,p_2}(\epsilon)$ admits $\hat{G}_{p_1,p_2}^2(\epsilon)$ as its Gevrey asymptotic expansion of order $k_2$. We define
$$\hat{G}(\epsilon)=:\hat{u}(\bt,z,\epsilon)=\sum_{m\ge0}H_m(\bt,z)\frac{\epsilon^{m}}{m!}.$$
Following analogous arguments as in the second part of the proof of Theorem~\ref{teo2}, we conclude that $\hat{u}(\bt,z,\epsilon)$ is a formal solution of (\ref{ICP_main0}). 
\end{proof}


\begin{thebibliography}{99}
\bibitem{ba} W. Balser, \emph{From divergent power series to analytic functions. Theory and application of multisummable power series.} Lecture Notes in Mathematics, 1582. Springer-Verlag, Berlin, 1994. x+108 pp.
\bibitem{ba2} W. Balser, \emph{Formal power series and linear systems of meromorphic ordinary differential equations.} Universitext. Springer-Verlag, New York, 2000. xviii+299 pp.
\bibitem{ba3} W. Balser, \emph{Summability of power series that are formal solutions of partial differential equations with constant coefficients.} J. Math. Sci. (N. Y.) 124 (2004), no. 4, 5085--5097.
\bibitem{ba4} W. Balser, \emph{Multisummability of formal power series solutions of partial differential equations with constant coefficients.} J. Differential Equations 201 (2004), no. 1, 63--74.
\bibitem{balo} W. Balser, M. Loday-Richaud, \emph{Summability of solutions of the heat equation with inhomogeneous thermal conductivity in two variables.} Adv. Dyn. Syst. Appl. 4 (2009), no. 2, 159--177.
\bibitem{copata} O. Costin, H. Park, Y. Takei, \emph{Borel summability of the heat equation with variable coefficients.} J. Differential Equations 252 (2012), no. 4, 3076--3092.
\bibitem{cota} O. Costin, S. Tanveer, \emph{Existence and uniqueness for a class of nonlinear higher-order partial differential equations in the complex plane.} Comm. Pure Appl. Math. 53 (2000), no. 9, 1092--1117.
\bibitem{cota2} O. Costin, S. Tanveer, \emph{Short time existence and Borel summability in the Navier-Stokes equation in $\mathbb{R}^{3}$}, Comm. Partial Differential Equations 34 (2009), no. 7-9, 785--817.
\bibitem{hibino} M. Hibino, \emph{On the summability of divergent power series solutions for certain first-order linear PDEs.} Opuscula Math. 35 (2015), no. 5, 595--624. 
\bibitem{hssi} P. Hsieh, Y. Sibuya, \emph{Basic theory of ordinary differential equations}. Universitext. Springer-Verlag, New York, 1999.
\bibitem{iy}  Y. Ilyashenko, S. Yakovenko, Lectures on analytic differential equations. Graduate Studies in Mathematics, 86. American Mathematical Society, Providence, RI, 2008.
\bibitem{family1} A. Lastra, S. Malek, \emph{On parametric Gevrey asymptotics for some nonlinear initial value problems in two complex time variables,} preprint arxiv 2018.
\bibitem{lama} A. Lastra, S. Malek, \emph{On parametric Gevrey asymptotics for some nonlinear initial value Cauchy problems.} J. Differential Equations 259 (2015), no. 10, 5220--5270.
\bibitem{lama1} A. Lastra, S. Malek, \emph{On parametric multisummable formal solutions to some nonlinear initial value Cauchy problems.} Adv. Difference Equ. 2015, 2015:200, 78 pp.
\bibitem{lama2} A. Lastra, S. Malek, \emph{Multi-level Gevrey solutions of singularly perturbed linear partial differential equations.} Adv. Differential Equations 21 (2016), no. 7-8, 767--800.
\bibitem{lama3} A. Lastra, S. Malek, \emph{On multiscale Gevrey and q-Gevrey asymptotics for some linear q-difference differential initial value Cauchy problems.} J. Difference Equ. Appl. 23 (2017), no. 8, 1397--1457.
\bibitem{lamasa1} A. Lastra, S. Malek, J. Sanz, \emph{On Gevrey solutions of threefold singular nonlinear partial differential equations.} J. Differential Equations 255 (2013), no. 10, 3205--3232.
\bibitem{loday} M. Loday-Richaud, Divergent series, summability and resurgence. II. Simple and multiple summability. Lecture Notes in Mathematics, 2154. Springer, 2016. 
\bibitem{lumisch} D. Lutz, M. Miyake, R. Sch\"{a}fke, \emph{On the Borel summability of divergent solutions of the heat equation.} Nagoya Math. J. 154 (1999), 1--29.
\bibitem{ly1} G. Lysik, \emph{Borel summable solutions of the Burgers equation}. Ann. Polon. Math. 95 (2009), no. 2, 187--197.
\bibitem{ly2} G. Lysik, \emph{The Borel summable solutions of heat operators on a real analytic manifold.} J. Math. Anal. Appl. 410 (2014), no. 1, 117--123.
\bibitem{ma1} S. Malek, \emph{On the summability of formal solutions for doubly singular nonlinear partial differential equations.} J. Dyn. Control Syst. 18 (2012), no. 1, 45--82.
\bibitem{ou} S. Ouchi, \emph{Multisummability of formal power series solutions of nonlinear partial differential equations in complex domains}. Asymptot. Anal. 47 (2006), no. 3-4, 187--225.
\bibitem{taya} H. Tahara, H. Yamazawa, \emph{Multisummability of formal solutions to the Cauchy problem for some linear partial differential equations}, Journal of Differential equations, Volume 255, Issue 10, 15 November 2013, pages 3592--3637.
\bibitem{takei} Y. Takei, \emph{On the multisummability of WKB solutions of certain singularly perturbed linear ordinary differential equations}, Opuscula Math. 35 no. 5 (2015), 775--802. 
\end{thebibliography}
\end{document}